\newcommand{\textcyr}[1]{{\fontencoding{OT2}\fontfamily{wncyr}\fontseries{m}\fontshape{n}\selectfont #1}}
\newcommand{\Sha}{{\mbox{\textcyr{Sh}}}}
\newtheoremstyle{theorem}{8pt\vfill}{8pt\vfill}{\itshape}{}{\bfseries}{.}{.5em}{}
\newtheoremstyle{paragraph}{8pt\vfill}{8pt\vfill}{}{}{\bfseries}{}{.5em}{}
\theoremstyle{theorem}                                       
\newtheorem{thm}{Theorem}[section]
\newtheorem{prop}[thm]{Proposition}
\newtheorem{cor}[thm]{Corollary}
\newtheorem{lemma}[thm]{Lemma}
\theoremstyle{definition}
\newtheorem{eg}[thm]{Example}
\newtheorem{rem}[thm]{Remark}
\theoremstyle{paragraph}                                     
\newtheorem{pg}[thm]{\!\!}
\newcommand{\norm}[1]{\left| #1 \right|}
\newcommand{\lrquot}[3]{#1 \,\backslash\, #2 \,/\, #3 }
\newcommand{\lquot}[2]{#1  \,\backslash\, #2 }
\newcommand{\rquot}[2]{#1  \, / \, #2 }
\newcommand{\arincl}[1]{\ar@{ >->}@<-0,0ex>#1}    
\def \smallmat #1 #2 #3 #4 {{\scriptstyle \begin{pmatrix} {#1} & {#2} \\ {#3} & {#4} \end{pmatrix}}}
\def \tinymat #1 #2 #3 #4 {\bigl( \begin{smallmatrix} {#1} & {#2} \\ {#3} & {#4} \end{smallmatrix} \bigr)}
\def\eqref #1{\textup{(}\ref{#1}\textup{)}}
\newcommand{\ses}[3]{\xymatrix{0\ar[r]&{#1}\ar[r]&{#2}\ar[r]&{#3}\ar[r]&0}}                
\def\blanc{\mspace{4mu}\cdot\mspace{4mu}}
\def\cT{\mathcal{T}}
\def\cO{\mathcal{O}}
\def\cV{\mathcal{V}}
\def\cA{\mathcal{A}}
\def\cH{\mathcal{H}}
\def\cF{\mathcal{F}}
\def\cL{\mathcal{L}}
\def\cI{\mathcal{J}}
\def\cM{\mathcal{M}}
\def\cK{\mathcal{K}}
\def\cN{\mathcal{N}}
\def\cX{\mathcal{X}}
\def\cU{\mathcal{U}}
\def\cN{\mathcal{N}}
\def\cE{\mathcal{E}}
\def\cG{\mathcal{G}}
\def\cC{\mathcal{C}}
\def\cB{\mathcal{B}}
\def\cE{\mathcal{E}}
\def\cR{\mathcal{R}}
\def\ZZ{\mathbb{Z}}
\def\QQ{\mathbb{Q}}
\def\CC{\mathbb{C}}
\def\FF{\mathbb{F}}
\def\AA{\mathbb{A}}
\def\PP{\mathbb{P}}
\DeclareMathOperator{\Hom}{Hom}
\DeclareMathOperator{\Ext}{Ext}
\DeclareMathOperator{\Stab}{Stab}
\DeclareMathOperator{\Gal}{Gal}
\DeclareMathOperator{\tr}{tr}
\DeclareMathOperator{\PGL}{PGL}
\DeclareMathOperator{\SL}{SL}
\DeclareMathOperator{\Bun}{Bun}
\DeclareMathOperator{\Pic}{Pic}
\DeclareMathOperator{\Cl}{Cl}
\DeclareMathOperator{\Vertex}{Vert\,}
\DeclareMathOperator{\Edge}{Edge\,}
\def\PBun{\PP\!\Bun}
\def\PBundec{\PP\!\Bun_2^{\rm dec}}
\def\PBuntr{\PP\!\Bun_2^{\rm tr}}
\def\PBungi{\PP\!\Bun_2^{\rm gi}}
\def\tor{{\rm tor}}
\def\i{{\rm i}}
\def\nsimeq{\simeq\hspace{-10.5pt}/\hspace{5.5pt}}
\def\nequiv{\equiv\hspace{-11pt}/\hspace{5.5pt}}
\def\longhookrightarrow{\lhook\joinrel\longrightarrow}
\renewcommand{\date}[1]{\gdef\DD{#1}}\newcommand{\DD}{}
\title{Automorphic forms for elliptic function fields}
\author{Oliver Lorscheid}
\address{The City College of New York, Math. Dept., 160 Convent Ave., New York NY 10031, USA}
\email{olorscheid@ccny.cuny.edu}
\begin{document}

\begin{abstract}
 Let $F$ be the function field of an elliptic curve $X$ over $\FF_q$. In this paper, we calculate explicit formulas for unramified Hecke operators acting on automorphic forms over $F$.  We determine these formulas in the language of the graph of an Hecke operator, for which we use its interpretation in terms of $\PP^1$-bundles on $X$. This allows a purely geometric approach, which involves, amongst others, a classification of the $\PP^1$-bundles on $X$.

 We apply the computed formulas to calculate the dimension of the space of unramified cusp forms and the support of a cusp form. We show that a cuspidal Hecke eigenform does not vanish in the trivial $\PP^1$-bundle. Further, we determine the space of unramified $F'$-toroidal automorphic forms where $F'$ is the quadratic constant field extension of $F$. It does not contain non-trivial cusp forms. An investigation of zeros of certain Hecke $L$-series leads to the conclusion that the space of unramified toroidal automorphic forms is spanned by the Eisenstein series $E(\blanc,s)$ where $s+1/2$ is a zero of the zeta function of $X$---with one possible exception in the case that $q$ is even and the class number $h$ equals $q+1$.
\end{abstract}


\maketitle

\tableofcontents


\section*{Introduction}

\noindent
The space of automorphic forms over $\QQ$ is an extensively studied object and many explicit results about its structure are known, for instance, one knows the dimensions of certain subspaces like spaces of cusp forms with a fixed weight and ramification. The function field analog of the rational numbers are rational function fields $\FF_q[T]$. The space of automorphic forms is also in these cases well-studied. A major tool of investigation are Hecke operators, for which explicit formulas are available for both $\QQ$ and $\FF_q[T]$. The action of Hecke operators is less known for other global fields than $\QQ$ and $\FF_q[T]$. In particular, explicit formulas for the action of Hecke operators over function fields of genus $1$ or higher are not available in literature---with one exception, which are elliptic function fields with odd class number. We explain below how to extract these explicit formulas from results in literature and why this method works only for odd class number. We will see that the only satisfying cases are elliptic function fields with class number $1$, which includes, up to isomorphism, only three fields (cf.\ Example \ref{eg_class_number_1}).

In this paper we shall work out explicit equations for Hecke operators over any elliptic function field. This will be done in terms of the graph of an Hecke operator, which is a tool introduced exactly for this purpose (cf.\ \cite{Lorscheid2}). We extract formulas from these graphs and employ them to calculate the space of cusp forms, the space of toroidal automorphic forms as well as some other spaces. Note that in this exposition as well as throughout the paper, we will restrict ourselves to unramified automorphic forms and unramified Hecke operators exclusively, and we agree to suppress the attribute ``unramified'' from our terminology.

We continue with explaining what can be deduced from literature about Hecke operators for elliptic function fields. Let the $F$ be the function field of an elliptic curve $X$ over a finite field $\FF_q$. Let $x$ be a place of $F$. We denote by $F_x$ the completion of $F$ at $x$, by $\cO_x$ its integers, by $\pi_x\in\cO_x$ a uniformizer and by $q_x$ the cardinality of the residue field $\rquot{\cO_x}{(\pi_x)}\simeq\FF_{q_x}$. The Bruhat-Tits tree $\cT_x$ of $F_x$ is a graph with vertex set $\rquot{\PGL_2(F_x)}{\PGL_2(\cO_x)}$. There is an edge between two cosets $[g]$ and $[g']$ if and only if $[g']$ contains $g\tinymat 1 {} {} {\pi_x} $ or $g\tinymat {\pi_x} b {} 1 $ for some $b\in\FF_{q_x}$. Note that this condition is symmetric in $g$ and $g'$, so $\cT_x$ is a geometric graph. In fact, $\cT_x$ is a $(q_x+1)$-regular tree.

We define the action of the local Hecke operator $T_x$ on the space of complex valued functions $f$ on the vertices of $\cT_x$ by the formula
$$ T_x(f)(v) \quad = \quad \sum_{v'\text{ adjacent to }v} f(v')\;. $$
Let $\cO^x_F\subset F$ be the Dedekind ring of all elements $a\in F$ with $\norm a_y\leq 1$ for all places $y\neq x$. Then $\Gamma=\PGL_2(\cO_F^x)$ acts on $\cT_x$ by left multiplication, which induces an action of $\Gamma$ on the functions on $\Vertex\cT_x$. This action commutes with the action of $T_x$.

If the class number $h$ of $F$ as well as the degree of $x$ is odd, then the strong approximation property of $\SL_2$ implies that the inclusion of $\PGL_2(F_x)$ into $\PGL_2(\AA)$ induces a bijection
$$ \lquot{\Gamma}{\Vertex\cT_x} \quad\stackrel\sim\longrightarrow\quad \lrquot{\PGL_2(F)}{\PGL_2(\AA)}{\PGL_2(\cO_\AA)} $$
where $\cO_\AA$ is the maximal compact subring of the adeles $\AA$ of $F$ (cf.\ \cite[Prop.\ 3.8]{Lorscheid2}. We denote the quotient on the right hand side by $\cX$. An automorphic form is a function $f$ on $\cX$, or, equivalently, on $\lquot{\Gamma}{\Vertex\cT_x}$, which satisfies, as a ($\Gamma$-invariant) function on $\Vertex\cT_x$, that $\{T_x^i(f)\}_{i\geq0}$ spans a finite-dimensional complex vector space. Note that the local Hecke operator $T_x$ corresponds to a (global) Hecke operator $\Phi_x$. To be more precise, the bijection $\lquot{\Gamma}{\Vertex\cT_x}\to\cX$ induces an isomorphism between the function spaces on these sets, which is equivariant with respect to the operators $T_x$ and $\Phi_x$.

Thus we can regard $f$ as a function on the quotient $\lquot{\Gamma}{\Vertex\cT_x}$. Shuzo Takahashi calculates this quotient for places $x$ of degree $1$ in \cite{Takahashi}. This means that he describes representatives in $\PGL_2(F_x)$ for the double quotient $\lrquot{\Gamma}{\PGL_2(F_x)}{\PGL_2(\cO_x)}$. The full subgraph of the classes of these representatives in $\Vertex\cT_x$ is a tree, and this tree is isomorphic to the quotient graph $\lquot{\Gamma}{\cT_x}$. 

We illustrate the quotient graph together with some matrix representatives of vertices in Figure \ref{figure_takahashi}. The element $b$ varies through all elements in $\FF_q$. Whether $\tinymat {\pi_x^2} b {} 1 $ represents a vertex to the right or to the left of $\tinymat \pi_x 1 {} 1 $ depends on whether the Weierstrass polynomial $P(\underline X,\underline Y)$ for $X$ has a root in $\underline X$ for $\underline Y=b$ or not.

\begin{figure}[htb]
 \begin{center}
  \includegraphics{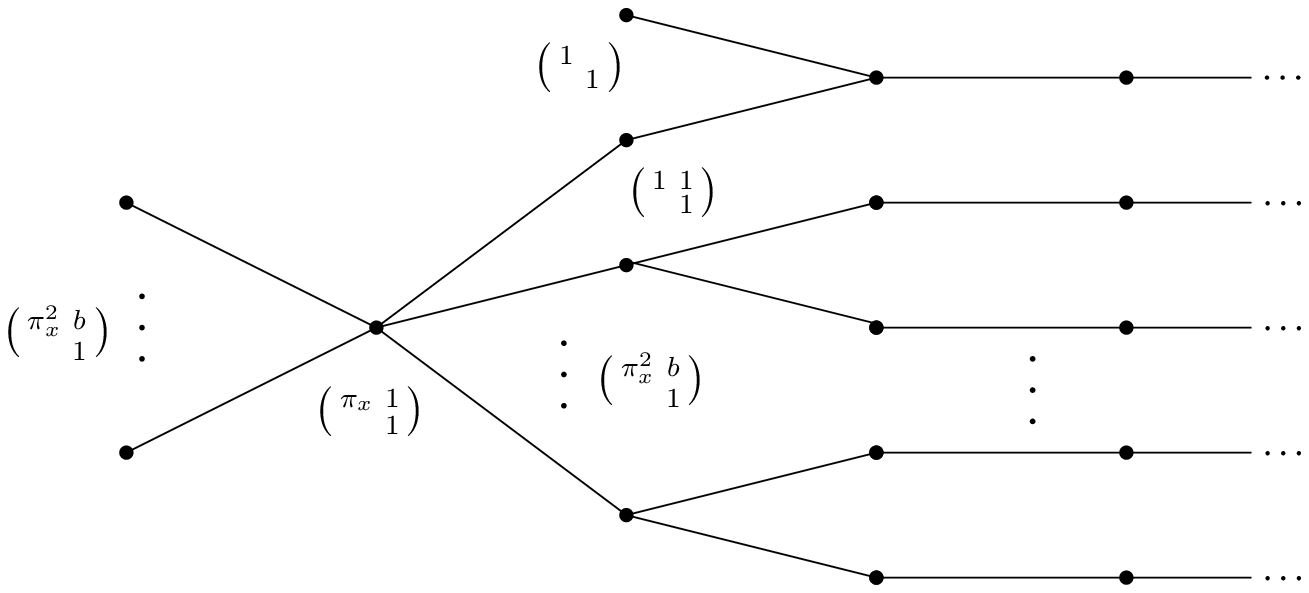} 
  \caption{The quotient graph $\lquot{\Gamma}{\cT_x}$}
  \label{figure_takahashi}
 \end{center} 
\end{figure}

This quotient graph, in turn, is isomorphic to the graph of $\Phi_x$ (when weights are suppressed), which is illustrated in Figure \ref{ell_h_odd_graph} (see section \ref{section_examples}). Takahashi further determines the stabilizers of $\Gamma$ acting $\Vertex\cT_x$, which allows to compute the action of $T_x$ on a function on $\lquot{\Gamma}{\Vertex\cT_x}\simeq\cX$ by the formula
$$ T_x(f)(v) \quad = \quad \sum_{\substack{\text{edges }e\text{ with origin }v\\\text{and terminus }v'}} [\Stab_\Gamma(v):\Stab_\Gamma(e)] \cdot f(v')\;. $$

An automorphic form that is an simultaneous eigenfunction for all Hecke operators is determined by the set of its eigenvalues. In the case that the class number is $1$, the automorphic form is already determined by the eigenvalue with respect to $\Phi_x$ respective $T_x$, up to finitely many exceptions. For computations that profit from this point of view, see \cite{Cornelissen-Lorscheid}. If the class number is odd, but not $1$, then the eigenvalue with respect to $T_x$ does not suffice to determine an automorphic form, but we need to consider the action of other Hecke operators as well. This leads to the difficulty to identify the representatives of $\cX$ in $\PGL_2(F_x)$ with the representatives in $\PGL_2(F_y)$ where $y$ is a different place of degree $1$. From the viewpoint of Takahashi's paper, this seems to be a difficult problem. Even worse, if the class number is even, the correspondence between the local and the global situation breaks down and we are not able to draw any of the above conclusions.

For this reason, the notion of the graph of an Hecke operator was introduced in \cite{Lorscheid2}, which relates to automorphic forms as functions on $\cX$ directly, without making use of the tree $\cT_x$. The interpretation of $\cX$ as the set of isomorphism classes of $\PP^1$-bundles on $X$ allows us to apply geometric methods, which prove to be very efficient. This gives access to a simultaneous consideration of all Hecke operators, for both odd and even class number. Note that we will restrict ourselves to the geometric viewpoint for the rest of this paper, in contrast to the above exposition. More details on the relation between the geometric and the arithmetic setting can be found in \cite[section 5]{Lorscheid2}.

The paper is organized into two parts. In part \ref{part_graphs}, we determine the graphs $\cG_x$ of the Hecke operators $\Phi_x$ for degree $1$ places $x$. In section \ref{section_graphs}, we recall the definition of the graphs $\cG_x$ in the more general setting of an arbitrary global function field. We review the structure theory for $\cG_x$ as developed in \cite{Lorscheid2}. In particular, we explain that $\cG_x$ decomposes into finitely many cusps, which are subgraphs that have a simple description, and into a finite subgraph, which is called the nucleus of $\cG_x$. In section \ref{section_vertices}, we determine the vertices of the nucleus as a consequence of Atiyah's classification of vector bundles on elliptic curves. In section \ref{section_edges}, we determine the edges in the nucleus by extensive use of the methods from \cite{Lorscheid2}. In section \ref{section_examples}, we illustrate examples of graphs of Hecke operators. 

In part \ref{part_applications}, we apply the knowledge about the graphs $\cG_x$ to explicit calculations with automorphic forms. In section \ref{section_automorphic}, we review the notion of an automorphic form as a function on the graph. We write out explicit formulas for an Hecke operator $\Phi_x$ acting on an eigenfunction. In section \ref{section_cusp}, we calculate the space of cusp forms. This means that we determine its dimension and the (maximal) support of a cusp form. We show that the space of cusp forms is a $0$-eigenspace for every $\Phi_x$ where $x$ is a place of odd degree. We prove further that a cuspidal Hecke eigenform does not vanish in the trivial $\PP^1$-bundle. In section \ref{section_toroidal}, we calculate the space of $F'$-toroidal automorphic forms where $F'$ is the quadratic constant field extension of $F$. In particular, we see that this space contains co non-trivial cusp form. We show that the space of toroidal automorphic forms is generated by a single Eisenstein series $E(\blanc,s)$ where $s+1/2$ is a zero of the zeta function of $F$---with a possible exception in the case that the characteristic of $F$ is $2$ and the class number $h$ of $F$ equals $q+1$ where the space of toroidal automorphic forms might be $2$-dimensional.

\medskip
{\bf Acknowledgements:} This paper is extracted from my thesis \cite{Lorscheid-thesis}. I would like to thank Gunther Cornelissen for his advice during my graduate studies.

\bigskip


\part{Graphs of Hecke operators for elliptic function fields}
\label{part_graphs}

\bigskip


\section{Reminder on graphs of Hecke operators}
\label{section_graphs}

\noindent
In this section, we recall the definition of the graphs $\cG_x$ of the Hecke operators $\Phi_x$ as introduced in \cite{Lorscheid2}. These graphs encode the action of certain unramified Hecke operators $\Phi_x$ which act on the space of automorphic forms for $\PGL_2$ over a global function field $F$. The connection to the Hecke operators $\Phi_x$ will be explained in section \ref{section_automorphic}. We concentrate in this resume on the geometric point of view. For the translation into adelic language, see section 5 in \cite{Lorscheid2}. 

\begin{pg}\label{def_hecke_graphs}
 Let $q$ be a prime power and $X$ a smooth projective geometrically irreducible curve over $\FF_q$ with function field $F$. Let $\norm X$ be the set of closed points of $X$, which we identify with the set of places of $F$. 
 
 Let $X'=X\otimes\FF_{q^2}$ be the constant field extension of $X$ to $\FF_{q^2}$ and $\overline X=X\otimes\overline\FF_q$ the constant field extension of $X$ to the algebraic closure $\overline\FF_q$ of $\FF_q$. For $Y$ equal to $X$, $X'$ or $\overline X$, we denote by $\Pic Y$ the Picard group of  $Y$ and by $\Bun_2 Y$ the set of isomorphism classes of rank $2$ bundles over $Y$. The Picard group $\Pic Y$ acts on $\Bun_2 Y$ via the tensor product. We denote the quotient by $\PBun_2 Y$, which is the same as the set of isomorphism classes of $\PP^1$-bundles over $Y$. We write $[\cM]\in\PBun_2 Y$ if the class $[\cM]$ is represented by the rank $2$ bundle $\cM$, and $\cM\sim\cM'$ if $[\cM]=[\cM']$. We identify $\cM$ with the associated locally free sheaf of rank $2$. 

 Two exact sequences of sheaves
 $$ 0\to\cF_1\to\cF\to\cF'_1\to0 \hspace{1cm} \textrm{and} \hspace{1cm} 0\to\cF_2\to\cF\to\cF'_2\to0\;, $$
 are \emph{isomorphic with fixed $\cF$} if there are isomorphisms $\cF_1\to\cF_2$ and $\cF'_1\to\cF'_2$ such that
 $$ \xymatrix{0\ar[r]&{\cF_1}\ar[r]\ar[d]^\simeq&{\cF}\ar[r]\ar@{=}[d]&{\cF'_1}\ar[r]\ar[d]^\simeq&0\\ 0\ar[r]&{\cF_2}\ar[r]&{\cF}\ar[r]&{\cF'_2}\ar[r]&0} $$
 commutes.

 Fix a place $x$. Let $\cK_x$ be the torsion sheaf that is supported at $x$ and has stalk $\kappa_x$ at $x$, where $\kappa_x$ is the residue field at $x$. Fix a representative $\cM$ of $[\cM]\in\PBun_2 X$. We define $m_x([\cM],[\cM'])$ as the number of isomorphism classes of exact sequences
 $$ \ses{\cM''}{\cM}{\cK_x} $$
 with fixed $\cM$ and with $\cM''\sim\cM'$. This number is independent of the choice of representative $\cM$ (cf.\ \cite[par.\ 5.4]{Lorscheid2}).

 For a $\PP^1$-bundle $v\in\PBun_2X$ we define
 $$ \cU_x(v) \ = \ \left\{ (v,v',m) \ \left|\ m=m_x(v,v')\neq0 \right.\right\} \;, $$
 and call the occurring $v'$ the {\it $\Phi_x$-neighbours of $v$}, and $m_x(v,v')$ their {\it multiplicity}.

 We define the graph $\cG_x$ by 
 \begin{align*} 
   \Vertex \cG_x \ &= \ \PBun_2 X\quad \textrm{ and} \\
   \Edge \cG_x \ &= \ \coprod_{v\in\PBun_2X} \cU_x(v) 
 \end{align*}
 where an edges from $v$ to $v'$ comes together with a weight $m=m_x(v,v')$.

 We list some facts about the graphs $\cG_x$. By definition, the weight of an edge is a positive integer, and there is at most one edge between two vertices. Every edge $(v,v',m)$ has an inverse edge, i.e.\ there is an edge $(v',v,m')$ in $\cG_x$ for some positive integer $m'$, which in general differs from $m$ (cf.\ \cite[par.\ 3.2]{Lorscheid2}). The weights of all edges $(v,v',m)$ with origin $v$ sum up to $q_x+1$ where $q_x=q^{\deg x}$ is the cardinality of $\kappa_x$ (cf.\ \cite[Prop. 2.3]{Lorscheid2}). In particular, $\cG_x$ is a locally finite graph. We illustrate a single edge $(v,v',m)$ by
 $$ \includegraphics{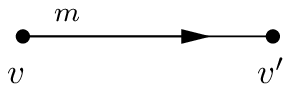} $$
 and a pair of inverse edges $(v,v',m)$ and $(v',v,m')$ by 
 $$ \includegraphics{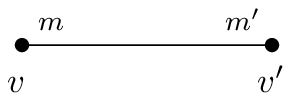} $$
 (note that in all examples of the present paper $v\neq v'$ for an edge $(v,v',m)$ of $\cG_x$). We encourage the reader to have a glance at the examples in section \ref{section_examples}.
\end{pg}

\begin{pg} \label{char_PBundec}\label{char_PBuntr}
 More details on the facts listed in this paragraph can be found in sections 6 and 7 of \cite{Lorscheid2}. A line subbundle of a rank $2$ bundle $\cM$ is a morphism $\cL\to\cM$ where $\cL$ is a line bundle such that the quotient $\rquot{\cM}{\cL}$ is torsion free and thus also a line bundle. We define $\delta(\cL,\cM)$ as $2\deg\cL-\deg\cM$ and $\delta(\cM)$ as the supremum of $\delta(\cL,\cM)$ over all subbundles $\cL\to\cM$. If $\cM\sim\cM'$, then $\delta(\cM)=\delta(\cM')$, so $\delta([\cM])=\delta(\cM)$ is well-defined for $[\cM]\in\PBun_2 X$. For all rank $2$ bundles $\cM$ there is a line subbundle $\cL\to\cM$ such that $\delta(\cM)=\delta(\cL,\cM)$. We call such a line subbundle a \emph{maximal subbundle of $\cM$}. For every $v\in\Vertex\cG_x=\PBun_2 X$, we have $\delta(v)\geq -2g$ where $g$ is the genus of $X$.

 The set $\PBun_2 X$ is the disjoint union of the following three subsets: the set $\PBundec X$ of classes that are represented by rank $2$ bundles that decompose into a direct sum of two line bundles, the set $\PBuntr X$ of classes that are represented by indecomposable rank $2$ bundles that decompose over $X'$ into the sum of two line bundles and the set $\PBungi X$ of classes that are represented by geometrically indecomposable rank $2$ bundles. 

 Classes in $\PBundec X$ can be represented by a rank $2$ bundle of the form $\cO_X\oplus\cL_D$ where $\cO_X$ is the structure sheaf of $X$ and $\cL_D$ is the line bundle associated to the divisor class $D$ in the divisor class group $\Cl X$ (cf.\ \cite[Prop. II.6.13]{Hartshorne}). We denote the corresponding class in $\PBundec X$ by $c_D$. We have that $c_D=c_{D'}$ if and only if $D= D'$ or $D=-D'$ in $\Cl X$ (cf.\ \cite[Prop.\ 6.3]{Lorscheid2}). 

 Classes in $\PBuntr X$ are represented by the trace of a line bundle $\cL$ over $X'$. A line bundle over $X'$ corresponds to a divisor class $D\in\Cl X'$, and we denote the class in $\PBun_2 X$ represented by the trace of $\cL_D$ by $t_D$. Note that $t_D=c_0$ if and only if $D\in\Cl X\subset\Cl X'$, and otherwise $t_D\in\PBuntr X$. We have $t_D=t_{D'}$ if and only if $D= D'$ or $D=-D'$ in $\rquot{\Cl X'}{\Cl X}$ (cf.\ \cite[Prop.\ 6.4]{Lorscheid2}). The integer $\delta(v)$ is even and negative for $v\in\PBuntr X$. 

 The set $\PBungi X$ depends heavier on the arithmetic of the given curve $X$. For $v\in\PBungi X$, we have that $-2g\leq\delta(v)\leq2g-2$. Consequently, $\PBungi X$ is empty if the genus of $X$ is $0$. For genus $1$, we will determine $\PBungi X$ in Theorem \ref{ell_PBungi}.
\end{pg}

\begin{pg}
 We recall the definitions of the nucleus and the cusps of $\cG_x$. More details on the  following can be found in section 8 of \cite{Lorscheid2}. Let $m_X=\max\{0,2g-2\}$. The \emph{nucleus $\cN_x$ of $\cG_x$} is the full subgraph of $\cG_x$ whose vertex set consists of all those $v\in\Vertex\cG_x$ with $\delta(v)\leq m_X+\deg x$. For every $D\in\Cl X$, the \emph{cusp $\cC_x(D)$} is defined as the full subgraph of $\cG_x$ whose vertex set consists of all vertices of the form $c_{D'}$ with $D'\equiv D\pmod{\langle x\rangle}$ and $\deg D'>m_X$. In particular, a cusp depends only on the class $[D]\in\Cl X/\langle x\rangle$. These classes are represented by $D\in\Cl X$ with $m_X<\deg D\leq m_X+\deg x$; consequently there are $h\deg x$ cusps where $h=\#\Cl^0 X$ is the class number of $F$. If $D$ is such a representative, then the cusp $\cC_x(D)$ looks like
 $$ \includegraphics{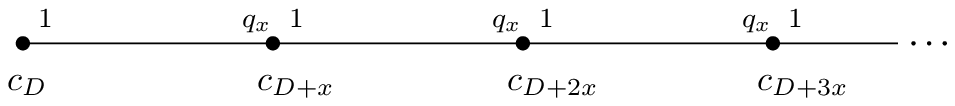} $$
 The graph $\cG_x$ is the union of the nucleus and the cusps. The union of edges is disjoint. The vertex sets of different cusps are disjoint and the intersection of $\Vertex\cN_x$ with $\Vertex\cC_x(D)$ equals $\{c_D\}$ if $m_X<\deg D\leq m_X+\deg x$. Note that both $\PBuntr X$ and $\PBungi X$ are contained in the vertex set of $\cN_x$. 

 The connected components of $\cG_x$ stay in bijection with the $2$-torsion elements of $\Pic X$. In particular, $\cG_x$ is connected if and only if the class number of $F$ is odd.
\end{pg}


\section{Vertices}
\label{section_vertices}

\noindent
 In this section, we determine the set $\PBun_2 X$ of all isomorphism classes of $\PP^1$-bundles for a curve $X$ of genus $1$. In paragraph \ref{char_PBundec}, we already described the subsets $\PBundec X$ and $\PBuntr X$. This reduces the problem to classifying the classes in $\PBungi X$.

\begin{pg}
 \label{ell_basics}
  From now on, let $X$ be a curve of genus $1$ over $\FF_q$ with function field $F$, $\Cl X$ the divisor class group and $h$ the class number. The canonical sheaf $\omega_X$ is isomorphic to the structure sheaf $\cO_X$. The map $X(\FF_q)\to\Cl^1 X$, which is defined by considering an $\FF_q$-rational point as a prime divisor of degree $1$, is a bijection. We identify these sets. The choice of an $x_0\in X(\FF_q)$ defines the bijection
 $$ \begin{array}{ccc}
     X(\FF_q)&\stackrel\sim\longrightarrow&\Cl^0 X \;.\\
        x    &\longmapsto    & x-x_0
    \end{array} $$	  
 So $X(\FF_q)$ inherits a group structure and $X$ becomes an elliptic curve. 
\end{pg}

\begin{pg} 
 \label{rr_for_ell_curves}
 The Riemann-Roch theorem reduces to $\dim_{\FF_q}\!\Gamma(\cL)-\dim_{\FF_q}\!\Gamma(\cL^{-1})=\deg \cL$. Since $\Gamma(\cL)$ is non-zero if and only if $\cL$ is associated to an effective divisor (cf.\ \cite[Prop. II.7.7(a)]{Hartshorne}), we obtain:
 $$ \dim_{\FF_q}\!\Gamma(\cL) \ = \ \left\{ \begin{array}{ll} 0&\text{if }\deg\cL\leq0\text{ and }\cL\nsimeq\cO_X,\\
                                                     1&\text{if }\cL\simeq\cO_X,\\
							 \deg\cL&\text{if }\deg\cL>0.  \end{array} \right. $$
 
 By Serre duality, $\Ext^1(\cO_X,\cO_X)\simeq \Hom(\cO_X,\cO_X)\simeq\Gamma(\cO_X)$ is one-dimensional. The multiplicative group $\FF_q^\times$ acts on $\Ext^1(\cO_X,\cO_X)$, and this action preserves the isomorphism type of the rank $2$ bundle $\cM$ that is determined by an extension of $\cO_X$ by itself (cf.\ \cite[par.\ 7.3]{Lorscheid2}). Consequently, there is only one rank $2$ bundle $\cM_0$, up to isomorphism, that is a non-trivial extension of $\cO_X$ by itself. Since $\delta(\cO_X,\cM_0)=0$, \cite[Lemma 7.6]{Lorscheid2} implies that $\delta(\cM_0)=0$, and since $\cM_0\nsimeq\cO_X\oplus\cO_X$, the vector bundle $\cM_0$ is indecomposable. Since for $v\in\PBuntr X$, we have $\delta(v)<0$, it follows that $[\cM_0]\in\PBungi X$. We denote this class by $s_0$.

 Let $x$ be a place of degree $1$ and let $\cL_x$ denote the line bundle associated to the divisor class $[x]\in\Cl X$. The $\FF_q$-vector space $\Ext^1(\cO_X,\cL_x)\simeq\Hom(\cO_X,\cL_x)\simeq\Gamma(\cL_x)$ is also one-dimensional, and the non-trivial extensions define a rank $2$ bundle $\cM_x$. In this case, $\delta(\cM_x)=\delta(\cO_X,\cM_x)=\deg\cO_X-\deg\cL_x=-1$ and $\cO_X\to\cM_x$ is a maximal subbundle. Indeed, if there was a subbundle $\cL\to\cM_x$ of positive degree, \cite[Lemma 7.6]{Lorscheid2} would imply that $\deg\cL=1$ and $\cM_x\simeq\cO_X\oplus\cL$. But $\cL\simeq\det(\cO_X\oplus\cL)\simeq\det\cM_x\simeq\cL_x$, thus we contradict the assumption that $\cM_x$ is a non-trivial extension of $\cL_x$ by $\cO_X$. By the considerations of paragraph \ref{char_PBundec} on the values of $\delta$ we know that $[\cM_x]\in\PBungi X$. We denote this class by $s_x$.
\end{pg}

\begin{rem}
 \label{rem_D_vs_[D]}
 The notations for the vector bundles $\cL_x$ and $\cM_x$ of the previous paragraph is the same as the notation for the stalk of some vector bundles $\cL$ respective $\cM$ at $x$. To avoid confusion, we will reserve  the notations $\cL_x$ and $\cM_x$ strictly for the vector bundles as defined in the last paragraph throughout the whole paper.
\end{rem}

\begin{thm}
 \label{ell_PBungi}
 $$\PBungi X \ = \ \bigl\{\ s_x \ \bigl| \ x\in\Cl^1 X\ \bigr.\bigr\} \ \amalg \ \bigl\{ \ s_0 \ \bigr\} \;, $$
 and $\ s_x = s_y\ $ if and only if $\ (x-y) \in 2\Cl^0 X$. 
\end{thm}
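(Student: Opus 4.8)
The plan is to combine the structure theory of $\PBun_2 X$ recalled in \ref{char_PBundec} with Atiyah's classification of indecomposable bundles on $\overline X$. The inclusion ``$\supseteq$'' of the asserted identity is already contained in \ref{rr_for_ell_curves}, where $s_0 = [\cM_0]$ and each $s_x = [\cM_x]$ (for $x$ a place of degree $1$, equivalently $x \in \Cl^1 X$ by \ref{ell_basics}) are shown to lie in $\PBungi X$; and the two families are disjoint because $\delta$ is a class invariant with $\delta(s_0) = 0$ and $\delta(s_x) = -1$. So what remains is the reverse inclusion together with the criterion for $s_x = s_y$.

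For the reverse inclusion, I would start from $v \in \PBungi X$ with a representative $\cM$, and use that $\Cl^1 X \neq \emptyset$ to get line bundles of every degree, so that after tensoring $\cM$ by a suitable one I may assume $\deg \cM \in \{0,1\}$ without changing $v$. If $\deg \cM = 1$, then $\delta(v)$ is odd and the bound $-2g \le \delta(v) \le 2g-2$ from \ref{char_PBundec} forces $\delta(v) = -1$; hence a maximal subbundle $\cL \to \cM$ has degree $0$, and after tensoring once more by $\cL^{-1}$ we may assume $\cM$ is a non-split extension $0 \to \cO_X \to \cM \to \cL_x \to 0$ with $x \in \Cl^1 X$ (non-split because $\cM$, being geometrically indecomposable, is indecomposable). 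The relevant $\Ext^1$-group is one-dimensional over $\FF_q$ and its nonzero classes all yield the same bundle up to isomorphism by the computation in \ref{rr_for_ell_curves}, so $\cM \cong \cM_x$ and $v = s_x$.

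The remaining case $\deg \cM = 0$ is where Atiyah's classification is used, and I expect the exclusion of the a priori possible value $\delta(v) = -2$ to be the main obstacle. Here $\cM \otimes \overline\FF_q$ is an indecomposable rank-$2$ bundle of degree $0$ on $\overline X$, hence by Atiyah of the form $F_2 \otimes \overline L$ with $\overline L$ of degree $0$, where $F_2$ is the non-split self-extension of $\cO_{\overline X}$. From the sequence $0 \to \cO_{\overline X} \to F_2 \to \cO_{\overline X} \to 0$ and $h^0(F_2) = 1$ one checks that $F_2 \otimes \overline L$ has exactly one line subbundle of degree $0$, namely $\overline L$; being unique it is Galois-stable and descends to a degree-$0$ line subbundle $\cL \to \cM$, so in particular $\delta(v) \ge 0$ and hence $\delta(v) = 0$. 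Since line bundles on $X$ are determined by their pullback to $\overline X$, the quotient $\cM/\cL$, which base-changes to $\overline L$, must be isomorphic to $\cL$; thus $\cM$ is a self-extension of $\cL$, non-split since $\cM$ is geometrically indecomposable, so $\cM \cong \cM_0 \otimes \cL$ and $v = s_0$. (Alternatively, $\delta(v) = -2$ could be excluded by noting that such an $\cM$ would be stable, hence simple with endomorphism algebra a finite field $\FF_q$ or $\FF_{q^2}$; the first is impossible over $\overline X$ by Atiyah, and the second would make $\cM$ a trace bundle, contradicting $v \notin \PBuntr X$.)

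For the criterion, necessity is a determinant count: $s_x = s_y$ means $\cM_x \cong \cM_y \otimes P$ for a line bundle $P$, so taking $\det$ gives $\cL_x \cong \cL_y \otimes P^{\otimes 2}$, and comparing degrees forces $\deg P = 0$, whence $x - y = 2[P] \in 2\Cl^0 X$. For sufficiency I would write $x - y = 2N$ with $N \in \Cl^0 X$ and examine $\cM_y \otimes \cL_N$: it represents $s_y$ and has determinant $\cL_x$, and from $\delta(\cM_y \otimes \cL_N) = -1$ (no line subbundle of positive degree) together with $h^0(\cM_y \otimes \cL_N) = 1$ one sees that it is a non-split extension of a degree-$1$ line bundle $\cL_z$ by $\cO_X$, hence isomorphic to $\cM_z$ by the uniqueness used above; comparing determinants gives $z = y + 2N = x$, so $s_y = [\cM_z] = s_x$. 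This completes the argument.
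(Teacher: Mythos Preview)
Your argument is correct. The overall shape---reduce to $\deg\cM\in\{0,1\}$, handle degree $0$ via Atiyah plus Galois descent, handle degree $1$ by pinning down a maximal subbundle, and settle the criterion by a determinant count---matches the paper's strategy, but the execution differs in a meaningful way. In degree $1$ and in the sufficiency direction of the criterion, the paper invokes Atiyah's Theorem~7 to conclude that $\det\colon\cB_2^1(\overline X)\to\cB_1^1(\overline X)$ is a bijection, hence injective after restriction to $X$; this single stroke both classifies $\cB_2^1(X)$ and gives the equivalence $s_x=s_y\Leftrightarrow x-y\in 2\Cl^0X$. You bypass this entirely: the bound $-2g\le\delta(v)\le 2g-2$ forces $\delta(v)=-1$, a maximal subbundle of degree $0$ can be normalised to $\cO_X$, and the one-dimensionality of $\Ext^1(\cL_x,\cO_X)$ then identifies $\cM$ with $\cM_x$; for sufficiency you use Riemann--Roch ($\chi(\cM_y\otimes\cL_N)=1$, so $h^0\ge 1$) to produce a section and hence a subbundle $\cO_X\hookrightarrow\cM_y\otimes\cL_N$, recognising the bundle as some $\cM_z$ and matching determinants. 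In degree $0$ the two proofs are closer: the paper quotes Atiyah's uniqueness statement for the twisting line bundle and descends it by Galois invariance, while you phrase the same descent via the unique degree-$0$ line subbundle of $F_2\otimes\overline L$---equivalent, since that subbundle is exactly $\overline L$. The trade-off is that the paper's route is shorter once Atiyah's Theorem~7 is on the table, whereas yours is more self-contained, needing from Atiyah only the degree-$0$ classification and otherwise relying on the elementary structure theory already set up in~\ref{rr_for_ell_curves}. (Your parenthetical alternative for excluding $\delta(v)=-2$ via endomorphism algebras is also sound: base change identifies $\End(\cM)\otimes\overline\FF_q$ with $\End(\cM_{\overline X})$, which Atiyah computes to be $2$-dimensional, ruling out $\End(\cM)=\FF_q$; and $\End(\cM)=\FF_{q^2}$ forces $\cM$ to be a trace.)
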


\begin{proof}
 Let $Y$ denote one of $X$, $X'= X \otimes \overline\FF_{q^2}$ or $\overline X = X \otimes \overline\FF_q$. Let $\cB_n^d(Y)$ be the set of isomorphism classes of geometrically indecomposable rank $n$ bundles over $Y$ that have degree $d$.  We have inclusions $\cB_n^d(X)\subset\cB_n^d(X')\subset\cB_n^d(\overline X)$ (cf.\ \cite[par.\ 6.1]{Lorscheid2}). For a rank $1$ bundle $\cL$ over $Y$, the map
 $$ \begin{array}{ccc}\cB_n^d(Y)&\longrightarrow&\cB_n^{d+rn}(Y)\\
                      \cM       &\longmapsto    &\cM\otimes\cL^r\end{array} $$
 defines a bijection of sets for every $d,r\in\ZZ$ and $n\geq 1$. We have to determine the orbits of $\Pic^0 X$ in $\cB_2^0(X)$ and $\cB_2^1(X)$ to verify the theorem. We already know that $\cM_0\in\cB_2^0(X)$ and $\cM_x\in\cB_2^1(X)$ for all $x\in\Cl^1 X$.
 
 For the case $d=0$, we use the following result of Atiyah.

\begin{thm}[{\cite[Thm.\ 5 (ii)]{Atiyah}}]
 For all $\cM,\cM'\in\cB_n^0(\overline X)$, there exists a unique $\cL\in\Pic^0 \overline X$ such that $\cM\simeq\cM'\otimes\cL$.
\end{thm}

 This implies that for every $\cM\in\cB_2^0(X)$, there exists a unique $\cL\in\Pic^0\overline X$ such that $\cM\simeq\cM_0\otimes\cL$.
 The action of $\Pic^0 \overline X$ and $\Gal(\rquot{\overline \FF_q}{\FF_q})$ on vector bundles over $\overline X$ commute,
 and thus for every $\sigma\in\Gal(\rquot{\overline \FF_q}{\FF_q})$,
 $$ \cM_0\otimes\cL^\sigma\ \simeq\ (\cM_0\otimes\cL)^\sigma\ \simeq\ \cM^\sigma\ \simeq\ \cM \ \simeq\ \cM_0\otimes\cL \;. $$
 By uniqueness, $\cL^\sigma\simeq\cL$; thus $\cL\in\Pic^0 X$. Hence $[\cM]=s_0\in\PBungi X$.

 For $d=1$, we restate Atiyah's classification of indecomposable vector bundles over $\overline X$. 
 
\begin{thm}[{\cite[Thm.\ 7]{Atiyah}}]
 \label{Atiyah_thm7}
 There are bijections $\varphi_n^d: \cB_n^d(\overline X) \to\Pic^0(\overline X)$
 such that the diagrams
 $$ \xymatrix{{\cB_n^d(\overline X)}\ar[r]^{\varphi_n^d} \ar[d]^{\det} & {\Pic^0(\overline X)}\ar[d]^{(n,d)} \\
             {\cB_1^d(\overline X)}\ar[r]^{\varphi_1^d}               & {\Pic^0(\overline X)} } $$
 commute for all $d\in\ZZ$ and $n\geq 1$. Here $(n,d)$ denotes multiplication with the greatest common divisor of $n$ and $d$.
\end{thm}
 
 This means that $\det: \cB_2^1(\overline X) \to \cB_1^1(\overline X)$ is a bijection, and consequently
 the restriction \mbox{$\det: \cB_2^1(X) \to \cB_1^1(X)$} is still injective. Because every element of $\cB_1^1(X)$ is of the form $\cL_x$
 for some place $x$ of degree $1$ and because $\ \det(\cM_x)\simeq\cL_x\in\cB_1^1(X)$, we obtain that $\cB_2^1(X)=\{ \cM_x | x\in\Cl^1 X\}$.
 
 By the injectivity of the determinant map, $\cM_x\simeq\cM_y\otimes\cL$ for some $\cL\in\Pic^0 X$ 
 if and only if $\det\cM_x\simeq\det(\cM_y\otimes\cL)\simeq(\det\cM_y)\otimes\cL^2$, or, equivalently, $(x-y)\in 2\Cl^0 X$.
 This proves Theorem \ref{ell_PBungi}.
\end{proof}

\begin{rem}
 \label{const_ext_pbungi}
 Theorem \ref{ell_PBungi} shows that non-isomorphic $\PP^1$-bundles that are geometrically indecomposable may become isomorphic after extension of the base field---in contrast to the opposite result for $\PBundec X$ and $\PBuntr X$ (\cite[Lemma 6.5]{Lorscheid2}). Namely, if $x-y\notin 2\Cl^0 X$, then $s_x$ and $s_y$ are not isomorphic. But there is a finite constant extension $Y\to X$ such that $x-y\in 2\Cl^0 Y$, since geometrically the class group of an elliptic curve is divisible. Thus $s_x$ and $s_y$ become isomorphic over $Y$. For a concrete example, consider $X=X_6$, and $Y=X_6'$ as in paragraph \ref{example_X_6}.
\end{rem}

\begin{cor}
 \label{repr_s_x}
 If a rank $2$ bundle $\cM$ has $\delta(\cM)=-1$ and $\det\cM\simeq\cL_x$, then $\cM$ represents $s_x$.
\end{cor}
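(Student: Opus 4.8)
The plan is to argue that any rank $2$ bundle $\cM$ with $\delta(\cM)=-1$ and $\det\cM\simeq\cL_x$ lies in $\PBungi X$, and then to pin down which class of $\PBungi X$ it represents by the determinant. First I would rule out $\cM\in\PBundec X$ and $\cM\in\PBuntr X$: by paragraph \ref{char_PBundec}, any $v\in\PBuntr X$ has $\delta(v)$ even, whereas $\delta(\cM)=-1$ is odd, and any $v=c_D\in\PBundec X$ has $\delta(v)\geq 0$ (since $\cO_X\oplus\cL_D$ with $\deg D\geq 0$ admits $\cO_X$ or $\cL_D$ as a subbundle of degree $\geq\lceil\deg D/2\rceil$, giving $\delta\geq 0$; more directly, a decomposable bundle always has $\delta\geq 0$). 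Hence $\cM$ is geometrically indecomposable, i.e.\ $[\cM]\in\PBungi X$.

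Next, since $\delta(\cM)=-1<0$, the bundle $\cM$ has odd degree, so after twisting by a line bundle we may assume $\deg\cM=1$ without changing $[\cM]$; but twisting also changes the determinant, so instead I would work directly with the classification. By Theorem \ref{ell_PBungi}, $\PBungi X=\{s_y\mid y\in\Cl^1 X\}\amalg\{s_0\}$, and $s_0$ has $\delta(s_0)=0$ (it is represented by $\cM_0$, with $\delta(\cM_0)=0$), so $\delta(\cM)=-1$ forces $[\cM]=s_y$ for some $y\in\Cl^1 X$. Thus $\cM\sim\cM_y$, meaning $\cM\simeq\cM_y\otimes\cN$ for some $\cN\in\Pic X$. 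Taking determinants, $\cL_x\simeq\det\cM\simeq(\det\cM_y)\otimes\cN^{\otimes 2}\simeq\cL_y\otimes\cN^{\otimes 2}$, so $x-y=2[\cN]\in 2\Cl^0 X$ once we note $\cN$ has degree $0$ (comparing degrees: $1=1+2\deg\cN$). By the last assertion of Theorem \ref{ell_PBungi}, $s_x=s_y$, hence $[\cM]=s_x$.

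The only mildly delicate point is the claim that a decomposable rank $2$ bundle has $\delta\geq 0$: writing $\cM\simeq\cL_1\oplus\cL_2$ with $\deg\cL_1\geq\deg\cL_2$, the inclusion $\cL_1\to\cM$ gives $\delta(\cM)\geq\delta(\cL_1,\cM)=\deg\cL_1-\deg\cL_2\geq 0$. Everything else is bookkeeping with the classification already established in Theorem \ref{ell_PBungi} and the numerical constraints on $\delta$ recalled in paragraph \ref{char_PBundec}. I expect no real obstacle; this corollary is essentially a direct readout of the theorem, the one thing to be careful about being that the twisting trick used to normalize degrees in the proof of Theorem \ref{ell_PBungi} must not be applied naively here, since the hypothesis fixes the determinant.
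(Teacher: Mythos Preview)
Your proof is correct and follows essentially the same approach as the paper: first use $\delta(\cM)=-1$ to force $\cM$ to be geometrically indecomposable, then use the determinant to identify the class. The paper's version is terser because it invokes directly the fact, established inside the proof of Theorem \ref{ell_PBungi}, that $\det:\cB_2^1(X)\to\cB_1^1(X)$ is injective (so $\cM\simeq\cM_x$ on the nose), whereas you work at the level of $\PBungi X$ and recover the same conclusion via the last clause of Theorem \ref{ell_PBungi}; your route has the minor advantage of citing only the theorem's statement rather than an intermediate step of its proof.
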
 

\begin{proof}
 A rank $2$ bundle $\cM$ with $\delta(\cM)=-1$ must be geometrically indecomposable.
 The corollary follows from the fact that every element of $\cB_2^1(X)$ is characterised by its determinant.
\end{proof}

\begin{cor}
 Let $x\in X(\FF_q)$. Then the nucleus $\cN_x$ of the graph $\cG_x$ consists of the vertices
 $$ \Vertex\cN_x \ = \ \{\,t_D\,\}_{D\in\Cl X'-\Cl X}\ \amalg\ \{\,s_x\,\}_{x\in\Cl^1 X}
                     \ \amalg \ \{\,s_0\,\} \ \amalg \ \{\,c_D\,\}_{D\in\Cl^0 X\cup\Cl^1 X} \;. $$
\end{cor}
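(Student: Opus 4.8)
The plan is to intersect the condition $\delta(v)\le m_X+\deg x$ that cuts out the nucleus with the trichotomy $\PBun_2 X=\PBundec X\amalg\PBuntr X\amalg\PBungi X$ recalled in paragraph \ref{char_PBundec}. Since $X$ has genus $1$ we have $m_X=\max\{0,2g-2\}=0$, and since $\deg x=1$ the defining condition of $\cN_x$ simply reads $\delta(v)\le 1$.

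First I would observe that $\PBuntr X$ and $\PBungi X$ lie entirely in $\Vertex\cN_x$. For $v\in\PBuntr X$, paragraph \ref{char_PBuntr} tells us $\delta(v)$ is even and negative, so $\delta(v)\le-2\le 1$; for $v\in\PBungi X$, paragraph \ref{char_PBundec} gives $-2g\le\delta(v)\le 2g-2$, i.e.\ $\delta(v)\le 0\le 1$. Together with the description of $\PBuntr X$ in \ref{char_PBuntr} and with Theorem \ref{ell_PBungi}, this shows that the contribution of $\PBuntr X\amalg\PBungi X$ to $\Vertex\cN_x$ is exactly $\{t_D\}_{D\in\Cl X'-\Cl X}\amalg\{s_x\}_{x\in\Cl^1 X}\amalg\{s_0\}$.

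It then remains to decide which classes $c_D\in\PBundec X$ satisfy $\delta(c_D)\le 1$, and for this I would prove the auxiliary identity $\delta(c_D)=|\deg D|$. Using $c_D=c_{-D}$ (paragraph \ref{char_PBundec}) one may assume $\deg D\ge 0$ and represent $c_D$ by $\cM=\cO_X\oplus\cL_D$; the line subbundle $\cL_D\to\cM$ has torsion-free quotient $\cO_X$ and $\delta(\cL_D,\cM)=\deg D$, so $\delta(c_D)\ge\deg D$. Conversely, for any line subbundle $\cL\to\cM$ at least one of the two component maps $\cL\to\cO_X$, $\cL\to\cL_D$ is nonzero, hence an injection of line bundles, so $\deg\cL\le\max\{0,\deg D\}=\deg D$ and $\delta(\cL,\cM)=2\deg\cL-\deg D\le\deg D$. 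Thus $\delta(c_D)=\deg D$ for $\deg D\ge0$, i.e.\ $\delta(c_D)=|\deg D|$ in general, and $\delta(c_D)\le 1$ precisely when $\deg D\in\{-1,0,1\}$---which, again by $c_D=c_{-D}$, is the family $\{c_D\}_{D\in\Cl^0 X\cup\Cl^1 X}$.

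Finally, since $\PBundec X$, $\PBuntr X$ and $\PBungi X$ are pairwise disjoint, assembling the three contributions gives the asserted description of $\Vertex\cN_x$ as a disjoint union. The only genuine computational input is the identity $\delta(c_D)=|\deg D|$, and even that is elementary; I do not expect a real obstacle here, the corollary being essentially a bookkeeping consequence of Theorem \ref{ell_PBungi} and the $\delta$-bounds of paragraph \ref{char_PBundec}.
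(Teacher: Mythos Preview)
Your proof is correct and follows essentially the same approach as the paper: the paper's argument is a two-sentence sketch that simply cites Theorem \ref{ell_PBungi} for $\PBungi X$ and refers to paragraph \ref{char_PBundec} for everything else. You have merely filled in the details the paper leaves implicit, in particular the verification that $\delta(c_D)=|\deg D|$, which the paper treats as understood.
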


\begin{proof}
 Theorem \ref{ell_PBungi} describes $\PBungi X$, which is contained in the vertex set of the nucleus. The description of all other vertices follows from the definition of the nucleus and the classification of $\PBundec X$ and $\PBuntr X$ as described in paragraph \ref{char_PBundec}.
\end{proof}


\section{Edges}
\label{section_edges}

\noindent
In this section, we investigate the edges of the graphs $\cG_x$ for degree $1$ places $x$, i.e.\ $x\in X(\FF_q)$. In Section \ref{section_graphs}, we described these graphs up to the nucleus $\cN_x$ and in the previous section we determined the vertices of $\cN_x$. We illustrate our knowledge in Figure \ref{figure_genstr_ell}. 
 
\begin{figure}[htb] 
 \begin{center}
  \includegraphics{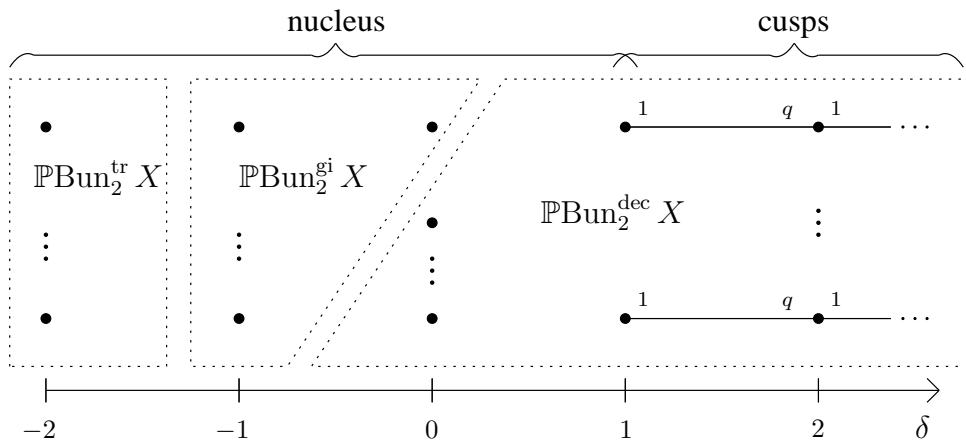}
  \caption{$\cG_x$ up to a finite number of edges} 
  \label{figure_genstr_ell}
 \end{center} 
\end{figure}

\noindent
 Fix a place $x$ of degree $1$. The divisor classes of degree $0$ can be represented by $x-z$ where $z$ is place of degree $1$ which is uniquely determined by the divisor class $x-z$. The following theorem characterizes all missing edges of $\cG_x$.

\begin{thm}
 \label{str_thm_graphs_for_genus_1}
 Let $x$ be a prime divisor of degree $1$ and $h_2=\#\Cl^0 X[2]$ the cardinality of the $2$-torsion of the class group.
 Then the edges with origin in $\cN_x$ are given by the following list:
 \begin{align*}
  \cU_x(c_0) & = \{(c_0,c_{x},q+1)\} \;, \\
  \cU_x(c_x) & = \{(c_x,c_{2x},1),(c_x,c_0,1),(c_x,s_0,q-1)\} \;, \\
  \cU_x(c_y) & = \{(c_y,c_{y+x},1),(c_y,c_{y-x},q)\} \hspace{0,6cm} \text{if }y\neq x, \\
  \cU_x(c_{y-x}) & =\{(c_{y-x},c_y,2),(c_{y-x},s_y,q-1)\} \hspace{0,6cm} \text{if }y\neq x,\text{ but }y-x\in(\Cl X)[2], \\
  \cU_x(c_{y-x}) & =\{(c_{y-x},c_y,1),(c_{y-x},c_{2x-y},1),(c_{y-x},s_y,q-1)\} \hspace{0,6cm} \text{if }y-x\notin(\Cl X)[2], \\
  \cU_x(s_0) & =\{(s_0,c_x,1),(s_0,s_x,q)\}\;, \\
  \cU_x(t_D) & =\{(t_D,s_{x+D+\sigma D},q+1)\} \hspace{0,6cm} \text{for }D\in\Cl^0 X'-\Cl^0 X \text{ and} \\
  \cU_x(s_y) & = \{(s_y,s_0,h_2)\mid \text{if } y\equiv x\pmod{2\Cl^0X}\} \vspace{0,3cm}\\
                    & \quad\cup \ \left\{\ (s_y,c_{z-x},\tfrac 12 h_2)\ \left|\  
				             \begin{subarray}{l}\text{if }(z-x)\in(\Cl^0 X)[2],\\ 
								                 z\neq x\text{ and }(z-y)\in2\Cl^0X \end{subarray}\ 
							    \right.\right\} \vspace{0,3cm}\\
                    & \quad\cup \ \left\{\ (s_y,c_{z-x},h_2)\ \left|\ 
				             \begin{subarray}{l}\text{if }(z-x)\notin(\Cl^0 X)[2]\\ 
								                 \text{and }(z-y)\in2\Cl^0X\end{subarray}\ 
							    \right.\right\} \vspace{0,3cm}\\
                    & \quad\cup \ \left\{\ (s_y,t_D,\tfrac 12 h_2)\ \left|\ 
				             \begin{subarray}{l}\text{if }D\in(\Cl^0X'-\Cl^0X),\,2D\in\Cl^0X\\ 
								                 \text{and }y\equiv D+\sigma D+x \pmod{2\Cl^0X} \end{subarray}\ 
							    \right.\right\} \vspace{0,3cm}\\
                    & \quad\cup \ \left\{\ (s_y,t_D,h_2)\ \left|\ 
				             \begin{subarray}{l}\text{if }D\in(\Cl^0X'-\Cl^0X),\,2D\notin\Cl^0X\\ 
								                 \text{and }y\equiv D+\sigma D+x \pmod{2\Cl^0X} \end{subarray}\ 
							    \right.\right\} \hspace{0,6cm}\text{for }y\in\Cl^1X.
 \end{align*}
\end{thm}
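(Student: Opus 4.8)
The plan is to compute each neighborhood $\cU_x(v)$ by unwinding the definition of $m_x(v,v')$ in \ref{def_hecke_graphs}: fix a representative $\cM$ of $v$, and count isomorphism classes of exact sequences $\ses{\cM''}{\cM}{\cK_x}$ with $\cM''\sim v'$. Since $\cK_x$ is the skyscraper $\kappa_x$ at the degree-$1$ place $x$, giving such a sequence is the same as giving a surjection $\cM\twoheadrightarrow\kappa_x$, i.e.\ a line in the $2$-dimensional fibre $\cM\otimes\kappa_x$; so the edges out of $v$ are parametrized by $\PP^1(\FF_q)$, which already accounts for the normalization $\sum m = q+1$. For each such line, $\cM''$ is the kernel, an elementary modification of $\cM$ at $x$, and one reads off $\delta(\cM'')$ and $\det\cM''\simeq(\det\cM)\otimes\cL_x^{-1}$. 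The weight $m_x(v,v')$ is then the number of lines in $\cM\otimes\kappa_x$ yielding a fixed isomorphism class $v'$ of modification; this can be organized via the action of $\Aut(\cM)$ on $\PP^1(\FF_q)=\PP(\cM\otimes\kappa_x)$, the orbits corresponding to the distinct neighbours and their sizes to the multiplicities. I would carry this out case by case along the vertex list of the corollary preceding Section~\ref{section_edges}: first the cusp vertices $c_0, c_x, c_y, c_{y-x}$ (here $\cM=\cO_X\oplus\cL_D$ and the modifications are explicit, with the root/non-root dichotomy of $y-x\in(\Cl X)[2]$ governing whether the two "generic" neighbours coincide), then $s_0$ (where $\cM=\cM_0$, a nontrivial self-extension of $\cO_X$, so $\Aut$ is large and the $q+1$ lines split as $1 + q$), then the toroidal vertices $t_D$, and finally the vertices $s_y$.

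For the $c_D$ and $s_0$ rows the identification of the resulting bundle classes uses the machinery already in place: Corollary~\ref{repr_s_x} (a rank $2$ bundle with $\delta=-1$ and determinant $\cL_x$ represents $s_x$), the values of $\delta$ on $\PBundec X$, $\PBuntr X$, $\PBungi X$ from paragraph~\ref{char_PBundec}, and the dimension formulas of \ref{rr_for_ell_curves} to control $\Hom$ and $\Ext^1$ groups (hence maximal subbundles and automorphism groups). The $t_D$ row should follow from writing $t_D$ as the trace (Weil restriction) of a line bundle $\cL_D$ on $X'=X\otimes\FF_{q^2}$: a modification at the split place $x$ of the trace is the trace of a modification of $\cL_D$ at the two places above $x$, and one computes that all $q+1$ modifications land in the single class $s_{x+D+\sigma D}$, consistent with $\det = \cL_x^{-1}\otimes\cL_{D+\sigma D}=\cL_{D+\sigma D - x}$ having the right parity (degree $-1$) to be an $s$-vertex, using that $\delta(t_D)$ is even and negative so $\delta$ must jump.

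The genuinely hard part is the $s_y$ row, i.e.\ computing $\cU_x(s_y)$, and more precisely determining the \emph{inverse} edges — an edge $(v,v',m)$ forces an edge $(v',v,m')$ by \cite[par.\ 3.2]{Lorscheid2}, but $m'\neq m$ in general, so knowing $\cU_x(c_D)$, $\cU_x(t_D)$, $\cU_x(s_0)$ only tells us \emph{which} vertices are neighbours of $s_y$, not with what multiplicity. The strategy here is: (i) from the already-computed rows above, collect every edge whose terminus is some $s_y$; this pins down the neighbour set of each $s_y$ and, crucially, forces all these neighbours to lie among the cusps $c_{z-x}$, the vertex $s_0$, and the toroidal vertices $t_D$ — there are no $s_y$-to-$s_{y'}$ edges with $y'\neq 0$ because $\delta(s_y)=\delta(s_{y'})=-1$ is already minimal and a modification at a degree-$1$ place changes $\delta$ by $\pm1$, so it must increase. (ii) Use the constraint $\sum_{v'} m_x(s_y,v') = q+1$ together with the structure of $\Aut(\cM_y)$: since $\cM_y$ is a nontrivial extension of $\cL_x$ by $\cO_X$ with $\Gamma(\cL_x)$ one-dimensional (\ref{rr_for_ell_curves}), its automorphism group and its action on $\PP(\cM_y\otimes\kappa_x)$ are computable, and the orbit sizes are forced to be $h_2$ or $\tfrac12 h_2$ exactly according to whether the target modification has an extra symmetry (the $2$-torsion ambiguity $s_z = s_{z'} \iff z-z'\in 2\Cl^0 X$ from Theorem~\ref{ell_PBungi} reappearing here, which is why $h_2 = \#\Cl^0X[2]$ enters). (iii) Cross-check via the reciprocity between $m_x(s_y, c_{z-x})$ and the already-known $m_x(c_{z-x}, s_y)$ from the $\cU_x(c_{y-x})$ rows, which both determines the remaining constants and confirms the $\tfrac12 h_2$ versus $h_2$ split matches the root/non-root condition $(z-x)\in(\Cl^0 X)[2]$. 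This bookkeeping — tracking the factor-of-two ambiguities through determinants, traces, and automorphism orbits simultaneously — is where the real work lies; everything else is a finite, if lengthy, unwinding of modifications.
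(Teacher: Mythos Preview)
Your outline for the vertices $c_0$, $c_x$, $c_y$, $c_{y-x}$, $s_0$, and $t_D$ is essentially the paper's argument: parametrize modifications by $\PP^1(\FF_q)$, track $\delta$ and the determinant, and use Corollary~\ref{repr_s_x} to recognize the resulting class. (One small slip: a degree-$1$ place $x$ of $X$ does not split in $X'=X\otimes\FF_{q^2}$; the paper handles $t_D$ simply by noting $\delta(t_D)=-2$ forces $\delta(\cM')=-1$, then reading off $\det\cM'$.) Your step~(i) for $s_y$ is also right and matches the paper: the neighbour set is pinned down by inverting the already-computed edges, since $\delta$ must change by $\pm1$.

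The gap is in step~(ii). The bundle $\cM_y$ is stable ($\delta=-1$), so $\Aut(\cM_y)=\FF_q^\times$ consists only of scalars, and these act \emph{trivially} on $\PP(\cM_y\otimes\kappa_x)$. So $\Aut(\cM_y)$-orbits are singletons and cannot account for multiplicities $h_2$ or $h_2/2$; nor does the edge-reciprocity in~(iii) determine the reverse weight from the forward one. What actually works, and what the paper does, is different in two essential respects. First, one proves a lemma: for a fixed rank-$2$ bundle $\cM'$ (not just its $\PP^1$-class) there is at most one exact sequence $0\to\cM'\to\cM_y\to\cK_x\to0$ up to isomorphism with fixed $\cM_y$; the proof is not formal---one reduces to $\delta(\cM')=0$ after passing to $X'$, and shows two such inclusions would force $c_0$ to be a $\Phi_{x'}$-neighbour of $s_y$, a contradiction. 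Second, one uses Atiyah's result (Theorem~\ref{Atiyah_thm7}) that $\cM_y\otimes\cL_0\simeq\cM_y$ precisely for $\cL_0\in(\Pic X)[2]$: this gives an action of $(\Pic X)[2]$ on the set of kernels $\cM'\hookrightarrow\cM_y$, sending $\cM'$ to $\cM'\otimes\cL_0$, and by the lemma the multiplicity of $[\cM']$ equals the size of the $(\Pic X)[2]$-orbit of $\cM'$---which is $h_2$ or $h_2/2$ according to whether $\cM'\otimes\cL_0\not\simeq\cM'$ for all nontrivial $\cL_0$. The relevant symmetry group is thus the $2$-torsion of $\Pic X$ acting through the self-twists of $\cM_y$, not $\Aut(\cM_y)$ itself.
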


\begin{proof}
 The rest of this section is dedicated to the proof of the theorem. There are illustrations of the sets described in the theorem at the appropriate places in the proof. We draw vertices $v$ from left to right to indicate an increasing value of $\delta(v)$. In section \ref{section_examples} we show illustrations of entire graphs.

 We recall some results that we will use in the proof without further reference. The weights of all $\Phi_x$-neighbours of each vertex sum up to $q+1$ (\cite[Prop.\ 2.3]{Lorscheid2}). If $v$ and $w$ are $\Phi_x$-neighbours, then $\delta(w)=\delta(v)\pm 1$ (\cite[Lemma 8.2]{Lorscheid2}). The $\Phi_x$-neighbours $v'$ of a vertex $v=[\cM]$ with $\delta(v')=\delta(v)+1$ counted with multiplicity are in bijection with the maximal subbundles of $\cM$ (\cite[Lemma 8.4]{Lorscheid2}). This bijection is given by associating to a maximal subbundle $\cL\to\cM$ the unique sequence $ 0\to\cM'\to\cM\to\cK_x\to0 $ such that $\cL\to\cM$ lifts to a subbundle $\cL\to\cM'$ (cf.\ \cite[par.\ 8.3]{Lorscheid2}). We call this sequence the \emph{sequence associated to $\cL\to\cM$}. 

 We shall also need the following lemma. Let $\cI_x$ be the kernel of $\cO_X\to\cK_x$, which is the inverse of the line bundle $\cL_x$ in $\Pic X$. 

\begin{lemma}
 \label{lift_split}
 Let $\cL\to\cM$ be a line subbundle and 
 $$ \ses{\cM'}{\cM}{\cK_x} $$
 the associated sequence. Let $\cL'=\cM/\cL$. If $\cM\simeq\cL\oplus\cL'$, then $\cM'\simeq\cL\oplus\cL'\cI_x$.
\end{lemma}

\begin{proof}
 Note that $\cM'/\cL\simeq(\det\cM)\cI_x\cL^\vee\simeq\cL'\cI_x$.
 The hypothesis can be illustrated by the diagram
 $$ \xymatrix{0\ar[r] & {\cL}\ar[r]\ar@{=}[d] & {\cM'}\ar[r]\ar[d] & {\cL'\cI_x}\ar[r]\ar[dl]\arincl[d] & 0 \\
             0\ar[r] & {\cL}\ar[r] & {\cM}\ar[r] & {\cL'}\ar[r]\ar@/_/[l] & 0 \;.\hspace{-5pt} } $$
 Since the composition $\cL'\cI_x\to\cM\to\cK_x$ is zero, $\cL'\cI_x\to\cM$ lifts to $\cL'\cI_x\to\cM'$, 
 and the upper sequence also splits.
\end{proof}

 We prove the theorem case by case.

\begin{list}{$\bullet$}
            {\setlength{\leftmargin}{0cm}\setlength{\itemindent}{1em}\setlength{\listparindent}{1.5em}\setlength{\parsep}{0.0em}}
 \item 
 By \cite[Thm.\ 8.5]{Lorscheid2}, $c_x$ is the only $\Phi_x$-neighbour of $c_0$, which describes $\cU_x(c_0)$ completely:
\begin{center} \includegraphics{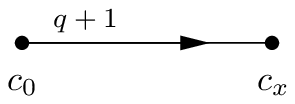} \end{center}
 
 \item 
 Let $\cM=\cL_x\oplus\cO_X$ represent $c_x$. We know from \cite[Thm.\ 8.5]{Lorscheid2} that $c_{2x}$ is the only neighbour $\cM'$ with $\delta(\cM')=2$. It has multiplicity $1$ and is given by the sequence associated to $\cL_x\to\cM$. By Lemma \ref{lift_split}, the sequence associated to $\cO_X\to\cM$ gives $\cO_X\oplus\cO_X$ as neighbour. For all other $q-1$ neighbours $\cM'$, neither $\cL_x\to\cM$ nor $\cO_X\to\cM$ lifts to $\cM'$, but then $\cL_x\cI_x\subset\cL_x\to\cM$ lifts to a subbundle $\cO_X\simeq\cL_x\cI_x\to\cM'$. We have that $\det\cM'\simeq(\det\cM)\cI_x\simeq\cO_X$, but $\cO_X\to\cM'$ cannot have a complement, since otherwise $\cO_X\to\cM$ would lift. Thus $\cM'$ must represent $s_0$. This describes $\cU_x(c_x)$:
\begin{center} \includegraphics{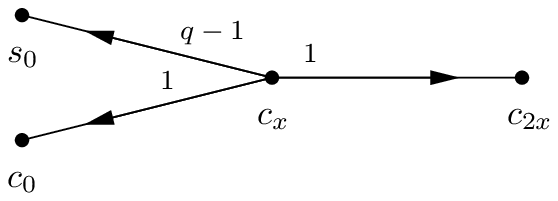} \end{center}

 \item 
 Let $\cM=\cL_y\oplus\cO_X$ represent $c_y$ with $y\neq x$. Again, we know from \cite[Thm.\ 8.5]{Lorscheid2} that $c_{y+x}$ is the only neighbour $\cM'$ with $\delta(\cM')=2$, and it has multiplicity $1$. For all other $q$ neighbours, $\cL_y\cI_x\to\cM'$ is a subbundle, and $\rquot{\cM'}{\cL_y\cI_x}\simeq\cO_X$. But since $\cL_y\cI_x\nsimeq\cO_X$, we have that $\Ext^1(\cL_y\cI_x,\cO_X)=0$ (paragraph \ref{rr_for_ell_curves}), and thus $\cM'$ decomposes. We obtain for $\cU_x(c_y)$:
\begin{center} \includegraphics{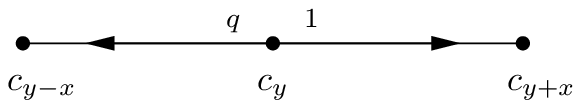} \end{center}

 \item 
 Let $\cM=\cL_y\oplus\cL_x$ represent $c_{y-x}$ with $y\neq x$. 
 Then the sequences associated to the two maximal subbundles $\cL_y\to\cM$ and $\cL_x\to\cM$ determine two neighbours $\cL_y\oplus\cO_X$ and
 $\cL_y\cI_x\oplus\cL_x$. They both decompose by Lemma \ref{lift_split} 
 and represent $c_y$ and $c_{2x-y}$, respectively. For all other $q-1$ neighbours $\cM'$, no maximal line bundle lifts, and
 thus $\delta(\cM')=-1$. Since $\det\cM'\simeq\cL_y\cL_x\cI_x\simeq\cL_y$, by Corollary \ref{repr_s_x}, $\cM'$ represents $s_y$.
 We have $c_{2x-y}=c_{y}$ if and only if $\cL_x^2\cL_y^{-1}\simeq\cL_y$, or equivalently, $\bigl(\cL_x\cL_y^{-1}\bigr)^2\simeq\cO_X$. This
 means that these two neighbours are the same if and only if $x-y\in(\Cl X)[2]$. If this is the case, we get for $\cU_x(c_{y-x})$:
\begin{center} \includegraphics{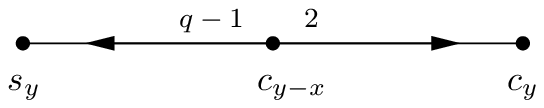} \end{center}

 \item 
 If $x-y\notin(\Cl X)[2]$, $\cU_x(c_{y-x})$ looks like:
\begin{center} \hspace{21pt}\includegraphics{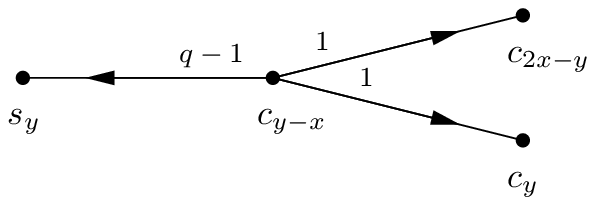} \end{center}

 \item 
 Let $\cM$ be the bundle $\cM_0$ of paragraph \ref{rr_for_ell_curves}, which represents $s_0$. Then it has a unique maximal subbundle 
 $\cO_X\to\cM$ and an associated neighbour $\cM'$ with $\delta(\cM')=1$, which decomposes. Because its maximal subbundle is $\cO_X\to\cM'$,
 $\det\cM'\simeq\cI_x$, and we recognize it as $\cO_X\oplus\cI_x$. Thus $\cM'$  represents $c_x$. 
 All $q$ other neighbours $\cM'$ have $\delta(\cM')=-1=\delta(\cM'\otimes\cL_x)$, and $\det(\cM'\otimes\cL_x)\simeq\cI_x\cL_x^2\simeq\cL_x$.
 By Corollary \ref{repr_s_x}, $\cM'\otimes\cL_x$ and thus also $\cM'$ represent $s_x$, and $\cU_x(s_0)$ is as follows:
\begin{center} \includegraphics{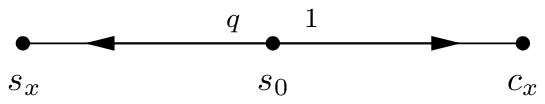} \end{center}

 \item 
 Let $\cM$ represent $t_D$ for a $D\in\Cl^0 X' - \Cl^0 X$. Since $\delta(t_D)=-2$, 
 every neighbour $\cM'$ of $\cM$ must have $\delta(\cM')=-1$.
 It is determined by its determinant, which we can calculate by extending constants to $X'$. 
 We have $\det\cM'\simeq\cI_x\det(\cL_D\oplus\cL_{\sigma D})\simeq\cI_x\cL_D\cL_{\sigma D}$. 
 Because $-x+D+\sigma D\equiv x+D+\sigma D\pmod{2\Cl X}$, Corollary \ref{repr_s_x} implies that $\cM'$ represents $s_{x+D+\sigma D}$.
 We obtain for $\cU_x(t_D)$:
\begin{center} \includegraphics{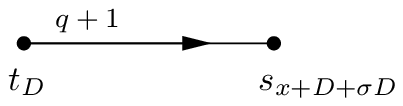} \end{center}
 
 \item 
 The most subtle part is to determine the neighbours of $s_y$ for $y\in\Cl^1 X$. We choose $\cM_y$ as representative for $s_y$, see paragraph \ref{rr_for_ell_curves}, and recall that it was defined by a nontrivial element in $\Ext^1(\cO_X,\cL_y)$. Thus $\det(\cM_y)=\cL_y$, and $\delta(\cM_y)=-1$. Look at an exact sequence
 $$ \ses{\cM'}{\cM_y}{\cK_x} \;. $$
 Then $\det(\cM')\simeq(\det\cM_y)\cI_x\simeq\cL_{y-x}\in\Pic^0 X$, and $\delta(\cM')\in\{-2,0\}$. By the symmetry of edges (see paragraph \ref{def_hecke_graphs}), $s_y$ must also be a neighbour of $[\cM']$. But we have already determined the neighbours of vertices $v$ with these properties. We find that for $(z-x)\in\Cl^0 X-\{0\}$, $c_{z-x}$ is a neighbour of $s_y$ if and only if $y\equiv z \pmod{2\Cl^0 X}$, $t_D$ with $D\in\Cl^0 X'-\Cl^0 X$ is a neighbour of $s_y$ if and only if $y\equiv x+D+\sigma D\pmod{2\Cl^0 X}$, and $s_0$ is a neighbour of $s_y$ if and only if $y\equiv x\pmod{2\Cl^0 X}$, but $c_0$ is never a neighbour of $s_y$. This shows that the theorem lists precisely the neighbours of $s_y$. There is still some work to be done to determine the weights. We begin with an observation.

\begin{lemma}
 \label{only_one_sequence}
 Up to isomorphism with fixed $\cM_y$, there is at most one exact sequence $\ 0\to\cM'\to\cM_y\to\cK_x\to0\ $ for a fixed $\cM'$.
\end{lemma}

\begin{proof}
 Suppose there are two. We derive a contradiction as follows. 
 If $\delta(\cM')\neq0$, then $\cM'$ must be a trace of a line bundle $\cL$ defined over $X'$. By extending
 constants to $\FF_{q^2}$, we may assume that $\delta(\cM')=0$ and that there are $\cL,\cL'\in\Pic^0X$ such that $\cM'$ is an extension
 of $\cL'$ by $\cL$. The composition $\cL\to\cM'\to\cM$ defines a maximal subbundle of $\cM$ because $\delta(\cL,\cM)=-1$. We get back
 the inclusion $\cM'\to\cM$ by taking the associated sequence. 
 Since we assume we have two different inclusions of $\cM'$ into $\cM$, we get
 two different subbundles of the form $\cL\to\cM$, thus an inclusion $\cL\oplus\cL\to\cM$. The cokernel is a torsion sheaf of degree $1$
 defined over $\FF_{q^2}$, say $\cK_{x'}$ for a place $x'$ of $\FF_{q^2}F$, and we obtain an exact sequence
 $$ \ses{\cL\oplus\cL}{\cM_y}{\cK_{x'}} \;; $$
 $c_0=[\cL\oplus\cL]$ is thus an $\Phi_{x'}$-neighbour of $\cM_y$. This is a contradiction as $s_y$ is not a neighbour of $c_0$.
\end{proof}

 We consider a second neighbour $\cM''$ of $\cM_y$ that represents the same element as $\cM'$ in $\PBun X$, i.e.\ 
 $\cM''\simeq\cM'\otimes\cL_0$ for some $\cL_0\in\Pic X$. Since they have the same determinant,
 $ \cL_0^2 \simeq \det(\cM'\otimes\cL_0)(\det\cM')^{-1} \simeq (\det\cM'')(\det\cM')^{-1} \simeq \cO_x $,
 we have $\cL_0\in(\Pic X)[2]$. On the other hand, Theorem \ref{Atiyah_thm7} tells us that for $\cM_y\in\cB_2^1(X)$,
 $\cM_y\otimes\cL_0\simeq\cM_y$ if and only if $\cL_0\in(\Pic X)[2]$. Thus $(\Pic X)[2]$ acts on the sequences that we investigate.
 By Lemma \ref{only_one_sequence}, we find that the multiplicity of a neighbour $\cM'$ of $\cM_y$ equals the number of isomorphism classes
 that $\cM'\otimes\cL_0$ meets as $\cL_0$ varies through $(\Pic X)[2]=(\Pic^0 X)[2]$.
 
 We begin with the case of a neighbour $\cM'$ that is associated to a maximal subbundle $\cL\to\cM_y$. Then $\delta(\cL,\cM')=0$.
 If $\rquot{\cM'}{\cL}\simeq\cL$, the only possibility with these properties is $s_0$.
 But then $\cL\to\cM'$ is the only maximal subbundle, so all $\cL\otimes\cL_0$ with $\cL_0\in(\Pic^0 X)[2]$ have different associated
 sequences, and the multiplicity of $s_0$ is therefore $h_2=\#(\Pic^0 X)[2]$.

 If $\cL':=\rquot{\cM'}{\cL}\nsimeq\cL$, 
 then $\cM'$ represents $c_{z-x}$ for the divisor $(z-x)\in\Cl^0 X$ that satisfies $\cL_{z-x}\simeq\cL'\cL^{-1}$. 
 Since $\cL_{y-x}\simeq\det\cM'\simeq\cL\cL'$, we have $z\equiv y \pmod{2\Cl^0 X}$.
 The rank $2$ bundle $\cM'$ has two different maximal subbundles, and it could happen that
 $\cM'\simeq\cM'\otimes\cL_0$ for some $\cL_0\in(\Pic^0 X)[2]-\{\cO_X\}$. This only happens if
 $\cL'\simeq\cL\cL_0$, so $\cL'\cL^{-1}\in(\Pic^0 X)[2]$, or equivalently, $(z-x)\in(\Cl^0 X)[2]$.
 Thus the multiplicity of $c_{z-x}$ as a neighbour of $s_y$ is $h_2/2$ 
 if $(z-x)\in(\Cl^0 X)[2]-\{0\}$ and $h_2$ if $(z-x)\notin(\Cl^0 X)[2]$.
 
 The last case is that of $\delta(\cM')=-2$, where $\cM'$ is the trace of a line bundle $\cL_D$, where $D\in\Cl^0 X'-\Cl^0 X$.
 If we lift the situation to $X'$, then $\cM'\simeq\cL_D\oplus\cL_{\sigma D}$, 
 and we see as in the preceding case that $\cM'\simeq\cM'\otimes\cL_0$ for some $\cL_0\in(\Pic^0 X)[2]-\{\cO_X\}$
 if and only if $D-\sigma D\in(\Cl^0 X)[2]$. This is equivalent to the two conditions $D-\sigma D\in\Cl^0 X$ and \mbox{$2D-2\sigma D=0$}, 
 or $2D=(D-\sigma D)+(D+\sigma D) \in\Cl^0 X$ and $2D = \sigma(2D)$,
 respectively, both saying that $2D\in\Cl^0 X$. This finally gives for $D\in\Cl^0 X'-\Cl^0 X$ that 
 $t_D$ has multiplicity $h_2/2$ as neighbour of $s_y$ if $2D\in\Cl^0 X$ and $h_2$ if $2D\notin\Cl^0 X$. We illustrate this below.
 The dashed arrow only occurs if $y-x\in2\Cl^0 X$. The indices $z$ and $D$ take all possible values as in the theorem,
 and $\alpha\in\{1/2,1\}$ depends on the particular edge.
 \begin{center} \includegraphics{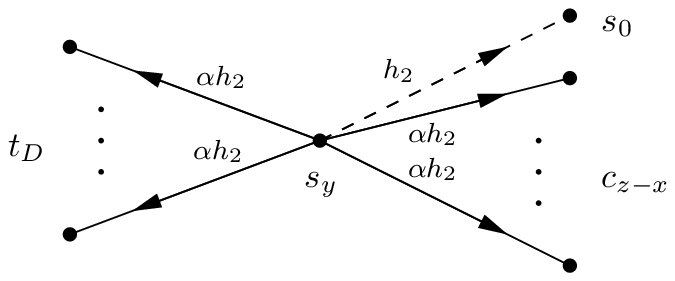} \end{center}
\end{list}
 This completes the proof of the theorem. 
\end{proof}


\section{Examples}
\label{section_examples}

\noindent
 This section provides some examples of graphs of Hecke operators.

\begin{eg}[class number one]\label{eg_class_number_1}
 The easiest examples are given by elliptic curves with only one rational point $x$. These examples can be found in the literature, cf.\ \cite{Cornelissen-Lorscheid}, \cite[2.4.4 and Ex. 3 of 2.4]{Serre} or \cite{Takahashi}. There are up to isomorphism three such elliptic curves: $X_2$ over $\FF_2$ defined by the Weierstrass equation $\underline Y^2+\underline Y=\underline X^3+\underline X+1$, $X_3$ over $\FF_3$ defined by the Weierstrass equation $\underline Y^2=\underline X^3+2\underline X+2$ and $X_4$ over $\FF_4$ defined by the Weierstrass equation $\underline Y^2+\underline Y=\underline X^3+\alpha$ with $\FF_4=\FF_2(\alpha)$. Since the class number is $1$, $\PBundec X_q = \{c_{nx}\}_{n\geq0}$ and $\PBungi X_q=\{s_0,s_x\}$ for $q\in\{2,3,4\}$. One calculates that $\Cl^0(X_2\otimes\FF_4)\simeq\ZZ/5\ZZ$, $\Cl^0(X_3\otimes\FF_9)\simeq\ZZ/7\ZZ$ and $\Cl^0(X_4\otimes\FF_{16})\simeq\ZZ/9\ZZ$, thus $\PBuntr X_q$ has $q$ different elements $t_1,\dotsc,t_q$ for $q\in\{2,3,4\}$. We obtain Figure \ref{ell_h=1_graph}.
\end{eg}

\begin{figure}[htb] 
 \begin{center} 
  \includegraphics{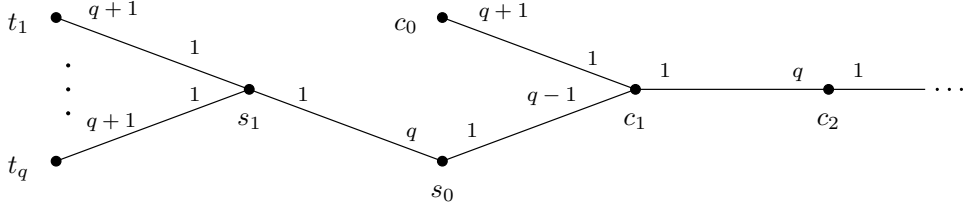} 
  \caption{$\cG_x$ for the unique degree one place $x$ of the elliptic curves $X_q$ for $q=2,3,4$} 
  \label{ell_h=1_graph}
 \end{center} 
\end{figure}

\begin{eg}[odd class number]
 \label{illustrations_ell_graphs}
 More generally, let the class number $h$ be odd. Let $x$ a place of degree $1$. Then $\cG_x$ has only one component. We write $\{x,z_2,\ldots,z_{h}\}=\Cl^1 X$ where the $z_i$'s are ordered such that $z_{2i}-x=x-z_{2i+1}$ for $i=1,\ldots,(h-1)/2$ and $\{t_1,\ldots,t_{r'}\}=\PBuntr X$. Then we can illustrate the graph of $\Phi_x$ as in Figure \ref{ell_h_odd_graph}.
\end{eg}

\begin{figure}[htb] 
 \begin{center} 
  \includegraphics{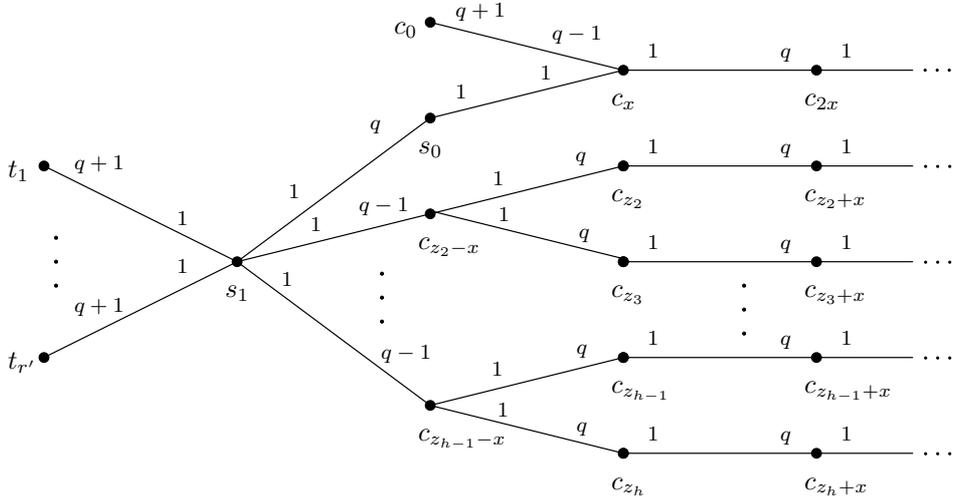} 
  \caption{$\cG_x$ for a degree one place $x$ of an elliptic curve with odd class number} 
  \label{ell_h_odd_graph}
 \end{center} 
\end{figure}

 We give two examples for elliptic curves with even class number.
 Both examples are elliptic curves over $\FF_3$ with class number $4$, but with respective class group $\ZZ/4\ZZ$ and $(\ZZ/2\ZZ)\times(\ZZ/2\ZZ)$.

\begin{eg}
 The first example is the elliptic curve $X_5$ over $\FF_3$ defined by the Weierstrass equation $\underline Y^2=\underline X^3+\underline X+2$, which has class group $\Cl^0 X_5\simeq\ZZ/4\ZZ$. Let $X(\FF_3)=\{x,y,z,z'\}$ such that $x-y$ is the element of order $2$. The number of components is $h_2=2$, and $\PBungi X$ is given by $s_0$, $s_x=s_{y}$ and $s_{z}=s_{z'}$. The class group of $X_5'=X_5\otimes\FF_9$ is $\Cl^0 X_5'\simeq(\ZZ/4\ZZ)^2$, thus $\rquot{\Cl^0 X_5'}{\Cl^0 X_5}\simeq\ZZ/4\ZZ$. Let $\{0,D,D',D''\}$ be representatives such that $D$ is the divisor with $2D\in\Cl^0 X_5$. Then $\PBuntr X_5$ contains the two elements $t_D$ and $t_{D'}=t_{D''}$. We do not need to calculate the norm map $\Cl^0 X_5'\to\Cl^0 X_5$ as we can find out to which of $t_{D}$ and $t_{D'}$ the vertices $s_x$ and $s_z$ are connected by the constraint that the weights around $s_x$ and $s_z$, respectively, sum up to $4$. The graph is illustrated in Figure \ref{ell_h=4_graphA}.
\end{eg}
 
\begin{figure}[htb] 
 \begin{center} 
  \includegraphics{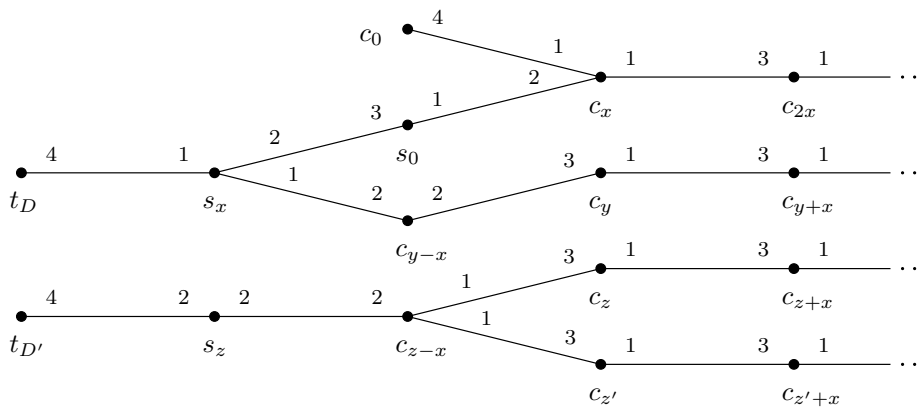} 
  \caption{$\cG_x$ for a degree one place $x$ of the elliptic curve $X_5$} 
  \label{ell_h=4_graphA}
 \end{center} 
\end{figure}

\begin{figure}[htb] 
 \begin{center} 
  \includegraphics{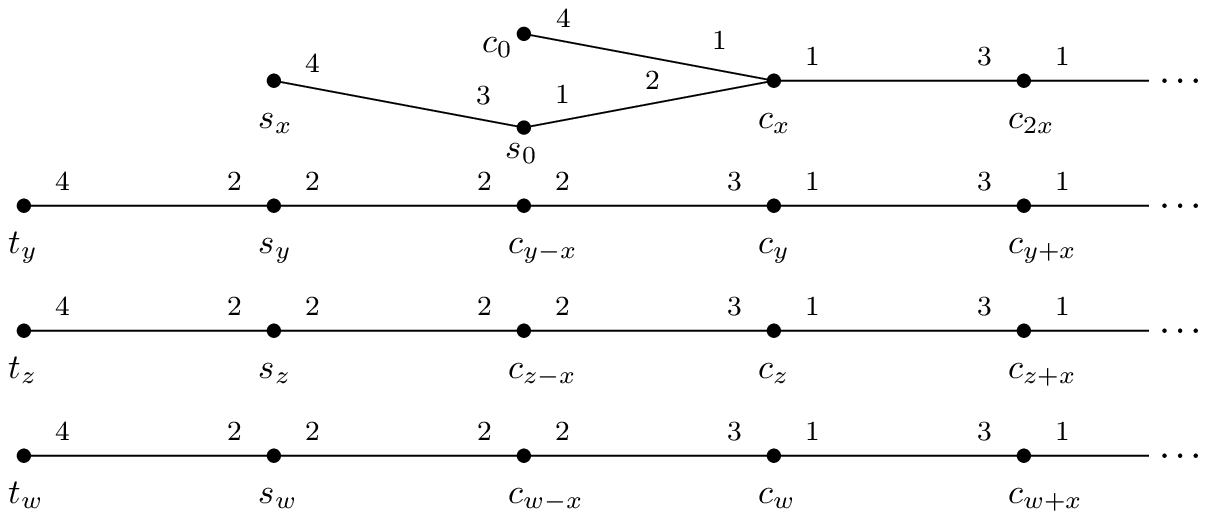} 
  \caption{$\cG_x$ for a degree one place $x$ of the elliptic curve $X_6$} 
  \label{ell_h=4_graphB}
 \end{center} 
\end{figure}

\begin{eg} 
 \label{example_X_6}
 The second example $X_6$ over $\FF_3$ is defined by the Weierstrass equation $\underline Y^2=\underline X^3+2\underline X$, and
 has class group $\Cl^0 X_6\simeq(\ZZ/2\ZZ)^2=\{x,y,z,w\}$. Here $h_2=4$, and $s_x$, $s_y$, $s_z$ and $s_w$
 are pairwise distinct vertices. For $X_6'=X_6\otimes\FF_9$, $\Cl^0 X_6'\simeq(\ZZ/4\ZZ)^2$, 
 thus $\rquot{\Cl^0 X_6'}{\Cl^0 X_6}\simeq(\ZZ/2\ZZ)^2$, which we represent by $\{0,D_1,D_2,D_3\}$, each of the $D_i$ being of order $2$. 
 Again, by the constraint that weights around each vertex sum up to $4$, we find that $\PBuntr X_6$ contains three different traces 
 of the line bundles corresponding to $D_1$, $D_2$ and $D_3$, which we denote by $t_y$, $t_z$ and $t_w$, 
 and which are connected to $s_y$, $s_z$ and $s_w$, respectively. 
 The graph is illustrated in Figure \ref{ell_h=4_graphB}.
\end{eg}

\bigskip


\part{Applications to automorphic forms}
\label{part_applications}

\bigskip


\section{Automorphic forms as functions on graphs}
\label{section_automorphic}

\noindent 
In this section, we will review the notion of an automorphic form as a function on the vertex set $\PBun_2 X$ of the graphs $\cG_x$. These geometrically defined automorphic forms correspond to unramified automorphic forms for $PGL_2$ over $F$ in the more common adelic language. The geometric viewpoint lets us extract explicit eigenvalue equations from the graphs $\cG_x$, which we will list at the end of this section. This allows us to calculate the space of cusp forms and the space of toroidal automorphic forms in the following sections.

\begin{figure}[htb]
 \begin{center}
  \includegraphics{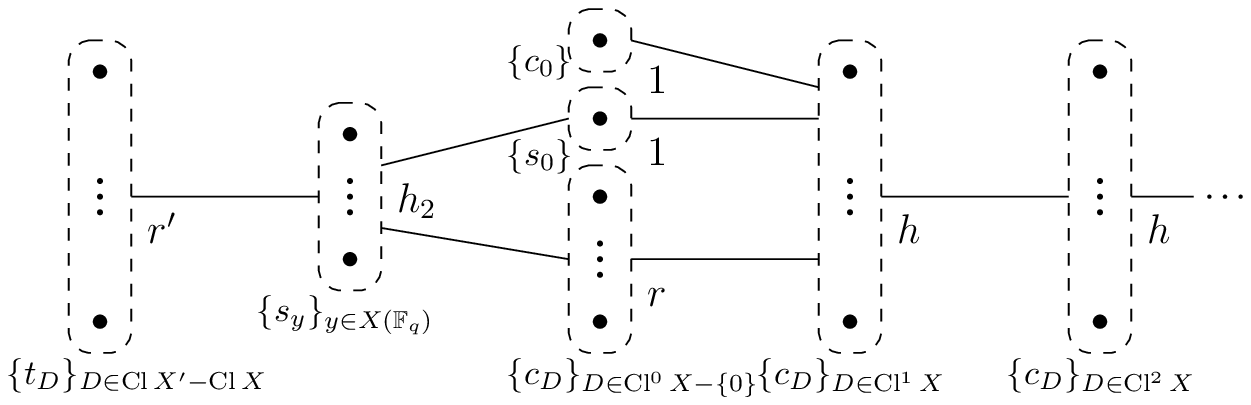}
  \caption{Certain subsets of $\Vertex\cG_x$ and their cardinalities}
  \label{clustergraph_ell}
 \end{center} 
\end{figure}

\begin{pg}
 \label{notation_toroidal_forms_for_genus_1}
 To begin with, we describe a useful picture of the graphs $\cG_x$. Define the following numbers:
 $$\begin{array}{llllll}
    h  \hspace{-7pt}&=\hspace{-7pt}&\#\Cl^0X=\#\{c_D\}_{D\in\Cl^1X},
  & h'  \hspace{-7pt}&=&\hspace{-7pt}\#(\Cl^0X'/\Cl^0X),\vspace{4pt}\\
    h_2\hspace{-7pt}&=\hspace{-7pt}&\#\Cl^0X[2]=\#\{s_y\}_{y\in X(\FF_q)},    
  & h_2'\hspace{-7pt}&=&\hspace{-7pt}\#(\Cl^0X'/\Cl^0X)[2],\vspace{4pt}\\
    r  \hspace{-7pt}&=\hspace{-7pt}&(h+h_2)/2-1=\#\{c_D\}_{D\in\Cl^0X-\{0\}},\hspace{10pt} 
  & r'  \hspace{-7pt}&=&\hspace{-7pt}(h'+h'_2)/2-1=\#\{t_D\}_{D\in\Cl X'-\Cl X}.
 \end{array}$$
 The equality in the definition of $h_2$  follows from Theorem \ref{ell_PBungi} and the equalities in definitions of $h$, $r$ and $r'$ follow from what we explained in paragraph \ref{char_PBundec}. Figure \ref{clustergraph_ell} shows certain subsets of $\Vertex\cG_x$. Each dashed subset of $\Vertex\cG_x$ is defined by the set written underneath. The integer written to the right is its cardinality. A line between two dashed areas indicates that there is at least one edge in $\cG_x$ between two vertices in the corresponding subsets.
\end{pg}

\begin{lemma}
 \label{h'=2(q+1)-h}
 $h' \ = \ 2(q+1)-h$.
\end{lemma}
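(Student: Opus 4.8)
The plan is to relate the class numbers of $X$ and $X'=X\otimes\FF_{q^2}$ via the zeta function of $X$, which for a genus $1$ curve is $Z_X(T)=\frac{P(T)}{(1-T)(1-qT)}$ with $P(T)=qT^2-aT+1$ for some integer $a$ with $|a|\le 2\sqrt q$, and $h=\#\Cl^0 X=P(1)=q+1-a$. The key point is the standard factorization of the zeta function under constant field extension: if $\alpha,\beta$ are the inverse roots of $P(T)$ (so $\alpha\beta=q$ and $\alpha+\beta=a$), then $P_{X'}(T)=(1-\alpha^2 T)(1-\beta^2 T)=q^2T^2-(a^2-2q)T+1$, and hence $h'=\#\Cl^0 X'=P_{X'}(1)=(1-\alpha^2)(1-\beta^2)$.

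First I would write $h=(1-\alpha)(1-\beta)=P(1)$ and compute $h'=(1-\alpha^2)(1-\beta^2)=(1-\alpha)(1-\beta)(1+\alpha)(1+\beta)=h\cdot(1+\alpha)(1+\beta)$. Then $(1+\alpha)(1+\beta)=1+(\alpha+\beta)+\alpha\beta=1+a+q$. Since $a=q+1-h$, this gives $(1+\alpha)(1+\beta)=1+(q+1-h)+q=2(q+1)-h$. At this stage one still has the product formula $h'=h\bigl(2(q+1)-h\bigr)$ rather than the additive formula claimed; so either the lemma as stated is using a different normalization of $h'$ (for instance $h'$ defined as an index or via the norm map, so that $h'=\#(\Cl^0X'/\Cl^0 X)=h'_{\text{tot}}/h$), or one should read $h'$ in paragraph \ref{notation_toroidal_forms_for_genus_1} as $\#(\Cl^0X'/\Cl^0X)$, in which case $h'=\#\Cl^0 X'/\#\Cl^0 X=\bigl(2(q+1)-h\bigr)$ directly, which is exactly the claim.

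So the cleaner route, matching the paper's notation $h'=\#(\Cl^0 X'/\Cl^0 X)$, is: the norm map $N:\Cl^0 X'\to\Cl^0 X$ is surjective (standard, e.g.\ because the corresponding map on Jacobians is surjective with connected kernel, or by class field theory / surjectivity of the transfer on degree-zero divisor classes for a constant field extension), so $\#\Cl^0 X'=\#\Cl^0 X\cdot\#(\ker N)$, while $\Cl^0 X\hookrightarrow\Cl^0 X'$ identifies $\Cl^0 X$ with a subgroup and one checks $\Cl^0 X'/\Cl^0 X\cong\ker N$ via $D\mapsto D-\frac12(\,\cdot\,)$—more simply, just use $h'=\#\Cl^0 X'/\#\Cl^0 X=P_{X'}(1)/P_X(1)$ together with the zeta computation above to get $h'=(1+\alpha)(1+\beta)=q+1+a=2(q+1)-h$.

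The main obstacle is purely bookkeeping: pinning down exactly which group $h'$ denotes (the full class number of $X'$, or the quotient $\Cl^0X'/\Cl^0X$) and, in the quotient case, justifying that $\Cl^0 X'/\Cl^0 X$ has order $\#\Cl^0 X'/\#\Cl^0 X$, i.e.\ that the natural map $\Cl^0 X\to\Cl^0 X'$ is injective (which follows since a degree-zero divisor class on $X$ that becomes principal on $X'$ is fixed by $\Gal(\FF_{q^2}/\FF_q)$ and hence, by Galois descent for the $2$-element group acting on the $1$-dimensional $\FF_q$-space of global sections, was already principal on $X$—or one simply cites the analogous injectivity used implicitly in paragraph \ref{char_PBundec}). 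Once the normalization is fixed, the identity $h'=2(q+1)-h$ is immediate from $P_{X'}(T)=(1-\alpha^2T)(1-\beta^2T)$ evaluated at $T=1$.
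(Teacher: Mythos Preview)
Your argument is correct, and it is essentially the alternative approach the paper sketches in the remark following the lemma: compute $\#\Cl^0 X'$ via the factorization $P_{X'}(T)=(1-\alpha^2T)(1-\beta^2T)$, observe $\#\Cl^0 X'=h\bigl(2(q+1)-h\bigr)$, and then divide by $h$ using that the pullback $\Cl^0 X\hookrightarrow\Cl^0 X'$ is injective, which gives the paper's $h'=\#(\Cl^0X'/\Cl^0X)=2(q+1)-h$.

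The paper's own proof, however, is quite different in spirit: it is a purely combinatorial count inside the graph $\cG_x$. One sums the edge weights around all vertices $s_y$ ($y$ ranging over $\Cl^1X$ modulo $2\Cl^0X$); on one hand each vertex contributes total weight $q+1$, on the other hand Theorem~\ref{str_thm_graphs_for_genus_1} tells you exactly which neighbours $s_0$, $c_{z-x}$, $t_D$ occur and with what weight, and tallying those contributions yields $h_2(q+1)=h_2+(h_2/2)(h-1)+(h_2/2)(h'-1)$. Your zeta-function route is shorter and more classical, and it avoids any reliance on the structure theorem for $\cG_x$; the paper's argument, by contrast, is self-contained within the Hecke-graph framework just developed and serves as a nice consistency check on Theorem~\ref{str_thm_graphs_for_genus_1}. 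One small point: your digression about the norm map and $\ker N$ is unnecessary and slightly muddled---all you need is the injectivity of pullback, which you state correctly at the end.
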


\begin{proof}
 Fix a place $x$ of degree $1$ and consider $\cG_x$. We count the weights around the $h_2$ vertices $s_y$, where $y$ varies through $\Cl^1X$ modulo adding a class in $2\Cl^0X$. We know that the weights around each of the $h_2$ vertices add up to $q+1$. On the other hand, Theorem \ref{str_thm_graphs_for_genus_1} tells us precisely which vertices occur as $\Phi_x$-neighbours of the $s_y$'s and with which weight. We count all weights around the $s_y$'s:
 \begin{itemize}
  \item The vertex $s_0$ occurs with weight $h_2$. 
  \item The vertex $c_{z-x}$ occurs with weight $h_2$ if $z-x\in\Cl^0X-\{0\}$ and $z-x\neq x-z$. 
  \item The vertex $c_{z-x}$ occurs with weight $h_2/2$ if $z-x\in\Cl^0X-\{0\}$ and $z-x=x-z$. 
  \item The vertex $t_D$ occurs with weight $h_2$ if $D\in\Cl X'-\Cl X$ and $2D\notin\Cl X$.
  \item The vertex $t_D$ occurs with weight $h_2/2$ if $D\in\Cl X'-\Cl X$ and $2D\in\Cl X$.
 \end{itemize} 
 Since $c_{z-x}=c_{x-z}$, the sum of the weights of the $c_{z-x}$'s is $(h_2/2)(h-1)$. Since $t_D=t_{-D}$ and $t_D$ depends only on the class of $D$ modulo $\Cl X$, the sum of the weights of the $t_D$'s is $(h_2/2)(h'-1)$. Adding up all these contributions gives
 $$ h_2(q+1) \ = \ h_2 \, + \, (h_2/2)\,(h-1)\, +\, (h_2/2)\,(h'-1) \;, $$
 which implies the relation of the lemma. 
\end{proof}

\begin{rem}
 This result can also be obtained from the equality $\zeta_{F'}(s)=\zeta_F(s)L_F(\chi,s)$, where $\chi=\norm \ ^{\pi\i/\ln q}$ is the Hecke character corresponding to $F'$ by class field theory. These function can be written out explicitly as
 $$ \frac{q^2T^4+(hh'-q^2-1)T^2+1}{(1-T^2)(1-q^2T^2)} \ \ = \ \ \frac{qT^2+(h-q-1)T+1}{(1-T)(1-qT)} \ \cdot \ \frac{qT^2-(h-q-1)T+1}{(1+T)(1+qT)} \;, $$
 where $T=q^{-s}$. Comparing the coefficients of the numerators of these rational functions in $T$ yields an alternative proof of the lemma.
\end{rem}

\begin{pg}
 We review the notion of Hecke operators and automorphic forms in geometric language. Let $\cV$ be the space of complex valued functions on the vertex set $\PBun_2 X$ of the graphs $\cG_x$. The action of the \emph{Hecke operators $\Phi_x$} on $\cV$ is as follows. Given a function $f\in\cV$ and a vertex $v\in\PBun_2 X$, we define 
 $$ \Phi_x(f) (v) \quad = \quad \sum_{\substack{(v,v',m)\,\in\,\Edge\cG_x}} m\cdot f(v'). $$
 The actions of $\Phi_x$ and $\Phi_y$ commute for every $x,y\in\norm X$. The \emph{Hecke algebra} is the free algebra $\cH=\CC[\Phi_x]_{x\in\norm X}$ generated by the Hecke operators $\Phi_x$. It acts on $\cV$ by linear continuation of the action of the $\Phi_x$.

 A function $f\in\cV$ is an \emph{automorphic form} if $f$ is contained in a finite-dimensional $\cH$-invariant subspace of $\cV$. We denote the space of automorphic forms by $\cA$. Note that $f$ is automorphic if and only if $\{\Phi_x^i(f)\mid i\geq0\}$ is contained in a finite-dimensional subspace of $\cV$ for one $x\in\norm X$. Moreover, every $\Phi_x$-eigenspace for a fixed place $x$ has a basis that consists of simultaneous eigenfunctions for all $\Phi\in\cH_K$. We call such a simultaneous eigenfunction an \emph{$\cH$-eigenfunction}.
\end{pg}

\begin{rem}
 An automorphic form as defined here corresponds to an unramified automorphic form for $\PGL_2$ over the function field $F$ and an element $\Phi\in\cH$ corresponds to an unramified Hecke operators for $\PGL_2$. For more details on this correspondence, see section 5 in \cite{Lorscheid2}. Note that there is a digression in notation: all upper and lower indices $K$ are suppressed in this paper. For instance, we write $\cA$ for the space denoted by $\cA^K$ and $\cH$ for the algebra denoted by $\cH_K$ in \cite{Lorscheid2} and \cite{Lorscheid1}.
\end{rem}

\begin{pg}
 \label{eigenvalue_equations_in_vertices_of_the_nucleus}
 Let $f\in\cA$ be an $\cH$-eigenfunction and let $\lambda_x$ be the eigenvalue for $\Phi_x$ where $x\in\norm X$. We can evaluate the eigenvalue equation $ \Phi_x(f)=\lambda_x f $ at each vertex $v$ of $\cG_x$ and for each place $x$ of degree $1$, which yields the following equations for the vertices in the nucleus. (Note that the expressions in the column on the left are labels, which will be used for the purpose of reference.)

 \begin{align*}
  \label{x,t_D} \tag{$x,t_D$} \lambda_x f(t_D) &= (q+1)f(s_{D+\sigma D+x})                        &\hspace{-10cm}\text{for }D\in\Cl  X'-\Cl X,\hspace{0cm}\\
  \label{x,s_0} \tag{$x,s_0$} \lambda_x f(s_0) &= qf(s_x)+f(c_x) \;, \\ 
  \label{x,c_0} \tag{$x,c_0$} \lambda_x f(c_0) &= (q+1)f(c_x) \;, \\ 
  \label{x,c_z-x} \tag{$x,c_{z-x}$} \lambda_x f(c_{z-x}) &= (q-1)f(s_z)+f(c_z)+f(c_{2x-z})        &\hspace{-10cm}\text{for }z\in X(\FF_q)-\{x\},\hspace{0cm}\\
  \label{x,c_x} \tag{$x,c_x$} \lambda_x f(c_x) &= (q-1)f(s_0)+f(c_0)+f(c_{2x}) \;, \\ 
  \label{x,c_z} \tag{$x,c_z$} \lambda_x f(c_z) &= qf(c_{z-x})+f(c_{z+x})                          &\hspace{-10cm}\text{for }z\in X(\FF_q)-\{x\},\hspace{0cm}\\
  \label{x,s_y} \tag{$x,s_y$} \lambda_x f(s_y) &= \alpha f(s_0) + (h_2/2)\hspace{-0,4cm} \sum_{\substack{(z-x)\in\Cl^0 X\\ (z-x)\neq0\\ (z-y)\in2\Cl^0 X}} \hspace{-0,4cm} f(c_{z-x}) +(h_2/2)\hspace{-0,8cm}\sum_{\substack{[D]\in\Cl  X'/\Cl  X\\ [D]\neq\Cl X\\ D-\sigma D+x-y\in2\Cl^0 X}} \hspace{-0,8cm}f(t_D)\\ 
                              \hspace{4cm}\     &&\hspace{-10cm}\text{for }y\in X(\FF_q)\text{, where }\alpha=\left\{\begin{array}{ll} h_2 & \text{if }(y-x)\in2\Cl^0X,\\0&\text{if }(y-x)\notin2\Cl^0X.\end{array}\right.\!\!\!\\
 \end{align*}

 If we add up all the eigenvalue equations evaluated in the vertices $s_y$, where we let $y$ range over all of $X(\FF_q)=\Cl^1 X$, then we obtain that
 \begin{align*}
  \label{x,sum_s_y} \tag{$x,\sum s_y$} \hspace{1,5cm}\sum_{y\in X(\FF_q)}\hspace{-0,1cm}\lambda_y f(s_y)\ &=\ hf(s_0) + (h/2)\hspace{-0,4cm} \sum_{\substack{(z-x)\in\Cl^0 X\\ (z-x)\neq0}} \hspace{-0,4cm} f(c_{z-x})                         + (h/2)\hspace{-0,5cm}\sum_{\substack{[D]\in\Cl  X'/\Cl  X\\ [D]\neq\Cl  X}} \hspace{-0,5cm}f(t_D) \;.
 \end{align*}
\end{pg}


\section{The space of cusp forms}
\label{section_cusp}

\noindent
 In this section, we use our knowledge about the graphs $\cG_x$ to investigate the space $\cA_0$ of cusp forms. This means that we calculate its dimension and determine the (maximal) support of a cusp form. Further we employ a Hecke operator $\Phi_y$ for a place $y$ of degree $2$ to show that a cusp form that is a $\cH$-eigenfunction does not vanish in $c_0$.

\begin{pg}
 A \emph{cusp form} is an automorphic form $f\in\cA$ that satisfies the equation 
 $$ \sum_{\cM\in\Ext^1(\cO_X,\cO_X)} \Phi(f)(\cM) \quad = \quad 0 $$
 for all $\Phi\in\cH$ (cf.\ \cite{Gaitsgory}). We denote the space of cusp forms by $\cA_0$. The space of cusp forms admits a basis of $\cH$-eigenfunctions, and is thus invariant under $\cH$. The support of a cusp form is contained in the set of vertices $v$ with $\delta(v)\leq0$ (cf.\ \cite{Harder-Li-Weisinger} or \cite[par.\ 9.3]{Lorscheid2}). 

 Let $h_2$ and $r'$ be as in the previous section. We obtain the following result.
\end{pg}

\begin{thm}\ 
 \label{dimension_and_eigenvalues_of_cusp_forms}
 \begin{enumerate}
  \item\label{cusp1} The dimension of $\cA_0$ is $r'+1-h_2$. 
  \item\label{cusp2} The support of $f\in\cA_0$ is contained in $\{t_D,s_0,c_0\}_{D\in\Cl X'-\Cl X}$.
  \item\label{cusp3} If $x$ is a place of odd degree, then $\Phi_x(f)=0$.
 \end{enumerate}
\end{thm}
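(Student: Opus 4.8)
The plan is to exploit the very sparse structure of $\cG_x$ near the ``cuspidal support region'' $\{v:\delta(v)\le 0\}$ together with the relations among the eigenvalue equations in paragraph \ref{eigenvalue_equations_in_vertices_of_the_nucleus}. Throughout, fix a cusp form $f\in\cA_0$ that is an $\cH$-eigenfunction, with eigenvalue $\lambda_x$ for $\Phi_x$. We already know $\supp f\subseteq\{v:\delta(v)\le 0\}$, and from the classification in paragraph \ref{char_PBundec} and Theorem \ref{ell_PBungi} the vertices with $\delta(v)\le 0$ are exactly $c_0$, $s_0$, the $t_D$ for $D\in\Cl X'-\Cl X$, and the $s_y$ for $y\in\Cl^1 X$ (the latter have $\delta=-1$, while $c_x$ and the $c_{z-x}$ already have $\delta\ge 1$ except for $c_0$). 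Part \eqref{cusp2} will follow once I show $f(s_y)=0$ for all $y\in\Cl^1 X$: the cusp-form defining relation $\sum_{\cM\in\Ext^1(\cO_X,\cO_X)}\Phi(f)(\cM)=0$ applied with $\Phi=\id$ gives $f(s_0)+f(c_0)=0$ (the sum over that one-dimensional $\Ext$ running over $c_0$ and $s_0$, each once), which is a relation but not a vanishing; the actual shape of the support as stated is just the list of vertices with $\delta\le 0$ minus the $s_y$'s, so the content is the vanishing at the $s_y$. To get this I would use equation \eqref{x,t_D}: $\lambda_x f(t_D)=(q+1)f(s_{D+\sigma D+x})$ for all degree-$1$ places $x$; combined with \eqref{x,c_0}, $\lambda_x f(c_0)=(q+1)f(c_x)=0$ since $\delta(c_x)\ge 1$, so $\lambda_x f(c_0)=0$ for every degree-$1$ $x$, and similarly \eqref{x,c_x} reads $\lambda_x f(c_x)=0=(q-1)f(s_0)+f(c_0)+f(c_{2x})$, i.e. $f(c_0)=-(q-1)f(s_0)$, refining the relation above to $q f(s_0)=0$ hence $f(s_0)=0$ and $f(c_0)=0$ — wait, that already kills $c_0$ and $s_0$; so the genuinely remaining support is $\{t_D\}$, and $f(s_y)=0$ will drop out of \eqref{x,t_D} read backwards (choosing $x$ so that $D+\sigma D+x$ realizes a given $s_y$, using $\lambda_x f(t_D)=(q+1)f(s_y)$ together with the already-established constraints). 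I would organize \eqref{cusp2} as: first $f(c_0)=f(s_0)=0$ via \eqref{x,c_0},\eqref{x,c_x}; then from \eqref{x,s_0} get $q f(s_x)=-f(c_x)=0$, so $f(s_x)=0$ for every $x\in X(\FF_q)$; this finishes \eqref{cusp2}, leaving $\{t_D\}$, and incidentally already forces $\dim\cA_0\le r'$ — the refinement to $r'+1-h_2$ in \eqref{cusp1} then comes from the $h_2$ independent linear constraints among the $f(t_D)$ imposed by equations \eqref{x,s_y}/\eqref{x,sum_s_y} with the left side zero, i.e. $(h/2)\sum_{[D]}f(t_D)=0$ and its refinements, minus one dimension recovered — but \eqref{cusp1} and \eqref{cusp3} are the statements I am asked to prove, so I focus on \eqref{cusp3} below; the bookkeeping for \eqref{cusp1} is of the same flavour.

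For part \eqref{cusp3} — vanishing of $\Phi_x(f)$ for $x$ of odd degree — I would proceed by reduction to the known support and a parity/counting argument on the graph $\cG_x$ for such $x$. The key point is that $f$ is supported on $\{t_D\}_{D\in\Cl X'-\Cl X}$ (by \eqref{cusp2}, already proved), and each $t_D$ lies in $\PBuntr X$, meaning it is the trace of a line bundle that splits only after the quadratic constant extension $X'$. I would use: (a) the eigenvalue equation $\Phi_x(f)=\lambda_x f$; (b) the fact, from the structure theory, that the $\Phi_x$-neighbours of a $t_D$ with $x$ of odd degree are among vertices $v$ with $\delta(v)=\delta(t_D)\pm\deg x$, i.e. $\delta=-2\pm(\text{odd})$, which is odd, in particular $\ne -2$, hence not a $t_{D'}$; but $f$ vanishes off the $t_{D'}$'s, so the right-hand side of the eigenvalue equation evaluated at a $t_D$ is $\lambda_x f(t_D)$ while a direct neighbour-sum would force it to equal a sum of $f$-values at non-$t$ vertices, all zero — hence $\lambda_x f(t_D)=0$ for all $D$, so either $\lambda_x=0$ or $f\equiv 0$; in the former case $\Phi_x f=\lambda_x f=0$ and in the latter trivially so. More carefully: for $x$ of odd degree, $\delta(t_D)+\deg x$ and $\delta(t_D)-\deg x=-2\pm\text{odd}$ are both odd, and the $\Phi_x$-neighbours of $t_D$ therefore have odd $\delta$; since all vertices with $\delta\le -2$ are the $t_{D'}$ (with even $\delta=-2$, by paragraph \ref{char_PBundec}) these neighbours have $\delta\ge -1$, and applying $\Phi_x$ to $f$ at $t_D$ gives $\lambda_x f(t_D)=\sum m\cdot f(\text{neighbour})$; but a cusp form is supported on $\delta\le 0$, and I will need that these neighbours in fact avoid the support entirely — the $\delta=-1$ neighbours are $s_y$-type or again... here I must be careful because $s_0, s_y$ have $\delta=-1$. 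Since $f$ vanishes at all $s_0,s_x$ (established above), the only way the neighbour-sum is nonzero is through vertices with $\delta\ge 1$, where $f$ vanishes anyway; so $\lambda_x f(t_D)=0$, finishing \eqref{cusp3}.

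The main obstacle, and the step I would spend the most care on, is \emph{item \eqref{cusp3} for $x$ of higher odd degree} ($\deg x\ge 3$): the structure theorem \ref{str_thm_graphs_for_genus_1} and the explicit eigenvalue equations in paragraph \ref{eigenvalue_equations_in_vertices_of_the_nucleus} are worked out only for degree-$1$ places, so I cannot simply read off the neighbours of $t_D$ in $\cG_x$. What I actually need is the general structural input from \cite{Lorscheid2}: that $\Phi_x$-neighbours $v'$ of $v$ satisfy $\delta(v')\in\{\delta(v)+\deg x,\delta(v)+\deg x-2,\dots,\delta(v)-\deg x\}$ with the same parity as $\delta(v)+\deg x$ — i.e. all of the same parity — so for $\deg x$ odd all neighbours of a $\delta=-2$ vertex have odd $\delta$, in particular $\ge -1$. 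Combined with the vanishing of $f$ on all vertices of $\delta\ne -2$ among $\{\delta\le 0\}$ (namely $c_0,s_0,s_y$, shown above) and on all $\delta>0$ (cusp support bound), the neighbour-sum collapses to $0$, forcing $\lambda_x f(t_D)=0$ for all $D$, hence $\Phi_x f=\lambda_x f$ is either $0$ or a nonzero multiple of a function supported on $\{t_D\}$ — and $\lambda_x=0$ unless $f\equiv0$. I would double-check the parity claim against \cite[Lemma 8.2]{Lorscheid2} (which gives the step $\pm1$ for $\deg x=1$) and its iterate/generalization; this is the place where a hidden subtlety (e.g. whether $s_y$ with $\delta=-1$ can genuinely be a neighbour and whether $f$ really vanishes there — it does, by the argument above) must be pinned down.
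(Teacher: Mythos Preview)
Your argument for part \eqref{cusp2} contains two genuine errors that break the proof. First, you claim that the vertices with $\delta\le 0$ are exactly $c_0$, $s_0$, the $t_D$, and the $s_y$, asserting that ``$c_x$ and the $c_{z-x}$ already have $\delta\ge 1$ except for $c_0$.'' This is false: for $z\neq x$ with $z\in\Cl^1 X$ one has $z-x\in\Cl^0 X$ and hence $\delta(c_{z-x})=0$. These $r$ vertices must be dealt with (the paper does so via equation \eqref{x,c_z}, which gives $0=qf(c_{z-x})$ immediately). Second, and more seriously, your computation of the cusp condition $\sum_{\cM\in\Ext^1(\cO_X,\cO_X)}f(\cM)=0$ is wrong: the one-dimensional $\FF_q$-vector space $\Ext^1(\cO_X,\cO_X)$ has $q$ elements, the zero extension giving $c_0$ and the $q-1$ nonzero ones each giving $s_0$, so the relation is $(q-1)f(s_0)+f(c_0)=0$ --- exactly the same relation that \eqref{x,c_x} yields. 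You therefore cannot combine them to conclude $f(s_0)=f(c_0)=0$. In fact this conclusion is \emph{false}: the statement of the theorem explicitly keeps $s_0$ and $c_0$ in the support, and Proposition \ref{cusp_eigenfunction_does_not_vanish_in_c_0} later shows that a cuspidal $\cH$-eigenform never vanishes at $c_0$. Once $f(s_0)=0$ collapses, so does your derivation of $f(s_x)=0$ from \eqref{x,s_0}, and your dimension count $\dim\cA_0\le r'$ (which would contradict the theorem when $h_2<1$, and more to the point is based on the wrong support) also fails.

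The paper's route avoids all of this by first proving $\lambda_x=0$ for every degree-$1$ place $x$: assuming $\lambda_x\neq 0$, one successively deduces from \eqref{x,c_0}, \eqref{x,c_x}, \eqref{x,c_z}, \eqref{x,s_0}/\eqref{x,c_z-x}, and \eqref{x,t_D} that $f$ vanishes everywhere, a contradiction. With $\lambda_x=0$ in hand, equation \eqref{x,t_D} immediately gives $f(s_y)=0$, and one is left with the single relation $(q-1)f(s_0)+f(c_0)=0$ together with the $h_2$ independent relations coming from \eqref{x,s_y}; this yields the dimension $r'+2-(h_2+1)=r'+1-h_2$. Your parity argument for \eqref{cusp3} is fine in spirit and matches the paper's once \eqref{cusp2} is correctly established: since the support lies in vertices of even $\delta$ and $\Phi_x$-neighbours differ in parity when $\deg x$ is odd (this is indeed the content of \cite[Lemma 8.2]{Lorscheid2}), $\Phi_x(f)$ is supported on odd-$\delta$ vertices; but $\Phi_x(f)\in\cA_0$, so by \eqref{cusp2} its support also lies in even-$\delta$ vertices, hence $\Phi_x(f)=0$.
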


\begin{proof}
 \label{cusp_forms_for_genus_1}
 Let $f$ be an $\cH$-eigenform, which means, in particular, that $f$ is not trivial, and $\lambda_x$ be the eigenvalue for $\Phi_x$ where $x\in\norm X$. We first show that $\lambda_x=0$ if $x$ is of degree $1$.
 
 Assume that $\lambda_x\neq 0$, then we conclude successively:
 \begin{itemize}
  \item $f(c_0)=0$\quad by equation \eqref{x,c_0}.
  \item $f(s_0)=0$\quad by equation \eqref{x,c_x}.
  \item $f(c_{z-x})=0$\quad for all places $z\neq x$ of degree $1$ by equation \eqref{x,c_z}.
  \item $f(s_y)=0$\quad for all places $y$ of degree $1$ by equations \eqref{x,s_0} and \eqref{x,c_z-x}.
  \item $f(t_D)=0$\quad for all $D\in\Cl X'-\Cl X$ by equation \eqref{x,t_D}.
 \end{itemize}
 Thus $f$ must be trivial, which contradicts our assumption on $f$. This shows that $\lambda_x=0$ for all places $x$ of degree $1$. We make the following successive conclusions:
 \begin{itemize}
  \item $f(s_y)=0$\quad for all places $y$ of degree $1$ by equation \eqref{x,t_D}.
  \item $f(c_{z-x})=0$\quad for all places $z\neq x$ of degree $1$ by equation \eqref{x,c_z}.
  \item $f(c_0)+(q-1)f(s_0)=0$\quad by equation \eqref{x,c_x}.
  \item $ \alpha f(s_0) + (h_2/2)\hspace{-0,7cm}\sum\limits_{\substack{[D]\in\Cl X'/\Cl X\\ [D]\neq\Cl X\\ D-\sigma D+x-y\in2\Cl^0X}}\hspace{-0,7cm}f(t_D)=0 $\quad for all places $y$ of degree $1$ by equation \eqref{x,s_y}, where $\alpha=h_2$ if $(y-x)\in2\Cl^0 X$ and $\alpha=0$ otherwise.
 \end{itemize} 
 This means that the support of $f$ is contained in $\{t_D,s_0,c_0\}_{D\in\Cl X'-\Cl X}$, which proves \eqref{cusp2}.
 
 We have $h_2+1$ linearly independent equations for $f$ as described by the last two lines of the above list. There are no further restrictions on the values of $f$ given by the eigenvalue equations since equation \eqref{x,c_0} becomes trivial. Hence the dimension of the $0$-eigenspace of $\Phi_x$ in $\cA_0$ equals 
 $$ \#\{t_D,s_0,c_0\}_{D\in\Cl X'-\Cl X} -(h_2+1) \ = \ (r'+2)-(h_2+1) \ = \ r'+1-h_2 \;. $$
 Since there are no other eigenvalues for cusp forms, $\cA_0$ equals the $0$-eigenspace, which proves \eqref{cusp1}.

 Assertion \eqref{cusp3} follows since the support of $f$ contains only vertices $v$ with $\delta(v)$ even and the parity of two $\Phi_x$-neighbours is different if $x$ is of odd degree (cf.\ \cite[Lemma 8.2]{Lorscheid2}). This implies that $\Phi_x(f) = 0$ for every place $x$ of odd degree, which proves \eqref{cusp3}. 
\end{proof}

\begin{rem}
 The dimension can be calculated by other methods, too. Once the vertices of the graphs $\cG_x$ are determined, one can use theta series to calculate the dimension, cf.\ \cite[Satz 3.3.2]{Schleich} and \cite[Thm.\ 5.1]{Harder-Li-Weisinger}.

\begin{comment}
 As Bas Edixhoven pointed out to me, it can also be calculated by making use of the Langlands correspondence. Namely, the dimension of the space of cusp forms equals the number of irreducible Galois representations of the fundamental group $G$ of $X$ in $\SL_2(\CC)$, which is the Langlands dual of $\PGL_2$. 

 This number can be calculated as follows. The profinite group $G$ is a semi-direct product of the Tate-Shafarevich group $\Sha$ of $X$ with the projective limit $H=\lim X(\overline\FF_q)[n]$ of the $n$-torsion points of $X$. Let $V$ be a finite-dimensional complex representation of $G$. As representation of $H$, $V$ decomposes into a direct sum of subrepresentations $V_\chi$ for characters $\chi\in\widehat H$. The action of $\Sha$ permutes the $V_\chi$ and thus acts on $\widehat H$. The representation $V$ of $G$ is irreducible if and only if there is only one $\Sha$-orbit in $\widehat H$ for that the spaces $V_\chi$ are non-trivial and the non-trivial $V_\chi$ are $1$-dimensional. 

 Looking at $2$-dimensional $V$, this means that we search for $\chi\in\widehat H$ that lie in a $\Sha$-orbit of length $2$. This are characters $\chi$ of $X(\FF_{q^2})$ with $\chi\circ F\neq \chi$ where $F$ is the Frobenius over $\FF_q$. For that the image is contained in $\SL_2(\CC)$, we must have $\chi\circ F=\chi^{-1}$. For each pair $\{\chi,\chi^{-1}\}$, there is, up to isomorphism, precisely one irreducible representation $V$ with determinant $1$, where the eigenvalues of $F$ are $i$ and $-i$. Thus the number of irreducible Galois representations of the fundamental group $G$ of $X$ in $\SL_2(\CC)$ equals half the number of elements in 
 $$ \{\ \chi\in \widehat{X(\FF_{q^2}})\ \mid\ \chi\circ F=\chi^{-1}\neq\chi\ \}. $$
 This set has the same cardinality as the set of divisors $D\in\Cl^0X'$ with trace $\tr(D)=0$, but $D\notin\Cl^0X'[2]$. The kernel of $\tr:\Cl^0X'\to\Cl^0X$ has $h'$ elements since the trace is surjective, which can be seen, for instance, by class field theory. We have to subtract the elements that lie in $\Cl^0X'[2]$. Since $2(D+\sigma D)=2D+2\sigma D=0$, the image of $\Cl^0X'[2]$ under $\tr$ is contained in $\Cl^0X[2]$. Each of $\Cl^0X[2]$ and $\Cl^0X'[2]$ has cardinality $1$, $2$ or $4$, so the kernel of the restriction $\tr:\Cl^0X'[2]\to\Cl^0X[2]$ has cardinality $n h_2'$ where $n$ is equal to $1$, $2$ or $4$. So the number of elements in the above set is $h'-n h_2'$. Since $2\dim\cA_0=2(r'+1-h_2)=h'+h_2'-2h_2$ must be equal to this number, we conclude that $(n+1) h_2'=2h_2$, which is only possible if $n=1$. Thus $h'_2=h_2$.

\end{rem}

\begin{prop}
 \label{cusp_eigenfunction_does_not_vanish_in_c_0}
 If $f\in\cA_0$ is an $\cH$-eigenfunction, then $f(c_0)\neq0$.
\end{prop}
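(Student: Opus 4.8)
The plan is to argue by contradiction, assuming $f(c_0)=0$. By Theorem \ref{dimension_and_eigenvalues_of_cusp_forms}\,\eqref{cusp2} the support of $f$ is contained in $\{t_D,s_0,c_0\}_{D\in\Cl X'-\Cl X}$, and from the relation $f(c_0)+(q-1)f(s_0)=0$ established in the proof of that theorem (the consequence of equation \eqref{x,c_x}) we get $f(s_0)=0$ as well; thus $\supp f\subseteq\PBuntr X=\{t_D\}_{D\in\Cl X'-\Cl X}$. So it suffices to show that $f$ vanishes on every trace bundle $t_D$: then $f\equiv0$, contradicting that $f$ is a non-trivial $\cH$-eigenfunction. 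Since the eigenvalue equations of section \ref{section_automorphic} only cover degree $1$ places, I would bring in a Hecke operator $\Phi_y$ for a place $y$ of degree $2$.

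The one thing that is not yet available is the local shape of $\cG_y$ at $c_0$, i.e.\ the set $\cU_y(c_0)$, which I would compute directly from the definition. The exact sequences $0\to\cM''\to\cO_X\oplus\cO_X\to\cK_y\to0$ with fixed middle term correspond to the $\kappa_y$-lines in the $2$-dimensional $\kappa_y$-space $(\cO_X\oplus\cO_X)\otimes\kappa_y$, i.e.\ to the $q^2+1$ points of $\PP^1(\kappa_y)$ (matching the total weight $q_y+1=q^2+1$). Let $y$ pull back to $P+\sigma P$ on $X'=X\otimes\FF_{q^2}$ with $P\neq\sigma P$. A short case distinction should give: the $q+1$ lines rational over $\FF_q$ produce $\cM''\simeq\cO_X\oplus\cL_{-y}$, which represents $c_y$; the remaining $q^2-q$ lines produce an $\cM''$ whose base change to $X'$ is $\cL_{-P}\oplus\cL_{-\sigma P}$ (seen via a non-$\FF_q$-rational change of basis that diagonalises both modifications). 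This latter $\cM''$ is indecomposable over $X$ because $P\not\sim\sigma P$ on the elliptic curve, and it has $\delta=-2$ (no line subbundle of degree $-1$ exists, even after base change to $X'$), so it represents the trace bundle $t_{x-P}$. Hence $\cU_y(c_0)=\{(c_0,c_y,q+1),\,(c_0,t_{x-P},q^2-q)\}$; in particular $c_0$ has exactly one trace-bundle neighbour in $\cG_y$, namely $t_{x-P}$.

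With this in hand the rest is formal. Evaluating $\Phi_y(f)(c_0)=\lambda_y f(c_0)$: the right-hand side vanishes by assumption, while on the left $f(c_y)=0$ because $\delta(c_y)=2>0$ and a cusp form is supported in $\{v\mid\delta(v)\le0\}$; what remains is $(q^2-q)f(t_{x-P})=0$, so $f(t_{x-P})=0$. As $y$ runs over all degree $2$ places of $X$, the point $P$ runs over $X'(\FF_{q^2})\setminus X(\FF_q)$, so $x-P$ runs over $\Cl^0X'-\Cl^0X$ and $t_{x-P}$ over all of $\PBuntr X$; therefore $f$ vanishes on $\PBuntr X$, forcing $f\equiv0$ — the desired contradiction. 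The only genuine work, and the step I expect to be the main obstacle, is the computation of $\cU_y(c_0)$ in the second paragraph: one must verify that the $q^2-q$ non-rational modifications of $\cO_X\oplus\cO_X$ all land in the single isomorphism class $t_{x-P}$ and that this class has $\delta=-2$ (rather than $0$); everything else is a direct manipulation of the eigenvalue equations from Theorem \ref{dimension_and_eigenvalues_of_cusp_forms}.
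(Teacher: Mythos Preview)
Your overall strategy is the paper's: assume $f(c_0)=0$, deduce $f(s_0)=0$ from equation \eqref{x,c_x}, reduce to $\supp f\subseteq\PBuntr X$, and then use a degree-$2$ Hecke operator $\Phi_y$ to kill each $f(t_D)$. The genuine difference is in how $\cU_y(c_0)$ is obtained. You compute it directly by parameterising the modifications of $\cO_X\oplus\cO_X$ at $y$ via $\PP^1(\kappa_y)$ and separating the $q+1$ lines rational over $\FF_q$ from the $q^2-q$ non-rational ones. The paper instead pulls the sequence $0\to\cM'\to\cM\to\cK_y\to0$ back along $p\colon X'\to X$, where $p^\ast\cK_y\simeq\cK_z\oplus\cK_{\sigma z}$ and the modification factors as a $\Phi_z$-step followed by a $\Phi_{\sigma z}$-step; then the already-established degree-$1$ graphs on $X'$ give $c'_{z+\sigma z}=p^\ast(c_y)$ and $c'_{z-\sigma z}=p^\ast(t_z)$ as the only possible targets, and \cite[Thm.\ 8.5]{Lorscheid2} supplies the multiplicity $q+1$ for $c_y$, leaving $q^2-q$ for $t_z$. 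Your approach is more self-contained; the paper's recycles the degree-$1$ theory (Theorem~\ref{str_thm_graphs_for_genus_1} applied over $X'$) and avoids any fresh case analysis.

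One slip to fix: your parenthetical ``no line subbundle of degree $-1$ exists, even after base change to $X'$'' is false---over $X'$ the bundle \emph{is} $\cL_{-P}\oplus\cL_{-\sigma P}$, and its two summands are degree-$(-1)$ subbundles. The correct reason $\delta(\cM'')=-2$ over $X$ is that these are the \emph{only} degree-$(-1)$ subbundles over $X'$ and neither is Galois-invariant (since $P\neq\sigma P$), so none descends to $X$. Alternatively, once you know $[\cM'']\in\PBuntr X$, paragraph~\ref{char_PBuntr} already gives $\delta$ even and negative, hence $=-2$. This does not affect the conclusion.
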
 

\begin{proof}
 Let $f\in\cA_0$ be an $\cH$-eigenfunction with $\Phi_x$-eigenvalue $\lambda_x$ such that $f(c_0)=0$. We will deduce that $f$ must be the zero function, which contradicts the assumption that $f$ is an $\cH$-eigenfunction. This will prove the lemma.
 
 First we conclude from Theorem \ref{dimension_and_eigenvalues_of_cusp_forms} and equation \eqref{x,c_x} that $f(s_0)=0$. The only other vertices that are possibly contained in the support of $f$ are of the form $t_D$ for a $D\in\Cl X'-\Cl X$. We fix an arbitrary $D\in\Cl X'-\Cl X$ for the rest of the proof and show that $F(t_D)=0$.
 
 Since $X'(\FF_{q^2})=\Cl^1X'$ maps surjectively to $\Cl X'/\Cl X$, and $t_D$ only depends on the class $[D]\in\Cl X'/\Cl X$, there is a $z\in X'(\FF_{q^2})$ such that $t_D=t_z$. The covering $p:X'\to X$ maps $z$ as well as its Galois conjugate $\sigma z$ to a place $y\in\norm X$ of degree $2$. As classes in $\Cl X'$, we have $y=z+\sigma z$.

 In the following, we will investigate the graph of the Hecke operator $\Phi_y$ with the help of the graphs of the Hecke operators $\Phi_z$ and $\Phi_{\sigma z}$, which are defined over $F'$. Recall from \cite[Lemma 6.5]{Lorscheid2} that the map $p^\ast:\PBun_2X\to\PBun_2X'$ restricts to an injective map 
 $$ p^\ast: \ \PBundec X \, \amalg \, \PBuntr X \ \longhookrightarrow \ \PBundec X' \;, $$ 
 and $p^\ast$ maps $\PBungi X$ to $\PBungi X'$. We will denote the elements in $\PBundec X'$ by $c'_D$ with $D\in\Cl X'$. Then we have in particular that $c'_0=p^\ast(c_0)$, that $c'_{z+\sigma z}=p^\ast(c_y)$ and that $c'_{z-\sigma z}=p^\ast(t_z)$, and in each case, there is no other vertex in $\PBun_2 X$ that is mapped to $c'_0$, $c'_{z+\sigma z}$, and $c'_{z-\sigma z}$, respectively.

 Recall from paragraph \ref{def_hecke_graphs} that $\cK_y$ denotes the sheaf on $X$ that is supported at $y$ with stalk $\kappa_y$. If we denote by $\cK_z$ and $\cK_{\sigma z}$ the corresponding sheaves on $X'$, we have that $p^\ast\cK_y\simeq\cK_z\oplus\cK_{\sigma z}$.

 Let $\cM,\cM'\in\Bun_2 X$ fit into an exact sequence
 $$ \ses{\cM'}{\cM}{\cK_y} \;. $$
 Extension of constants is an exact functor, thus we obtain an exact sequence
 $$ \ses{p^\ast\cM'}{p^\ast\cM}{\cK_z\oplus\cK_{\sigma z}} \;, $$
 which splits into two exact sequences
 $$ 0\to\cM''\to p^\ast\cM\to\cK_z\to0 \hspace{1cm} \textrm{and} \hspace{1cm} 0\to p^\ast\cM'\to\cM''\to\cK_{\sigma z}\to0 \;, $$
 where $\cM''\in\Bun_2 X'$ is the kernel of $p^\ast\cM\to\cK_z$.
  
 In the language of graphs, this means that for every edge 
 \begin{center} \includegraphics{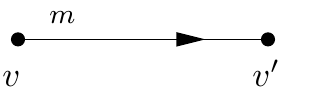} \end{center}
 between vertices $v,v'\in\PBun_2 X$ in $\Edge\cG_y$, there are a vertex $v''\in\PBun_2 X'$, and edges
 $$ \begin{array}{ccc}
     \includegraphics{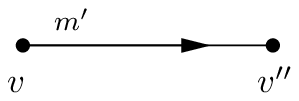}&& \includegraphics{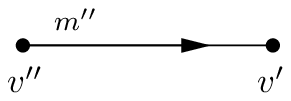} \vspace{-0,9cm}\\
     &\text{and}\vspace{0,5cm}
    \end{array} $$
 in $\Edge\cG_z$ and $\Edge\cG_{\sigma z}$, respectively.
 
 We apply this observation to find out all possibilities of $\Phi_y$-neighbours of $c_0$. The only $\Phi_z$-neighbour of $c_0$ is $c_z$, and since $z\neq \sigma z$, the $\Phi_{\sigma z}$-neighbours of $c_z$ are $c_{z-\sigma z}=p^\ast(t_z)$ and $c_{z+\sigma z}=p^\ast(c_y)$. This means that the only possible $\Phi_y$-neighbours of $c_0$ are $t_z$ and $c_y$. The vertex $c_y$ has multiplicity $q+1$ by \cite[Thm.\ 8.5]{Lorscheid2}. Thus the neighbour $t_z$ has multiplicity $(q^2+1)-(q+1)=q^2-q$. Hence $\cU_y(c_0)$ can be illustrated as
 \begin{center} \includegraphics{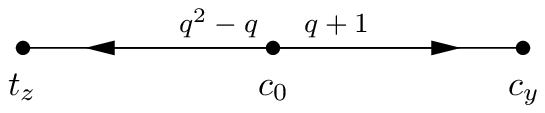} \end{center}
 
 By the assumptions on $f$, it vanishes both at $c_0$ and at $c_y$. Thus the eigenvalue equation
 $$ \lambda_f(\Phi_y)\,f(c_0) \ = \ (q+1)\,f(c_y) + (q^2-q)\,f(t_z) $$
 implies that $f(t_D)=f(t_z)=0$, which completes the proof.
\end{proof} 


\section{The space of toroidal automorphic forms}
\label{section_toroidal}

\noindent
 In this section, we calculate the space of toroidal automorphic forms. We start with $F'$-toroidal automorphic forms, which have a particular nice geometric description, and prove in particular hat there are no non-trivial  $F'$-toroidal cusp forms. From this result together with the investigation of the zeros of certain $L$-functions, we shall show that the space of toroidal automorphic forms is generated by a single Eisenstein series $E(\blanc,s)$ where $s+1/2$ is a zero of the zeta function of $F$---with the exception that the characteristic of $F$ is $2$ and the class number $h$ of $F$ equals $q+1$ where the space of toroidal automorphic forms might be $2$-dimensional.

\begin{pg}
 Let $F'$ be the function field of $X'$. An automorphic form $f\in\cA$ is \emph{$F'$-toroidal} if 
 $$\sum_{\substack{[D]\in\Cl  X'/\Cl  X}} \hspace{-0,2cm} \Phi(f)(t_D) \ = \ 0 \;. $$
 for all $\Phi\in\cH$ (cf.\ \cite[Thm.\ 10.8]{Lorscheid2}). We denote the space of all $F'$-toroidal automorphic forms by $\cA_\tor(F')$. This space is a $\cH$-invariant subspace of $\cA$, and it decomposes into a direct sum of the following three $\cH$-invariant subspaces: the space $\cA_{0,\tor}(F')$ of $F'$-toroidal cusp forms, the space $\cE_\tor(F')$ that is generated by $F'$-toroidal derivatives of Eisenstein series and the space $\cR_\tor(F')$ that is generated by toroidal derivatives of residual Eisenstein series (see sections 6 and 7 in \cite{Lorscheid1} for the definition of the derivative of a (residual) Eisenstein series; we shall also give a brief description in paragraph \ref{intro_eisenstein}).

 We shall investigate these subspaces in the following. We begin with $\cA_{0,\tor}(F')$. Since $t_D=c_0$ if $D\in\Cl X$, the defining equations for $F'$-toroidality contain in particular the equation
 \begin{align*} 
  \label{T}\tag{$T$} f(c_0) \ + \ \hspace{-0,2cm} \sum_{\substack{[D]\in\Cl  X'/\Cl  X\\ [D]\neq\Cl X}} \hspace{-0,2cm} f(t_D) \ = \ 0 \;. 
 \end{align*}
 for the unit $\Phi=1$ in $\cH$.
\end{pg}

\begin{thm}
 \label{no_toroidal_cusp_forms_for_genus_1}
 Let $F'=\FF_{q^2}F$ be the constant field extension of $F$. Then the space of unramified $F'$-toroidal cusp forms is trivial.
\end{thm}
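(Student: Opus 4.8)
The plan is to reduce to an $\cH$-eigenform and then play three linear relations among the values $f(c_0)$, $f(s_0)$ and the $f(t_D)$ against each other. Since the space of cusp forms carries a basis of $\cH$-eigenfunctions and $\cA_{0,\tor}(F')$ is an $\cH$-invariant subspace of it, it suffices to show that any $\cH$-eigenform $f\in\cA_{0,\tor}(F')$ vanishes identically; as an eigenform is non-trivial, this is a contradiction and proves the theorem. Write $\lambda_x$ for the $\Phi_x$-eigenvalue of $f$. From Theorem \ref{dimension_and_eigenvalues_of_cusp_forms} and its proof I would import three facts for free: $\lambda_x=0$ for every place $x$ of degree $1$; the support of $f$ is contained in $\{t_D,s_0,c_0\}_{D\in\Cl X'-\Cl X}$, so in particular $f(s_y)=0$ for all $y\in X(\FF_q)$ and $f(c_{z-x})=0$ for all degree-$1$ places $z\neq x$; and, reading \eqref{x,c_x} with $\lambda_x=0$, the relation $f(c_0)+(q-1)f(s_0)=0$.

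Next I would extract two further relations. First, $F'$-toroidality applied to the unit $\Phi=1$ of $\cH$ is precisely \eqref{T}, namely $f(c_0)+\sum_{[D]\neq\Cl X}f(t_D)=0$. Second, I would feed the above vanishings into the summed eigenvalue equation \eqref{x,sum_s_y}: its left-hand side $\sum_{y\in X(\FF_q)}\lambda_y f(s_y)$ is zero because every $\lambda_y$ vanishes, while on the right-hand side every term $f(c_{z-x})$ with $z\neq x$ vanishes, so after dividing by the positive number $h/2$ the equation collapses to $2f(s_0)+\sum_{[D]\neq\Cl X}f(t_D)=0$.

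Subtracting the collapsed \eqref{x,sum_s_y} from \eqref{T} gives $f(c_0)=2f(s_0)$; combined with $f(c_0)=-(q-1)f(s_0)$ this forces $(q+1)f(s_0)=0$, hence $f(s_0)=0$ and therefore $f(c_0)=0$. But Proposition \ref{cusp_eigenfunction_does_not_vanish_in_c_0} asserts that a cuspidal $\cH$-eigenform cannot vanish at $c_0$, so $f=0$, the desired contradiction. The only care needed is in the first step — checking that each vanishing statement is genuinely licensed by Theorem \ref{dimension_and_eigenvalues_of_cusp_forms} and its proof (and that $q+1\neq 0$, $h/2\neq 0$) — after which the argument is nothing more than the combination of the three displayed linear equations, so I do not expect a serious obstacle.
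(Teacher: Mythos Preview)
Your argument is correct and follows essentially the same route as the paper: reduce to an $\cH$-eigenform, use Theorem~\ref{dimension_and_eigenvalues_of_cusp_forms} to kill the support outside $\{t_D,s_0,c_0\}$ and obtain $(q-1)f(s_0)+f(c_0)=0$ from \eqref{x,c_x}, collapse \eqref{x,sum_s_y} to $2f(s_0)+\sum_{[D]\neq\Cl X}f(t_D)=0$, subtract \eqref{T} to get $2f(s_0)=f(c_0)$, conclude $f(c_0)=f(s_0)=0$, and finish with Proposition~\ref{cusp_eigenfunction_does_not_vanish_in_c_0}. The only cosmetic difference is that you invoke $\lambda_y=0$ to kill the left side of \eqref{x,sum_s_y}, whereas the paper uses $f(s_y)=0$; either suffices.
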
 

\begin{proof} 
 Since the support of unramified cusp forms is contained in $\PBuntr X\cup\{s_0,c_0\}$ (Theorem \ref{dimension_and_eigenvalues_of_cusp_forms}), after multiplying by $2/h$, equation \eqref{x,sum_s_y} simplifies to
 $$ 0 \ = \ 2\,f(s_0)\ \ +\hspace{-0,4cm}\sum_{\substack{[D]\in\Cl  X'/\Cl  X\\ [D]\neq\Cl  X}} \hspace{-0,3cm}f(t_D) \;. $$
 Subtracting equation \eqref{T} from it yields
 $$ 0 \ = \ 2\,f(s_0)\ -\ f(c_0) \;. $$
 For cusp forms, equation \eqref{x,c_x} reads
 $$ 0 \ = \ (q-1)f(s_0)+f(c_0) $$
 and this implies that $f(c_0)=f(s_0)=0$. Thus Proposition \ref{cusp_eigenfunction_does_not_vanish_in_c_0} implies that $\cA_{0,\tor}=\{0\}$.
\end{proof} 

\begin{rem}
 A formula of Waldspurger (\cite[Prop.\ 7]{Waldspurger2}) for number fields together with certain non-vanishing results implies that a cusp form $f$ which is an $\cH$-eigenfunction is $E$-toroidal if and only if $L(\pi,1/2)\cdot L(\pi\otimes\chi_{E},1/2)=0$ where $E$ is a quadratic field extension of $F$, $\pi$ is the cuspidal representation generated by $f$ and $\chi_{E}$ is the Hecke character associated to $E$ by class field theory (cf.\ section 6 in \cite{Cornelissen-Lorscheid2}). Conjecturally the corresponding fact is also true in the function field case; see \cite{Lysenko} for partial results. 

 In our framework, $E=F'$ and $\chi_{F'}=\norm \ ^{\pi\i/\ln q}$. The case of genus $0$ follows trivially since there are no (unramified) cusp forms. For genus $1$, both $L(\pi,s)$ and $L(\pi\otimes\chi_{F'},s)$ equal $\frac{1}{(1-q^{-s})(1-q^{1-s})}$ and therefore they do not vanish in $s=1/2$. On the other hand, there are no toroidal cusp forms by the previous theorem. Thus the case of genus $1$ follows.
\end{rem}

\begin{pg}
 Before we proceed to determine the rest of $\cA_\tor(F')$, we need some facts on the zeros of the zeta function $\zeta_{F'}$ of $F'$. The zeta function 
 $$ \zeta_F(s)=\frac{qT^2\,+\,\bigl(h\,-\,(q+1)\bigr)T\,+\,1}{(1\,-\,T)\,(1\,-\,qT)} $$
 of $F$ satisfies the functional equation $\zeta_F(1-s)=\zeta_F(s)$. Here $h$ is the class number and $T=q^{-s}$. Thus the zeros $\zeta_F$ come in pairs $\{s,1-s\}$ (modulo $\frac{2\pi\i}{\ln q}\,\ZZ$). Note that this set contains only one element if $s\equiv1-s\pmod{\frac{2\pi\i}{\ln q}\,\ZZ}$. We say that $\{s,1-s\}$ is a pair of zeros of order $n$ if $s$ is a zero of order $n$ in the case that $s\nequiv 1-s\pmod{\frac{2\pi\i}{\ln q}\,\ZZ}$ respective if $n$ is a zero of order $2n$ in the case that $s\equiv 1-s\pmod{\frac{2\pi\i}{\ln q}\,\ZZ}$. Note that in the latter case $s$ is always a zero of even degree (\cite[Lemma 12.1]{Lorscheid1}).

 Since the denominator in the above equation for $\zeta_F(s)$ is a polynomial in $T=q^{-s}$ of degree $2$, we see that $\zeta_F(s)$ has only one pair of zeros (modulo $\frac{2\pi\i}{\ln q}\,\ZZ$), which is of order $1$. The same is true for the $L$-series
 $$ L(\chi_{F'},s)=\frac{qT^2\,-\,\bigl(h\,-\,(q+1)\bigr)T\,+\,1}{(1\,-\,T)\,(1\,-\,qT) }, $$
 namely its pair of zeros is $\{s+\frac{\pi\i}{\ln q},1-s-\frac{\pi\i}{\ln q}\}$ if $\{s,1-s\}$ is the pair of zeros of $\zeta_F(s)$. Here $\chi_{F'}=\norm\ ^{\pi\i/\ln q}$ is the unramified Hecke character associated to $F'$ by class field theory. However, the zeta function $\zeta_{F'}(s)=\zeta_F(s)\cdot L(\chi_{F'},s)$ of $F'$, considered as a function of $s$ modulo $\frac{2\pi\i}{\ln q}\,\ZZ$, has either two different pairs of zeros of order $1$ or one pair of zeros of order $2$.
\end{pg} 

\begin{lemma}
 \label{h=q+1_and_zeta_zeros}
 The following are equivalent.
 \begin{enumerate}
  \item\label{dz1} $\zeta_{F'}$ has a pair of zeros of order $2$.
  \item\label{dz2} $\frac 12+\frac{\pi\i}{2\ln q}$ is a zero of $\zeta_F$.
  \item\label{dz3} $h=q+1$.
 \end{enumerate}
\end{lemma}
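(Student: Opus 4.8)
The plan is to prove the three equivalences by explicit computation with the zeta function and its $L$-series, all expressed as rational functions in $T = q^{-s}$. The key object is the numerator $P(T) = qT^2 + (h-(q+1))T + 1$ of $\zeta_F(s)$, whose two roots (counted with multiplicity, modulo the substitution $T \mapsto q^{-s}$) constitute the pair of zeros of $\zeta_F$, together with the numerator $Q(T) = qT^2 - (h-(q+1))T + 1$ of $L(\chi_{F'},s)$. Since $\zeta_{F'}(s) = \zeta_F(s)\cdot L(\chi_{F'},s)$, the numerator of $\zeta_{F'}$ is $P(T)Q(T)$, and the discussion in the preceding paragraph already records that $\zeta_{F'}$ has either two distinct pairs of zeros of order $1$ or one pair of zeros of order $2$.

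First I would prove \eqref{dz1} $\Leftrightarrow$ \eqref{dz3}. The pair of zeros of $\zeta_F$ is $\{s_0, 1-s_0\}$ where $T_0 = q^{-s_0}$ is a root of $P$; the pair of zeros of $L(\chi_{F'},s)$ is $\{s_0 + \tfrac{\pi\i}{\ln q}, 1 - s_0 - \tfrac{\pi\i}{\ln q}\}$, corresponding to the root $-T_0$ of $Q$ (note $Q(T) = P(-T)$). Thus $\zeta_{F'}$ has a pair of zeros of order $2$ precisely when the multiset $\{s_0, 1-s_0\}$ (mod $\tfrac{2\pi\i}{\ln q}\ZZ$) coincides with $\{s_0 + \tfrac{\pi\i}{\ln q}, 1-s_0-\tfrac{\pi\i}{\ln q}\}$. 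Since these are roots coming from the two quadratics $P$ and $Q = P(-\cdot)$, equality of the two pairs forces either $T_0 = -T_0$, i.e. $T_0 = 0$ (impossible, as $P(0)=1$), or $T_0 = -T_1$ where $T_1 = 1/(qT_0)$ is the other root of $P$ (the product of roots of $P$ is $1/q$). The condition $T_0 = -1/(qT_0)$ gives $qT_0^2 = -1$; substituting into $P(T_0) = 0$ yields $-1 + (h-(q+1))T_0 + 1 = 0$, hence $(h-(q+1))T_0 = 0$, and since $T_0 \neq 0$ this is exactly $h = q+1$. Conversely, if $h = q+1$ then $P(T) = qT^2 + 1$, whose roots are $\pm \i/\sqrt q$, and $Q(T) = P(T)$ has the same roots — so $\zeta_{F'}(s) = \zeta_F(s)^2$ has a pair of zeros of order $2$.

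Next I would prove \eqref{dz2} $\Leftrightarrow$ \eqref{dz3} by direct substitution. Set $s = \tfrac12 + \tfrac{\pi\i}{2\ln q}$, so that $T = q^{-s} = q^{-1/2} \e^{-\pi\i/2} = -\i q^{-1/2}$, hence $T^2 = -1/q$, i.e. $qT^2 = -1$. Then $P(T) = qT^2 + (h-(q+1))T + 1 = -1 + (h-(q+1))T + 1 = (h-(q+1))T$, and since $T \neq 0$ we get $\zeta_F(s) = 0$ (the denominator $(1-T)(1-qT)$ is nonzero at this $T$) if and only if $h = q+1$. This closes the cycle of equivalences.

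I do not expect a serious obstacle here; the argument is a short computation once everything is phrased in terms of $T$. The one point that needs a little care is the bookkeeping in \eqref{dz1} $\Leftrightarrow$ \eqref{dz3}: one must match up correctly which root of $Q$ corresponds to which zero of $L(\chi_{F'},s)$ modulo $\tfrac{2\pi\i}{\ln q}\ZZ$, and argue that a pair of zeros of order $2$ of $\zeta_{F'}$ can only arise from the coincidence of the (order-$1$) pairs of $\zeta_F$ and $L(\chi_{F'},s)$ rather than from either factor having a double root — but the latter is impossible since both $P$ and $Q$ have constant term $1$ and leading coefficient $q$, so a double root would force discriminant $(h-(q+1))^2 - 4q = 0$, i.e. $h = q+1 \pm 2\sqrt q$, which is compatible with a double root only when $q$ is a square, and even then one checks the double root is $\mp q^{-1/2} \in \RR$, giving $s \in \tfrac12 + \tfrac{2\pi\i}{\ln q}\ZZ$ for $P$ (a genuine order-$2$ real zero, still covered by $h = q+1$ only in the excluded square cases) — in any event the clean statement is recovered by simply comparing numerators of $\zeta_{F'}$ as a polynomial in $T$, which I would use as the primary route.
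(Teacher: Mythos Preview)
Your proof is correct and follows essentially the same approach as the paper. The paper proves \eqref{dz1}$\Leftrightarrow$\eqref{dz2} by the one-line observation that the pair of zeros $\{s,1-s\}$ of $\zeta_F$ coincides with the pair $\{s+\tfrac{\pi\i}{\ln q},1-s-\tfrac{\pi\i}{\ln q}\}$ of $L(\chi_{F'},s)$ precisely when $1-s\equiv s+\tfrac{\pi\i}{\ln q}$, which solves to $s=\tfrac12+\tfrac{\pi\i}{2\ln q}$; it then does the same substitution you do for \eqref{dz2}$\Leftrightarrow$\eqref{dz3}. You instead pass to $T$-coordinates and prove \eqref{dz1}$\Leftrightarrow$\eqref{dz3} directly via $qT_0^2=-1$, which is the same computation in slightly different clothing. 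Your closing paragraph about discriminants is unnecessary (the paper already records that each of $\zeta_F$ and $L(\chi_{F'},s)$ has a single pair of zeros of order $1$, so the only way $\zeta_{F'}$ acquires order $2$ is by coincidence of the two pairs), and you may drop it without loss.
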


\begin{proof}
 Let $s$ be a zero of $\zeta_F$.
 Then \eqref{dz1} holds if and only if $1-s\equiv s+\frac{\pi\i}{\ln q}\pmod{\frac{2\pi\i}{\ln q}\,\ZZ}$,
 which is equivalent to \eqref{dz2}.

 Put $s=\frac 12+\frac{\pi\i}{2\ln q}$
 and $T=q^{-s}=\i q^{-1/2}$. Then
 $$ \zeta_F(s) \ = \ \frac{q\i^2q^{-1}+\bigl(h-(q+1)\bigr)\,\i q ^{-1/2}+1}{(1-\i q^{-1/2})(1-\i q^{1/2})}
      \ = \ \bigl(h-(q+1)\bigr)\,\underbrace{\frac{\i q ^{-1/2}}{(1-\i q^{-1/2})(1-\i q^{1/2})}}\limits_{\neq0} $$
 is zero if and only if $h=q+1$, hence the equivalence of \eqref{dz2} and \eqref{dz3}.
\end{proof}

\begin{pg} \label{intro_eisenstein}
 We proceed with determining the space $\cA_\tor(F')$. We give a brief description of (residual) Eisenstein series and their derivatives. An \emph{unramified Hecke character} is a character on the divisor class group $\Cl X$. In particular, the principal characters $\norm D^s=q^{-s\deg D}$ are unramified Hecke characters. Let $\chi$ be an unramified Hecke character. Define $\lambda_x(\chi)=q_x^{1/2}\bigl(\chi^{-1}(x) + \chi(x)\bigr)$ for all $x\in\norm X$ and all unramified Hecke characters $\chi$ where we consider $x$ as a prime divisor and $q_x=q^{\deg x}$. The automorphic form $E(\blanc,\chi)$ is characterised, up to scalar multiple, by the eigenvalue equations
 $$ \Phi_x \bigl(E(\blanc,\chi)\bigr) \quad = \quad \lambda_x(\chi) \ E(\blanc,\chi) $$
 for all $x\in\norm X$ (see \cite[pars.\ 3.5 and 9.2]{Lorscheid1} for more details). If $\chi\neq\norm\ ^{\pm1}$, then $E(\blanc,\chi)$ is called a \emph{Eisenstein series} and if $\chi=\norm\ ^{\pm1}$, then $R(\blanc,\chi)=E(\blanc,\chi)$ is called a \emph{residual Eisenstein series} or a \emph{residuum of an Eisenstein series}. The functional equation for Eisenstein series is a linear relation between $E(\blanc,\chi)$ and $E(\blanc,\chi^{-1})$. Indeed, $\lambda_x(\chi)=\lambda_x(\chi^{-1})$ for all $x\in\norm X$.

 Define $\lambda_x^-(\chi)=q_x^{1/2}\bigl(\chi^{-1}(x) - \chi(x)\bigr)$ for all $x\in\norm X$ and all unramified Hecke characters $\chi$. If $\chi^2\neq1$ where $1=\norm\ ^0$ is the trivial character, then there is a unique automorphic form $E^{(1)}(\blanc,\chi)$ that satisfies the generalised eigenvalue equations
 $$ \Phi_x\bigl(E^{(1)}(\blanc,\chi)\bigr)\quad=\quad\lambda_x(\chi)\ E^{(1)}(\blanc,\chi)\ + \ \ln q_x\ \lambda_x^-(\chi) \ E(\blanc,\chi) $$
 for all $x\in\norm X$ (\cite[Lemmas 11.2, 11.3 and 11.7]{Lorscheid1}). The functions $E(\blanc,\chi)$ and $E^{(1)}(\blanc,\chi)$ span a $2$-dimensional $\cH$-invariant subspace of $\cA$ (\cite[Props.\ 11.4 and 11.8]{Lorscheid1}). If $\chi^2=1$, then there is a unique automorphic form $E^{(2)}(\blanc,\chi)$ that satisfies the generalised eigenvalue equations
 $$ \Phi_x\bigl(E^{(2)}(\blanc,\chi)\bigr)\quad=\quad\lambda_x(\chi)\ E^{(2)}(\blanc,\chi)\ + \ (\ln q_x)^2\ \lambda_x(\chi) \ E(\blanc,\chi) $$
 for all $x\in\norm X$ (\cite[Lemmas 11.2 and 11.3]{Lorscheid1}). The functions $E(\blanc,\chi)$ and $E^{(2)}(\blanc,\chi)$ span a $2$-dimensional $\cH$-invariant subspace of $\cA$ (\cite[Prop.\ 11.6]{Lorscheid1}). 

 Let $h_2$ be the number of $2$-torsion elements in the class group. Then we can describe the space of $F'$-toroidal automorphic forms as follows.
\end{pg}

\begin{thm}
 \label{space_of_F'_toroidal_forms}
 Let $s+1/2$ be a zero of $\zeta_F$ and $W$ the set of all unramified Hecke characters $\chi=\omega\norm \ ^{1/2}$ such that $\omega^2=1$, but $\omega\vert_{\Cl^0 X}\neq1$. If $h\neq q+1$, then $\cA_\tor(F')$ is generated by 
 $$ \Bigl\{\,E(\blanc,\norm \ ^s),\,E(\blanc,\norm \ ^{s+\pi\i/\ln q}),\,R(\blanc,\chi)\,\Bigr\}_{\chi\in W} $$
 and if $h=q+1$, then $\cA_\tor(F')$ is generated by 
 $$ \Bigl\{\,E(\blanc,\norm \ ^s),\,E^{(1)}(\blanc,\norm \ ^s), \,R(\blanc,\chi)\,\Bigr\}_{\chi\in W} \;. $$
 In particular, $\dim\cA_\tor(F')=2h_2$.
\end{thm}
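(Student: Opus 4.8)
The plan is to decompose $\cA_\tor(F')$ into its three $\cH$-invariant summands $\cA_{0,\tor}(F')$, $\cE_\tor(F')$ and $\cR_\tor(F')$, handle each separately, and then count dimensions. The cuspidal part is already dealt with: by Theorem~\ref{no_toroidal_cusp_forms_for_genus_1} we have $\cA_{0,\tor}(F')=\{0\}$, so there is nothing to do there. The substance of the argument is therefore to identify which Eisenstein series (and their derivatives) and which residual Eisenstein series are $F'$-toroidal.

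\textbf{Eisenstein series.} First I would recall (from \cite{Lorscheid1}, or the toroidal criterion cited in paragraph~\ref{intro_eisenstein}) that an Eisenstein series $E(\blanc,\chi)$ with $\chi\neq\norm\ ^{\pm1}$ is $F'$-toroidal exactly when a certain product of $L$-values attached to $\chi$ and to the quadratic character $\chi_{F'}$ vanishes — concretely, when $L_F(\chi)\cdot L_F(\chi\chi_{F'})=0$, equivalently when $L_{F'}(\chi\circ\N)=0$ where $\N$ is the norm. Since $\zeta_{F'}=\zeta_F\cdot L(\chi_{F'},\blanc)$ and $\zeta_F$ has a single pair of zeros $\{s',1-s'\}$ of order $1$ with $s'=s+1/2$, the principal characters $\chi$ contributing are $\norm\ ^s$ and, from the $L(\chi_{F'},\blanc)$ factor, $\norm\ ^{s+\pi\i/\ln q}$. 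When $h\neq q+1$ these two are distinct zeros of order $1$ (Lemma~\ref{h=q+1_and_zeta_zeros}), so each contributes a one-dimensional space spanned by $E(\blanc,\norm\ ^s)$ resp.\ $E(\blanc,\norm\ ^{s+\pi\i/\ln q})$. When $h=q+1$, Lemma~\ref{h=q+1_and_zeta_zeros} says $\zeta_{F'}$ has a single pair of zeros of order $2$ located at $\norm\ ^s$ (which then satisfies $\chi\equiv\chi^{-1}$), and the order-$2$ vanishing forces the toroidal condition to be met not only by $E(\blanc,\norm\ ^s)$ but also by its derivative $E^{(1)}(\blanc,\norm\ ^s)$; this again yields a $2$-dimensional contribution. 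In both cases the Eisenstein contribution to $\cA_\tor(F')$ has dimension $2$.

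\textbf{Residual Eisenstein series.} The residual part $\cR_\tor(F')$ is spanned by those $R(\blanc,\chi)=E(\blanc,\chi)$ with $\chi=\norm\ ^{\pm1}$, or more precisely — once one unwinds which residua are automorphic in the genus-$1$ situation — by residua attached to characters $\omega\norm\ ^{1/2}$ with $\omega^2=1$. The $F'$-toroidality of such an $R(\blanc,\chi)$ should reduce, via the same $L$-value criterion, to the non-vanishing/vanishing of the relevant $\zeta$ or $L$ factor evaluated at the central point, together with the requirement that $\chi$ restricted to $\Cl^0X$ be non-trivial (the trivial restriction gives the constant function, which is \emph{not} $F'$-toroidal because equation \eqref{T} fails for it — $f(c_0)+\sum f(t_D)=1+r'\neq0$). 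This singles out precisely the set $W$ of characters $\chi=\omega\norm\ ^{1/2}$ with $\omega^2=1$ but $\omega|_{\Cl^0X}\neq1$. The number of quadratic characters $\omega$ of $\Cl X$ that are non-trivial on $\Cl^0X$ is $2h_2-h_2=h_2$ if $h_2$ counts the $2$-torsion — one must check here that distinct $\omega$ in $W$ give linearly independent $R(\blanc,\chi)$ and that $\chi$ and $\chi^{-1}$ coincide on these (since $\omega^2=1$), so $|W|=h_2$. Hence $\dim\cR_\tor(F')=h_2$.

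\textbf{Assembly and the main obstacle.} Adding up: $\dim\cA_\tor(F')=0+2+(h_2-1)+\dots$ — here one must be careful with overlaps and with whether $\norm\ ^{s}$ or its residual cousins are double-counted. The target is $2h_2$, so the bookkeeping must produce exactly $2h_2$; I expect the honest count to run: $h_2-1$ independent residua of the form $R(\blanc,\omega\norm\ ^{1/2})$ from the non-trivial quadratic $\omega$ of $\Cl^0X$, plus the two Eisenstein-type generators, plus one more residuum coming from a quadratic character non-trivial on $\Cl X$ but trivial on $\Cl^0X$ being excluded while its ``companion'' is included — the precise combinatorics of $\Cl X[2]$ versus $\Cl^0X[2]$ (which by the Remark after Theorem~\ref{dimension_and_eigenvalues_of_cusp_forms} satisfy $h_2'=h_2$) is where the count is pinned down. \textbf{The hard part} will be exactly this: establishing that the listed generators are \emph{linearly independent} and that they \emph{exhaust} $\cA_\tor(F')$ — i.e.\ proving the $L$-value criterion for $F'$-toroidality in the function-field setting at the level of generality needed (including the derivative case $E^{(1)}$ when $h=q+1$), and then matching $\#W=h_2$ with the dimension count $2h_2$ without off-by-one errors. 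Everything else (the cuspidal vanishing, the zeta-function bookkeeping of Lemma~\ref{h=q+1_and_zeta_zeros}, the exclusion of the constant function via \eqref{T}) is routine given the results already in place.
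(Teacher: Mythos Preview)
Your overall strategy---decompose $\cA_\tor(F')$ into cuspidal, Eisenstein, and residual parts and treat each via the $L$-value criterion from \cite{Lorscheid1}---is exactly the paper's approach, and your handling of the cuspidal and Eisenstein pieces is essentially correct. But there is a concrete error in the residual part that causes your dimension bookkeeping to collapse.

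\textbf{The counting of $W$ is wrong.} You claim $\#W=h_2$, reasoning that ``the number of quadratic characters $\omega$ of $\Cl X$ that are non-trivial on $\Cl^0X$ is $2h_2-h_2=h_2$''. This is not right: since $\Cl X\simeq\Cl^0X\times\ZZ$, the quotient $\Cl X/2\Cl X$ has order $2h_2$, so there are $2h_2$ quadratic characters $\omega$ of $\Cl X$. Among these, exactly \emph{two} are trivial on $\Cl^0X$, namely the trivial character and $\norm\ ^{\pi\i/\ln q}$. Hence $\#W=2h_2-2$, and the dimension count is simply $0+2+(2h_2-2)=2h_2$ with no off-by-one issues and no need for the speculative ``plus one more residuum'' in your assembly paragraph. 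The paper's proof handles $\cR_\tor(F')$ by a direct citation of \cite[Thm.~7.6]{Lorscheid1} rather than via equation~\eqref{T}; your ad hoc argument that the constant function fails \eqref{T} is suggestive but does not by itself identify $W$.

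\textbf{A minor slip in the $h=q+1$ case.} You assert that when $h=q+1$ the character $\chi=\norm\ ^s$ satisfies $\chi\equiv\chi^{-1}$. It does not: with $s=\tfrac{\pi\i}{2\ln q}$ one has $\chi^2=\norm\ ^{\pi\i/\ln q}=\chi_{F'}\neq1$, so $\chi$ has order $4$. This is precisely why $E^{(1)}(\blanc,\norm\ ^s)$ (governed by $\chi^2\neq1$) is the correct companion, rather than $E^{(2)}$; the paper makes this point explicitly. You land on the right generator by accident.

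Once the cardinality of $W$ is fixed, everything you flagged as ``the hard part''---linear independence and exhaustion---is handled by the cited theorems in \cite{Lorscheid1}, and the proof is complete.
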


\begin{proof}
 Since $\cA_\tor(F')=\cA_{0,\tor}(F')\oplus\cE_\tor(F')\oplus\cR_\tor(F')$, we can investigate the three summands separately. By Theorem \ref{no_toroidal_cusp_forms_for_genus_1}, we have $\cA_{0,\tor}(F')=0$. By \cite[Thm.\ 7.6]{Lorscheid1}, we have $\cR_\tor(F')=\{R(\blanc,\chi)\}_{\chi\in W}$. By definition, $\chi=\omega\norm \ ^{1/2}\in W$ if and only if $\omega$ is an unramified Hecke character that factors through $\rquot{\Cl X}{2\Cl X}$ and that is nontrivial if restricted to $\Cl^0F$. The group $\rquot{\Cl X}{2\Cl X}$ is a group of order $2h_2$, and its character group is of the same order. There are two quadratic characters such that $\omega\vert_{\Cl^0 X}$ is trivial, namely, the trivial character and $\norm \ ^{\pi\i/\ln q}$. Consequently, the cardinality of $W$ is $2h_2-2$.
 
 By \cite[Thm.\ 4.3]{Lorscheid1}, an Eisenstein series $E(\blanc,\chi)$ is $F'$-toroidal if and only if $L(\chi,s)L(\chi\chi_{F'},s)$ vanishes in $s=1/2$. Since this happens only for principal characters $\chi=\norm\ ^s$, we may reformulate this condition as follows: $E(\blanc,\norm\ ^s)$ is $F'$-toroidal if and only if $\zeta_F(1/2+s)L(\chi_{F'},1/2+s)=0$. If $h\neq q+1$, then $\zeta_F(1/2+s)L(\chi_{F'},1/2+s)$ has two pairs of zeros of order $1$ by Lemma \ref{h=q+1_and_zeta_zeros}. By \cite[Thm.\ 6.2]{Lorscheid1}, the two Eisenstein series in the theorem are $F'$-toroidal and by \cite[Thm.\ 4.3 (ii)]{Lorscheid1}, $\cE_\tor(F')$ is generated by these two linear independent functions. If $h=q+1$, then $\{1/2+\frac{\pi\i}{2\ln q},1/2-\frac{\pi\i}{2\ln q}\}$ is a pair of zeros of order $2$ by Lemma \ref{h=q+1_and_zeta_zeros}. Note that $\chi=\norm\ ^{\frac{\pi\i}{2\ln q}}$ is not of order $2$, but of order $4$. Thus by \cite[Thms.\ 4.3 and 6.2]{Lorscheid1}, $E(\blanc,\norm \ ^s)$ and $E^{(1)}(\blanc,\norm \ ^s)$ form a basis of $\cE_\tor(F')$ where $s=\frac{\pi\i}{2\ln q}$ is a zero of $\zeta_F(1/2+s)$.

 The dimension $\cA_\tor(F')$ is consequently $0+2+(2h_2-2)=2h_2$, which completes the proof.
\end{proof}


\begin{rem}
 An alternative proof of the above theorem can be accomplished by considering eigenvalue equations, similar to the proof of Theorem \ref{dimension_and_eigenvalues_of_cusp_forms}. This is done in \cite[section 8.4]{Lorscheid-thesis}. The interesting aspect of the calculations with the eigenvalue equations is that one obtains the equality $\lambda_x^2=(q+1-h)^2$ for the $\Phi_x$-eigenvalues $\lambda_x$ of the Eisenstein series $E(\blanc,\chi)$ in $\cA_\tor(F')$ if $x$ is of degree $1$. The estimation $0<h<2q+2$ coming from the embedding of $X$ into $\PP^2$ yields that $\cA_\tor(F')$ is unitarizable. To prove that $\cA_\tor(F')$ is a tempered representation, which would imply the Riemann hypothesis for $X$ (cf.\ \cite[Thm.\ 9.4]{Lorscheid1}), one needs the estimate $q+1-2q^{1/2}\leq h\leq q+1+2q^{1/2}$. For more details, cf.\  \cite[par. 8.4.7]{Lorscheid-thesis}.

 The dependence of $\lambda_x$ on $h$ should come as no surprise, since the class number $h$ has an important influence on the shape of the graphs $\cG_x$. For a given elliptic curve with known class number, it is however possible to proof the Riemann hypothesis via these methods. Note that in this proof, we do not make use of the fact that the zeta function is a rational function. This leaves some hope that these methods can say something about zeta functions of number fields.
\end{rem}

\begin{pg}\label{explanations_on_toroidal}
 In the rest of this paper, we will determine the space of toroidal automorphic forms for an elliptic curve $X$. This will be done based on theorems of \cite{Lorscheid1} and calculations with $L$-series. 

 There is, more general, a definition of an $E$-toroidal automorphic form for every separable quadratic algebra extension of $F$, which is either a separable quadratic field extension of $F$ or isomorphic to $F\oplus F$. If $\cA_\tor(E)$ denotes the space of (unramified) $E$-toroidal automorphic forms, then the space of \emph{(unramified) toroidal automorphic forms} is the intersection $\cA_\tor=\bigcap\cA_\tor(E)$ where $E$ ranges over all separable quadratic algebra extensions of $F$. We shall not recall these definitions, which can be found in \cite[section 2]{Lorscheid1}.

 Important for the following conclusions is the connection of $E$-toroidal automorphic Eisenstein series and zeros of $L$-series. Namely, let $E$ be a separable quadratic algebra extension of $F$. Let $\chi_E$ be the Hecke character that is associated to $E$ by class field theory. In particular, $\chi_E$ is the trivial character for $E\simeq F\oplus F$ and $\chi_{F'}=\norm \ ^{\pi\i/\ln q}$ for the function field $F'$ of $X'$. Then an Eisenstein series $E(\blanc,\chi)$ is $E$-toroidal if and only if $L(\chi,1/2)L(\chi\chi_E,1/2)=0$ (cf.\ \cite[Cors. 4.4 and 5.6]{Lorscheid1}).  For zeros of higher order, also derivatives of Eisenstein series are toroidal. The precise statement can be found in \cite[Thm. 6.2]{Lorscheid1}. Let $\cE_\tor(E)$ denote the space that is generated by all $E$-toroidal derivatives of Eisenstein series.

 We can draw some first conclusions. Since $\cA_\tor\subset\cA_\tor(F')$, there are no toroidal automorphic cusp forms. By \cite[Thm.\ 7.7]{Lorscheid1}, there are no toroidal residues of Eisenstein series. So we are left to determine the space of toroidal (derivatives of) Eisenstein series, which contains at most the $2$-dimensional space $\cE_\tor(F')$ as determined in Theorem \ref{space_of_F'_toroidal_forms} and at least the Eisenstein series $E(\chi,1/2)$ for which $L(\chi,1/2)=0$, which span an $1$-dimensional subspace of $\cE_\tor(F')$. We will show in the following that $\cA_\tor$ equals this $1$-dimensional subspace when the characteristic of $F$ is odd or $h\neq q+1$. 
\end{pg}

\begin{prop}
 \label{toroidal_space_genus_1_split_torus}
 Let $s+1/2$ be a zero of $\zeta_F$. Then $\cE_\tor(F\oplus F)$ is $2$-dimensional. If $s\nequiv0 \pmod{\frac{\pi\i}{\ln q}\ZZ}$, then it is generated by 
 $$ \bigl\{\,E(\blanc,\norm \ ^s),\,E^{(1)}(\blanc,\norm \ ^s)\,\bigr\}\;, $$ 
 and if $s\equiv0 \pmod{\frac{\pi\i}{\ln q}\ZZ}$, then it is generated by 
 $$ \bigl\{\,E(\blanc,\norm \ ^s),\,E^{(2)}(\blanc,\norm \ ^s)\,\bigr\}\;. $$
\end{prop}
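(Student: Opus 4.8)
The plan is to apply the criterion of \cite[Thms.\ 4.3 and 6.2]{Lorscheid1}, which ties $E$-toroidality of (derivatives of) Eisenstein series to the vanishing of $L$-series, to the special case $E=F\oplus F$. Here the associated Hecke character $\chi_{F\oplus F}$ is trivial, so the product $L(\chi,s)L(\chi\chi_{F\oplus F},s)$ degenerates to $L(\chi,s)^2$; thus $E(\blanc,\chi)$ is $(F\oplus F)$-toroidal if and only if $L(\chi,1/2)=0$, and more generally the derivative $E^{(n)}(\blanc,\chi)$ becomes $(F\oplus F)$-toroidal precisely when $L(\chi,s)^2$ has a zero of order $\geq n+1$ at $s=1/2$, by \cite[Thm.\ 6.2]{Lorscheid1}.

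First I would narrow down the relevant characters. Since $X$ has genus $1$, for every non-principal unramified Hecke character $\chi$ the $L$-polynomial $L(\chi,T)$ has degree $2g-2=0$, hence equals the constant $1$ (it takes the value $1$ at $T=0$) and never vanishes. So only principal characters $\chi=\norm \ ^{s_0}$ can contribute, and for these $L(\norm \ ^{s_0},s)=\zeta_F(s+s_0)$. Therefore $L(\chi,1/2)=0$ exactly when $\zeta_F(s_0+1/2)=0$, i.e.\ when $s_0\equiv\pm s\pmod{\frac{2\pi\i}{\ln q}\ZZ}$, where $s+1/2$ is the given zero of $\zeta_F$. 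As $\lambda_x(\norm \ ^{s_0})=\lambda_x(\norm \ ^{-s_0})$ for all $x\in\norm X$, the characters $\norm \ ^{s}$ and $\norm \ ^{-s}$ determine the same Eisenstein series; so up to scalar $E(\blanc,\norm \ ^{s})$ is the only $(F\oplus F)$-toroidal Eisenstein series, and $\cE_\tor(F\oplus F)$ is spanned by $E(\blanc,\norm \ ^{s})$ together with those of its derivatives that are $(F\oplus F)$-toroidal.

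Next I would compute the order of vanishing of $L(\norm \ ^{s},s')^2=\zeta_F(s+s')^2$ at $s'=1/2$, which is twice the order of vanishing of $\zeta_F$ at $s+1/2$, and split into the two cases of the proposition. Recall from the discussion preceding Lemma \ref{h=q+1_and_zeta_zeros} that $\zeta_F$ has exactly one pair of zeros, of order $1$. If $s\nequiv0\pmod{\frac{\pi\i}{\ln q}\ZZ}$, then $2s\nequiv0\pmod{\frac{2\pi\i}{\ln q}\ZZ}$, i.e.\ $(s+1/2)\nequiv1-(s+1/2)$, so $s+1/2$ is a simple zero of $\zeta_F$ and $L(\norm \ ^{s},\blanc)^2$ vanishes to order exactly $2$ at $1/2$; moreover $(\norm \ ^{s})^2=\norm \ ^{2s}\neq1$. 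By \cite[Thms.\ 4.3 and 6.2]{Lorscheid1}, $E(\blanc,\norm \ ^{s})$ and $E^{(1)}(\blanc,\norm \ ^{s})$ are $(F\oplus F)$-toroidal, no higher derivative is (the order $2$ being too small), and these two linearly independent functions generate $\cE_\tor(F\oplus F)$, which is thus $2$-dimensional. If instead $s\equiv0\pmod{\frac{\pi\i}{\ln q}\ZZ}$, then $(\norm \ ^{s})^2=1$ and $s+1/2$ is a self-paired zero, hence of order $2$ by the convention recalled before Lemma \ref{h=q+1_and_zeta_zeros}, so $L(\norm \ ^{s},\blanc)^2$ vanishes to order $4$ at $1/2$; by \cite[Thms.\ 4.3 and 6.2]{Lorscheid1} both $E(\blanc,\norm \ ^{s})$ and $E^{(2)}(\blanc,\norm \ ^{s})$ are $(F\oplus F)$-toroidal. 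Since for a character $\chi$ with $\chi^2=1$ the span of $E(\blanc,\chi)$ together with all of its derivatives is already the $2$-dimensional space $\Span\{E(\blanc,\chi),E^{(2)}(\blanc,\chi)\}$ by paragraph \ref{intro_eisenstein}, this space equals $\cE_\tor(F\oplus F)$, again $2$-dimensional.

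The step I expect to be the main obstacle is the careful bookkeeping with \cite[Thm.\ 6.2]{Lorscheid1}: matching the order of vanishing of the $L$-product at $1/2$ with exactly which derivatives $E^{(n)}(\blanc,\chi)$ are toroidal, and, in the self-dual case, arguing that the order-$4$ vanishing does not produce extra dimensions—because the only derivative objects attached to a self-dual character are $E(\blanc,\chi)$ and $E^{(2)}(\blanc,\chi)$. A secondary point to get right is the genus-$1$ fact $L(\chi,\blanc)\equiv1$ for non-principal unramified $\chi$, which is what rules out every character except $\norm \ ^{\pm s}$.
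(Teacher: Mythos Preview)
Your approach is essentially the paper's: reduce to principal characters via the criterion $L(\chi,1/2)L(\chi\chi_{F\oplus F},1/2)=L(\chi,1/2)^2=0$, then count orders of vanishing and invoke \cite[Thm.~6.2]{Lorscheid1}. Your explicit observation that $L(\chi,\blanc)\equiv1$ for non-principal unramified $\chi$ in genus $1$ is a nice justification the paper leaves implicit.

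There is one genuine gap. After identifying $E(\blanc,\norm\ ^{s})=E(\blanc,\norm\ ^{-s})$ via the functional equation, you assert that $\cE_\tor(F\oplus F)$ is spanned by $E(\blanc,\norm\ ^{s})$ and \emph{its} toroidal derivatives. But \cite[Thm.~6.2]{Lorscheid1} a priori produces a spanning set that also contains the derivatives $E^{(1)}(\blanc,\norm\ ^{-s})$ (in the first case) or $E^{(1)}(\blanc,\norm\ ^{s})$ and $E^{(3)}(\blanc,\norm\ ^{s})$ (in the self-dual case), and the functional equation for $E(\blanc,\chi)$ alone does not immediately tell you these lie in your claimed $2$-dimensional span. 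Paragraph~\ref{intro_eisenstein} does not settle this either: it only records that $\{E,E^{(1)}\}$ (for $\chi^2\neq1$) or $\{E,E^{(2)}\}$ (for $\chi^2=1$) span $2$-dimensional $\cH$-invariant subspaces, not that the full tower of derivatives collapses into them. The paper closes this gap by invoking \cite[Thm.~11.10]{Lorscheid1}, which gives exactly the needed linear relations among derivatives at $\chi$ and $\chi^{-1}$; you should cite it at the same two places.
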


\begin{proof}
 The Eisenstein series $E(\blanc,\chi)$ is $F\oplus F$-toroidal if and only if $L(\chi,1/2)\cdot L(\chi,1/2)=0$. The only pair of zeros of $\bigl(L(\chi,1/2)\bigr)^2$ is $\{\norm \ ^s,\norm \ ^{-s}\}$ and it is of order $2$. Fix $\chi=\norm \ ^s$ such that $L(\chi,1/2)=\zeta_F(s+1/2)=0$. 
 
 If $s\nequiv0 \pmod{\frac{\pi\i}{\ln q}\ZZ}$, which is the case when $\norm\ ^s\neq\norm\ ^{-s}$, then $\bigl(\zeta_F(s+1/2)\bigr)^2$ vanishes in $s$ to order $2$. In this case, the space of $F\oplus F$-toroidal derivatives of Eisenstein series is spanned by $E(\blanc,\norm\ ^s)$, $E^{(1)}(\blanc,\norm\ ^s)$, $E(\blanc,\norm\ ^{-s})$ and $E^{(1)}(\blanc,\norm\ ^{-s})$ (cf.\ \cite[Thm. 6.2]{Lorscheid1}). By \cite[Thm.\ 11.10]{Lorscheid1}, $E(\blanc,\norm\ ^s)$ and $E^{(1)}(\blanc,\norm\ ^s)$ form a basis for this space.

 If $s\equiv0 \pmod{\frac{\pi\i}{\ln q}\ZZ}$, which is the case when $\norm\ ^s=\norm\ ^{-s}$, then $\bigl(\zeta_F(s+1/2)\bigr)^2$ vanishes in $s$ to order $4$. In this case, the space of $F\oplus F$-toroidal derivatives of Eisenstein series is spanned by $E(\blanc,\norm\ ^s)$, $E^{(1)}(\blanc,\norm\ ^s)$, $E^{(2)}(\blanc,\norm\ ^s)$ and $E^{(3)}(\blanc,\norm\ ^s)$ (cf.\ \cite[Thm. 6.2]{Lorscheid1}). By \cite[Thm.\ 11.10]{Lorscheid1}, $E(\blanc,\norm\ ^s)$ and $E^{(2)}(\blanc,\norm\ ^s)$ form a basis for this space. This completes the proof.
\end{proof}

%
%
 
\begin{prop}
 \label{toroidal_space_genus_1_h_even}
 Assume that $F$ has a separable geometric quadratic unramified field extension $E$. Let $\chi_E$ be the corresponding Hecke character. Let $s+1/2$ be a zero of $\zeta_F$. Then $\cE_\tor(E)$ is $2$-dimensional and generated by
 $$ \bigl\{\,E(\blanc,\norm \ ^s),\,E(\blanc,\chi_E\norm \ ^s)\,\bigr\} \;. $$
\end{prop}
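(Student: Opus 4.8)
The strategy is to determine, from the zeros of the relevant $L$-series, which Eisenstein series and derivatives of Eisenstein series are $E$-toroidal, and then to reduce the resulting generating system to a basis by means of the functional equation; this follows the pattern of the proof of Proposition~\ref{toroidal_space_genus_1_split_torus}, with the simplification that here $\chi_E$ is nontrivial. Recall from paragraph~\ref{explanations_on_toroidal} and \cite[Cors.\ 4.4 and 5.6]{Lorscheid1} that $E(\blanc,\chi)$ is $E$-toroidal precisely when $L(\chi,1/2)\,L(\chi\chi_E,1/2)=0$, and from \cite[Thm.\ 6.2]{Lorscheid1} that if this product, viewed as a function of the character $\chi$, vanishes to order $n$ at $\chi_0$, then $E(\blanc,\chi_0),\dots,E^{(n-1)}(\blanc,\chi_0)$ are $E$-toroidal; moreover $\cE_\tor(E)$ is generated by all such functions as $\chi_0$ ranges over the unramified Hecke characters. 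Since $E$ is a geometric quadratic extension, it differs from the constant field extension $F'$, and $\chi_{F'}=\norm \ ^{\pi\i/\ln q}$ is the only nontrivial unramified quadratic Hecke character that is trivial on $\Cl^0 X$; hence $\chi_E$ is a quadratic unramified Hecke character with $\chi_E\vert_{\Cl^0 X}\neq1$.

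The crucial genus-one fact is that $L(\omega,s)\equiv1$ for every unramified Hecke character $\omega$ with $\omega\vert_{\Cl^0 X}\neq1$. Indeed, a Riemann--Roch count shows that for every $n\geq 2g-1=1$ the coefficient of $q^{-ns}$ in $L(\omega,s)$ is a scalar multiple of $\sum_{c\in\Cl^n X}\omega(c)=0$, so $L(\omega,s)$ is a polynomial in $q^{-s}$ of degree at most $2g-2=0$ with constant term $\omega(0)=1$. Consequently the only unramified Hecke characters with a vanishing $L$-series are the principal ones $\norm \ ^t$, for which $L(\norm \ ^t,s)=\zeta_F(s+t)$, and $\zeta_F$ has exactly one pair of zeros because the numerator in its expression in paragraph~\ref{intro_eisenstein} has degree $2$ in $T=q^{-s}$. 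Writing a general $\chi$ as $\omega\norm \ ^t$ with $\omega$ of finite order, the vanishing of $L(\chi,1/2)$ forces $\omega\vert_{\Cl^0 X}=1$, hence $\chi$ principal and $\chi\in\{\norm \ ^s,\norm \ ^{-s}\}$ with $s+1/2$ a zero of $\zeta_F$; likewise the vanishing of $L(\chi\chi_E,1/2)$ forces $\chi\chi_E$ principal, hence $\chi\in\{\chi_E\norm \ ^s,\chi_E\norm \ ^{-s}\}$, using $\chi_E^{-1}=\chi_E$. Because $\chi_E\vert_{\Cl^0 X}\neq1$, none of these four characters equals $\norm \ ^{\pm1}$, the sets $\{\norm \ ^{\pm s}\}$ and $\{\chi_E\norm \ ^{\pm s}\}$ are disjoint and each closed under inversion, and $E(\blanc,\norm \ ^s)$, $E(\blanc,\chi_E\norm \ ^s)$ are Eisenstein series lying in different inversion orbits, hence linearly independent. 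Thus $\cE_\tor(E)$ is spanned by these two functions together with whatever derivatives the orders of vanishing force.

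It remains to check that no derivative survives, for which I would split into the two cases of Proposition~\ref{toroidal_space_genus_1_split_torus}. If $s\nequiv0\pmod{\frac{\pi\i}{\ln q}\ZZ}$, then $\zeta_F$ has two distinct simple zeros, so at each $\chi_0\in\{\norm \ ^{\pm s},\chi_E\norm \ ^{\pm s}\}$ the product $L(\chi_0,1/2)L(\chi_0\chi_E,1/2)$ equals $\zeta_F$ at the corresponding simple zero up to the nonvanishing factor $L(\chi_E,\blanc)\equiv1$, hence vanishes to order exactly $1$; furthermore $E(\blanc,\norm \ ^{-s})$ and $E(\blanc,\chi_E\norm \ ^{-s})$ are scalar multiples of $E(\blanc,\norm \ ^s)$ and $E(\blanc,\chi_E\norm \ ^s)$ by the functional equation \cite[Thm.\ 4.3]{Lorscheid1}, so $\cE_\tor(E)$ is $2$-dimensional with the stated basis. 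If $s\equiv0\pmod{\frac{\pi\i}{\ln q}\ZZ}$, then $\norm \ ^s$ and $\chi_E\norm \ ^s$ are self-inverse and $\zeta_F$ has a zero of order $2$ at $1/2+s$, so $E(\blanc,\norm \ ^s)$ and $E^{(1)}(\blanc,\norm \ ^s)$ are $E$-toroidal; but the functional equation for derivatives \cite[Thm.\ 11.10]{Lorscheid1}, applied to the self-inverse character $\norm \ ^s$ exactly as in the proof of Proposition~\ref{toroidal_space_genus_1_split_torus}, exhibits $E^{(1)}(\blanc,\norm \ ^s)$ as a scalar multiple of $E(\blanc,\norm \ ^s)$, and the same reasoning applies to $\chi_E\norm \ ^s$; so once more $\cE_\tor(E)$ is $2$-dimensional with basis $\{E(\blanc,\norm \ ^s),E(\blanc,\chi_E\norm \ ^s)\}$. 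The step I expect to be the main obstacle is this last, degenerate case: one must argue carefully that an order-$2$ zero at a self-inverse character contributes only the Eisenstein series itself and no nonzero derivative---the same subtlety that underlies the exact form of Proposition~\ref{toroidal_space_genus_1_split_torus}.
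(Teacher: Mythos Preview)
Your argument is correct and follows the paper's approach. The paper dispenses with your case split by invoking the notion of a \emph{pair of zeros of order $n$} introduced just before Lemma~\ref{h=q+1_and_zeta_zeros}: since $\chi_E\vert_{\Cl^0 X}\neq1$ forces $\norm\ ^s\neq(\chi_E\norm\ ^s)^{-1}$, the two pairs $\{\norm\ ^{\pm s}\}$ and $\{\chi_E\norm\ ^{\pm s}\}$ are disjoint and each is of order~$1$, and the machinery of \cite[Thms.\ 6.2 and 11.10]{Lorscheid1} is set up precisely so that a pair of order~$1$ contributes a single Eisenstein series and no surviving derivative---so your ``main obstacle'' in the self-inverse case is already absorbed into that convention.
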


\begin{proof}
 As explained above, $E(\blanc,\chi)$ is $E$-toroidal if and only if $\chi$ is a zero of the product $L(\chi,1/2)L(\chi\chi_E,1/2)$. Note that a separable quadratic unramified field extension is geometric if and only if the corresponding Hecke character $\chi_E$ restricts to a non-trivial character on the class group $\Cl^0 X$. Therefore $\norm\ ^s$ cannot be the inverse of $\chi_E\norm\ ^s$. Consequently, the only (unramified) pair of zeros of $L(\chi,1/2)$ is $\{\norm \ ^s,\norm \ ^{-s}\}$ and it is of order $1$. The only pair of zeros of $L(\chi\chi_E,1/2)$ is $\{\chi_E\norm \ ^s,\chi_E^{-1}\norm \ ^{-s}\}$ and it is of order $1$. This proves the proposition.
\end{proof}

\begin{thm}
 \label{thmD}
 Let $F$ be an elliptic function field with class number $h$ and constants $\FF_q$. Let $s+1/2$ be a zero of $\zeta_F$.
 \begin{enumerate}
  \item \label{thmD1} If either the characteristic of $F$ is odd or $h\neq q+1$, then $\cA_\tor$ is $1$-dimensional and spanned by the Eisenstein series $E(\blanc,\norm \ ^s)$.
 \item\label{thmD2} If the characteristic of $F$ is $2$ and $h=q+1$, then $\cA_\tor$ is either $1$-dimensional and spanned by $E(\blanc,\norm \ ^s)$ or $2$-dimensional and spanned by $\bigr\{E(\blanc,\norm \ ^s),\,E^{(1)}(\blanc,\norm \ ^s)\bigl\}$.
 \end{enumerate}
\end{thm}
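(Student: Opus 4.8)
The plan is to compute $\cA_\tor$ by intersecting the spaces $\cA_\tor(E)$ over all separable quadratic algebra extensions $E$ of $F$, using the three inputs already assembled: $\cA_{0,\tor}=0$ (Theorem \ref{no_toroidal_cusp_forms_for_genus_1}), the absence of toroidal residues of Eisenstein series (\cite[Thm.\ 7.7]{Lorscheid1}), and the characterisation of $E$-toroidal (derivatives of) Eisenstein series in terms of zeros of $L(\chi,1/2)L(\chi\chi_E,1/2)$ (\cite[Thm.\ 6.2]{Lorscheid1}). So $\cA_\tor=\bigcap_E\cE_\tor(E)$, and it suffices to intersect finitely many of these spaces. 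Since every $\cE_\tor(E)$ contains the one-dimensional space spanned by $E(\blanc,\norm\ ^s)$ (because $L(\norm\ ^s,1/2)=\zeta_F(s+1/2)=0$ forces $E(\blanc,\norm\ ^s)$ to be $E$-toroidal for every $E$, as $\norm\ ^s\cdot\chi_E$ still has $L$-value irrelevant to this), the inclusion $\supseteq$ of the $1$-dimensional claim is immediate; the work is the reverse inclusion.

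First I would take $E=F\oplus F$: by Proposition \ref{toroidal_space_genus_1_split_torus}, $\cE_\tor(F\oplus F)$ is the $2$-dimensional space spanned by $E(\blanc,\norm\ ^s)$ together with $E^{(1)}(\blanc,\norm\ ^s)$ (if $s\nequiv 0$) or $E^{(2)}(\blanc,\norm\ ^s)$ (if $s\equiv 0\pmod{\frac{\pi\i}{\ln q}\ZZ}$). By Lemma \ref{h=q+1_and_zeta_zeros} the case $s\equiv0\pmod{\frac{\pi\i}{\ln q}\ZZ}$ means $\zeta_F$ has a double zero at $s+1/2=1/2$, i.e. $h-(q+1)=0$, which cannot happen: setting $T=q^{-1/2}$ in the numerator $qT^2+(h-(q+1))T+1$ gives $1+(h-(q+1))q^{-1/2}+1=2+(h-(q+1))q^{-1/2}$, so $h-(q+1)=0$ would make the value $2\neq0$; hence always $s\nequiv0$ and $\cE_\tor(F\oplus F)=\langle E(\blanc,\norm\ ^s),E^{(1)}(\blanc,\norm\ ^s)\rangle$. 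Thus $\cA_\tor\subseteq\langle E(\blanc,\norm\ ^s),E^{(1)}(\blanc,\norm\ ^s)\rangle$, already giving the weak bound $\dim\cA_\tor\leq2$ and the identification of the ambient $2$-space.

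Next I would use a geometric quadratic extension $E'$ to cut the dimension down to $1$ whenever one is available. If $h$ is not forced to make $X'$ the only geometric unramified quadratic extension — more precisely whenever there exists a separable geometric unramified quadratic field extension $E'$ of $F$ other than $F'$, equivalently whenever $\Cl^0 X$ has a nontrivial character of order $2$ distinct from the one cutting out $F'$ — Proposition \ref{toroidal_space_genus_1_h_even} gives $\cE_\tor(E')=\langle E(\blanc,\norm\ ^s),E(\blanc,\chi_{E'}\norm\ ^s)\rangle$; intersecting with $\cE_\tor(F')=\langle E(\blanc,\norm\ ^s),E(\blanc,\norm\ ^{s+\pi\i/\ln q})\rangle$ from Theorem \ref{space_of_F'_toroidal_forms} and with the span from $F\oplus F$ forces $\cA_\tor=\langle E(\blanc,\norm\ ^s)\rangle$, since the three $2$-spaces share only the line through $E(\blanc,\norm\ ^s)$ (the other generators are eigenfunctions for distinct systems of Hecke eigenvalues). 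To handle the remaining cases — when no such $E'$ exists, i.e. when $h_2\leq2$ so that the only geometric quadratic unramified extension is $F'$ itself — I would instead use a geometric \emph{ramified} quadratic extension, or a non-geometric one, invoking once more the $L$-series criterion \cite[Cors.\ 4.4 and 5.6]{Lorscheid1}: for a suitable such $E$ the relevant product $L(\chi,1/2)L(\chi\chi_E,1/2)$ has only the simple pair of zeros at $\norm\ ^{\pm s}$, so $\cE_\tor(E)$ is again spanned by $E(\blanc,\norm\ ^s)$ and one independent second function, and intersecting kills $E^{(1)}(\blanc,\norm\ ^s)$. This works as long as one can exhibit, for given $F$, a quadratic $E$ for which $\chi_E\norm\ ^s$ is not $\equiv\norm\ ^{\pm s}$ and for which no higher-order vanishing occurs; in odd characteristic infinitely many ramified quadratic extensions are available and a counting/degree argument shows a good one exists, whereas in characteristic $2$ with $h=q+1$ the Artin--Schreier constraints may leave every candidate $E$ yielding a second $L$-factor with a zero that coincides with $\norm\ ^{\pm s}$, so the argument cannot eliminate $E^{(1)}(\blanc,\norm\ ^s)$ — this is precisely the exceptional case \eqref{thmD2}.

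The main obstacle is exactly that last point: showing that in the non-exceptional cases there genuinely exists a quadratic algebra extension $E$ with $\cE_\tor(E)$ transverse to the line $\langle E^{(1)}(\blanc,\norm\ ^s)\rangle$ inside the ambient $2$-space, and pinning down why the characteristic-$2$, $h=q+1$ situation is the only one where this may fail. Concretely one must analyse, for each type of $E$ (split, geometric unramified, geometric ramified, constant-field non-geometric), the zero pattern of $L(\norm\ ^s,1/2)L(\norm\ ^s\chi_E,1/2)$ as a function of $T=q^{-s}$, using the functional equation and the fact that $\zeta_F$'s numerator has degree $2$, and verify that the second factor contributes a zero distinct from $\norm\ ^{\pm s}$ except when the combinatorics of $2$-torsion in $\Cl^0 X$ and of wild ramification in characteristic $2$ conspire — which, via the embedding bound $0<h<2q+2$ and Lemma \ref{h=q+1_and_zeta_zeros}, is exactly $\ch F=2$ and $h=q+1$. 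Once that case analysis is in place, assertions \eqref{thmD1} and \eqref{thmD2} follow by assembling the intersections as above.
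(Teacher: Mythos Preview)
Your overall strategy---reduce to $\cA_\tor=\bigcap_E\cE_\tor(E)$ and then intersect a few well-chosen $2$-dimensional spaces---is exactly the paper's, but your case analysis is misaligned with the statement and this produces both an error and a genuine gap.

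The error: your claim that $s\not\equiv 0\pmod{\frac{\pi\i}{\ln q}\ZZ}$ always is false. The condition $s\equiv 0$ means $\norm\ ^{2s}=1$, i.e.\ $q^{-s}=\pm 1$, and does \emph{not} mean $h=q+1$ (that is the condition from Lemma~\ref{h=q+1_and_zeta_zeros} for $s=\frac{\pi\i}{2\ln q}$, a different thing). In fact $s\equiv 0$ occurs precisely when $h=q+1\pm 2\sqrt q$, which does happen for certain curves over square fields. Fortunately this error is harmless for the argument: whether $\cE_\tor(F\oplus F)$ contains $E^{(1)}$ or $E^{(2)}$, the intersection with $\cE_\tor(F')$ behaves the same way.

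The gap: you organise the dichotomy around whether a geometric unramified quadratic extension $E'$ exists (i.e.\ $h$ even versus $h$ odd), and when none exists you fall back on ``a geometric ramified quadratic extension \ldots\ a counting/degree argument shows a good one exists''. That is not a proof, and it is also unnecessary. The paper instead organises the dichotomy around $h\neq q+1$ versus $h=q+1$, and the point you are missing is that when $h\neq q+1$ the two spaces $\cE_\tor(F')$ and $\cE_\tor(F\oplus F)$ already intersect in the single line $\langle E(\blanc,\norm\ ^s)\rangle$: by Theorem~\ref{space_of_F'_toroidal_forms} the former is spanned by two genuine eigenfunctions $E(\blanc,\norm\ ^s)$ and $E(\blanc,\norm\ ^{s+\pi\i/\ln q})$, while the latter contains the non-eigenfunction $E^{(1)}$ (or $E^{(2)}$), so the only common vector is $E(\blanc,\norm\ ^s)$. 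No ramified extension and no geometric $E'$ is needed here. In the remaining case $h=q+1$, the two spaces $\cE_\tor(F')$ and $\cE_\tor(F\oplus F)$ \emph{coincide} (both equal $\langle E(\blanc,\norm\ ^s),E^{(1)}(\blanc,\norm\ ^s)\rangle$), so one more extension is required; if the characteristic is odd then $q+1=h$ is even, a geometric unramified $E'$ exists, and Proposition~\ref{toroidal_space_genus_1_h_even} cuts the intersection down to one dimension. If the characteristic is $2$ then $h=q+1$ is odd, no geometric unramified extension exists, and the paper simply leaves the $2$-dimensional bound as stated in \eqref{thmD2}.

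A small terminological confusion to fix: $F'$ is the constant field extension, hence \emph{not} geometric; geometric unramified quadratic extensions correspond to order-$2$ characters nontrivial on $\Cl^0 X$, and these exist iff $h$ is even.
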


\begin{proof}
 As explained in paragraph \ref{explanations_on_toroidal}, $\cA_\tor$ contains $E(\blanc,\norm\ ^s)$ and is contained in the $2$-dimen\-sional space $\cE_\tor(F')$ generated by $E(\blanc,\norm\ ^s)$ and $E(\blanc,\norm\ ^{s+\pi\i/\ln q})$ if $h\neq q+1$, respective, $E(\blanc,\norm\ ^s)$ and $E^{(1)}(\blanc,\norm \ ^s)$ if $h=q+1$ (as described in Theorem \ref{space_of_F'_toroidal_forms}). It suffices to show that $E(\blanc,\norm\ ^{s+\pi\i/\ln q})$ respective $E^{(1)}(\blanc,\norm \ ^s)$ is not toroidal if the characteristic of $F$ is odd or $h\neq q+1$ to prove the theorem.

 If $h\neq q+1$, then $\zeta_{F'}$ has simple zeros by Lemma \ref{h=q+1_and_zeta_zeros}. Hence the intersection of $\cE_\tor(F')$ (as described in Proposition \ref{space_of_F'_toroidal_forms}) with $\cE_\tor(F\oplus F)$ (as described in Proposition \ref{toroidal_space_genus_1_split_torus}) is $1$-dimensional and spanned by $E(\blanc,\norm \ ^s)$.
 
 If the characteristic is odd, then the result follows from \cite[Thm.\ 8.2]{Lorscheid1}. We can also deduce it easily from the preceding as follows. We can assume that $h=q+1$, so $h$ is even. There is thus a separable geometric quadratic unramified field extension $E/F$ that corresponds to a non-trivial character $\chi_E$ on the class group $\Cl^0 X$. Thus the intersection of $\cA_\tor(F')$ with $\cE_\tor(E)$ (as described in Proposition \ref{toroidal_space_genus_1_h_even}) is $1$-dimensional and spanned by $E(\blanc,\norm \ ^s)$.
\end{proof}

\begin{cor}
 Let $F$ be an elliptic function field with constant field $\FF_q$ and class number $h$. If either the characteristic of $F$ is not $2$ or $h\neq q+1$, then there is for every unramified Hecke character $\chi$ and for every $s\in\CC$ a quadratic character $\omega\in\Xi$ such that $L(\chi\omega,s)\neq0$.\qed
\end{cor}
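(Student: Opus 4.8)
The plan is a proof by contradiction built on the $L$-value criterion for toroidality and Theorem~\ref{thmD}. Suppose there were an unramified Hecke character $\chi$ and an $s\in\CC$ with $L(\chi\omega,s)=0$ for \emph{every} $\omega\in\Xi$; set $\psi=\chi\norm\ ^{s-1/2}$. Since $L(\mu\norm\ ^a,t)=L(\mu,t+a)$, the hypothesis reads $L(\psi\omega,1/2)=0$ for all $\omega\in\Xi$, and $\psi$ is again unramified. Specialising to $\omega=1\in\Xi$ gives $L(\psi,1/2)=0$. The genus enters here: over an elliptic function field the $L$-function of a \emph{geometrically nontrivial} unramified Hecke character $\mu$ (one with $\mu|_{\Cl^0 X}\neq1$) is a polynomial in $q^{-s}$ of degree $2g-2=0$, hence a nonzero constant; so $L(\psi,1/2)=0$ is only possible if $\psi|_{\Cl^0 X}=1$. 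Thus $\psi=\norm\ ^{s_1}$ for a well-defined $s_1\in\CC/\tfrac{2\pi\i}{\ln q}\ZZ$, and $0=L(\psi,1/2)=\zeta_F(s_1+\tfrac12)$, so $s_1+\tfrac12$ is a zero of $\zeta_F$. (This is so far consistent with the setup: for every separable quadratic algebra extension $E/F$ the product $L(\psi,1/2)\,L(\psi\chi_E,1/2)$ vanishes in its first factor, so $E(\blanc,\psi)$ is toroidal by the criterion of paragraph~\ref{explanations_on_toroidal}, and under the present hypothesis Theorem~\ref{thmD} identifies $E(\blanc,\psi)$ with the generator of the $1$-dimensional space $\cA_\tor$; the contradiction will come from the \emph{remaining} characters in $\Xi$.)

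To finish I would split on the parity of the class number $h$. If $h$ is even, then $\Cl^0 X$ has nontrivial $2$-torsion, so $\Xi$ contains a quadratic character $\omega$ with $\omega|_{\Cl^0 X}\neq1$; then $\psi\omega=\norm\ ^{s_1}\omega$ is geometrically nontrivial, whence $L(\psi\omega,1/2)$ is a nonzero constant by the fact recalled above --- contradicting $L(\psi\omega,1/2)=0$. (For $h$ even the corollary therefore holds unconditionally.) If $h$ is odd, then $\Cl X/2\Cl X\cong\ZZ/2\ZZ$, so $\Xi=\{1,\chi_{F'}\}$ with $\chi_{F'}=\norm\ ^{\pi\i/\ln q}$; taking $\omega=\chi_{F'}$ forces $0=L(\psi\chi_{F'},1/2)=\zeta_F(s_1+\tfrac12+\tfrac{\pi\i}{\ln q})$, so $s_1+\tfrac12$ and $s_1+\tfrac12+\tfrac{\pi\i}{\ln q}$ are both zeros of $\zeta_F$. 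Since $\zeta_F$ has only one pair $\{s_0,1-s_0\}$ of zeros modulo $\tfrac{2\pi\i}{\ln q}\ZZ$, a short computation with these two congruences shows that $\tfrac12+\tfrac{\pi\i}{2\ln q}$ must be a zero of $\zeta_F$; by Lemma~\ref{h=q+1_and_zeta_zeros} this is equivalent to $h=q+1$, and with $h$ odd this makes $q$ even, i.e.\ $\ch F=2$ --- contradicting the hypothesis. Hence no pair $(\chi,s)$ with $L(\chi\omega,s)=0$ for all $\omega\in\Xi$ exists, which proves the corollary.

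I expect the substantive ingredients to cause no trouble, since they are all already available: the triviality of $L$-functions of geometrically nontrivial unramified characters in genus $1$, Lemma~\ref{h=q+1_and_zeta_zeros}, and (for the intermediate remark) the toroidality criterion together with Theorem~\ref{thmD}. The only real care will be bookkeeping: translating $L(\chi\omega,s)$ into $L(\psi\omega,1/2)$ via $\psi=\chi\norm\ ^{s-1/2}$, describing the set $\Xi$ of quadratic unramified Hecke characters correctly in the odd and even cases, and carrying out the short zeros-of-$\zeta_F$ computation modulo $\tfrac{2\pi\i}{\ln q}\ZZ$ in the odd-$h$ case. The single place where the hypothesis ``$\ch F\neq2$ or $h\neq q+1$'' is genuinely needed is the final line of that computation.
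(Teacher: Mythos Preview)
Your argument is correct. The paper gives no proof beyond the \qed, treating the statement as an immediate consequence of Theorem~\ref{thmD}: if $L(\psi\omega,1/2)=0$ for every $\omega\in\Xi$ (with $\psi=\chi\norm\ ^{s-1/2}$), then each $E(\blanc,\psi\omega)$ lies in $\cA_\tor$ since the first factor in the toroidality criterion of paragraph~\ref{explanations_on_toroidal} already vanishes, and one then checks that these Eisenstein series span a space of dimension at least $2$, contradicting $\dim\cA_\tor=1$.

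Your route is more direct and essentially bypasses Theorem~\ref{thmD}. For $h$ even you exploit the genus-$1$ fact that $L(\mu,\blanc)\equiv 1$ for a geometrically nontrivial unramified $\mu$, which kills the problem in one line; the paper's implicit argument would instead count the $2h_2\geq 4$ characters in $\Xi$ and observe that the resulting Eisenstein series cannot all collapse under the functional equation. For $h$ odd the two approaches coincide: the only way the two Eisenstein series $E(\blanc,\psi)$ and $E(\blanc,\psi\chi_{F'})$ can be linearly dependent is $\psi^2=\chi_{F'}$, and combined with $\zeta_F(s_1+\tfrac12)=0$ this forces $\tfrac12+\tfrac{\pi\i}{2\ln q}$ to be a zero of $\zeta_F$, hence $h=q+1$ by Lemma~\ref{h=q+1_and_zeta_zeros}, hence $q$ even --- exactly your computation. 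So both proofs rest on the same two ingredients (the degree-$(2g-2)$ shape of nontrivial $L$-functions and Lemma~\ref{h=q+1_and_zeta_zeros}); yours makes them explicit and shows that the corollary does not genuinely require the full machinery of Theorem~\ref{thmD}, while the paper's phrasing packages the conclusion as a dimension count in $\cA_\tor$.
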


\begin{rem}\label{rem_on_class_number_1}
 The proof of the last theorem depends on many results from the theory for toroidal automorphic forms as developed in \cite{Lorscheid2} and \cite{Lorscheid1}. In the particular case that the class number is $1$, however, it is possible to deduce the theorem comparatively quickly by the method described in the introduction (cf.\ \cite{Cornelissen-Lorscheid}).
\end{rem}

\bibliographystyle{plain}

\begin{thebibliography}{10}

\bibitem{Atiyah}
Michael Atiyah.
\newblock Vector bundles over an elliptic curve.
\newblock {\em Proc. London Math. Soc. (3)}, 7:414--452, 1957.

\bibitem{Cornelissen-Lorscheid}
Gunther Cornelissen and Oliver Lorscheid.
\newblock Toroidal automorphic forms for some function fields.
\newblock {\em J. Number Theory}, 129(6):1456--1463, 2009.

\bibitem{Cornelissen-Lorscheid2}
Gunther Cornelissen and Oliver Lorscheid.
\newblock Toroidal automorphic forms, {W}aldspurger periods and double
  {D}irichlet series.
\newblock To appear in a proceedings of the Edinborough conference on
  $L$-series, {\tt arXiv:0906.5284}, 2009.

\bibitem{Gaitsgory}
Dennis Gaitsgory.
\newblock Informal introduction to geometric {L}anglands.
\newblock In {\em An introduction to the Langlands program (Jerusalem, 2001)},
  pages 269--281. Birkh{\"a}user Boston, Boston, MA, 2003.

\bibitem{Harder-Li-Weisinger}
G{\"u}nter Harder, Wen-Ching~Winnie Li, and James~Roger Weisinger.
\newblock Dimensions of spaces of cusp forms over function fields.
\newblock {\em J. Reine Angew. Math.}, 319:73--103, 1980.

\bibitem{Hartshorne}
Robin Hartshorne.
\newblock {\em Algebraic geometry}.
\newblock Springer-Verlag, New York, 1977.
\newblock Graduate Texts in Mathematics, No. 52.

\bibitem{Lorscheid-thesis}
Oliver Lorscheid.
\newblock {\em Toroidal automorphic forms for function fields}.
\newblock PhD thesis, University of Utrecht, 2008.
\newblock available from {\tt http://igitur-archive.library.uu.nl}.

\bibitem{Lorscheid2}
Oliver Lorscheid.
\newblock Graphs of {H}ecke operators.
\newblock Preprint, {\tt arXiv:1012.3513}, 2010.

\bibitem{Lorscheid1}
Oliver Lorscheid.
\newblock Toroidal automorphic forms for function fields.
\newblock Preprint, {\tt arXiv:1012.3223}, 2010.

\bibitem{Lysenko}
Sergey Lysenko.
\newblock Geometric {W}aldspurger periods.
\newblock {\em Compos. Math.}, 144(2):377--438, 2008.

\bibitem{Schleich}
Theodor Schleich.
\newblock {\em Einige {B}emerkungen zur {S}pektralzerlegung der
  {H}ecke-{A}lgebra f{\"u}r die {$PGL\sb{2}$} {\"u}ber
  {F}unktionenk{\"o}rpern}, volume~71 of {\em Bonner Mathematische Schriften}.
\newblock Mathematisches Institut, Universit{\"a}t Bonn, 1974.

\bibitem{Serre}
Jean-Pierre Serre.
\newblock {\em Trees}.
\newblock Springer Monographs in Mathematics. Springer-Verlag, Berlin, 2003.

\bibitem{Takahashi}
Shuzo Takahashi.
\newblock The fundamental domain of the tree of {${\rm GL}(2)$} over the
  function field of an elliptic curve.
\newblock {\em Duke Math. J.}, 72(1):85--97, 1993.

\bibitem{Waldspurger2}
Jean-Loup Waldspurger.
\newblock Sur les valeurs de certaines fonctions {$L$} automorphes en leur
  centre de sym{\'e}trie.
\newblock {\em Compositio Math.}, 54(2):173--242, 1985.

\end{thebibliography}

\end{document}